\newtheorem{theorem}{Theorem}[section]
\newtheorem{lemma}[theorem]{Lemma}
\newtheorem{proposition}[theorem]{Proposition}
\theoremstyle{definition}
\newtheorem{definition}[theorem]{Definition}
\newtheorem{example}[theorem]{Example}
\newtheorem{cor}[theorem]{Corollary}
\theoremstyle{remark}
\newcommand{\relmiddle}[1]{\mathrel{}\middle#1\mathrel{}}
\newcommand{\acknowledgmentname}{\textbf{Acknowledgements. }}
\DeclareMathOperator{\bip}{Bip}
\DeclareMathOperator{\ehr}{Ehr}
\DeclareMathOperator{\T}{Top}
\newcommand{\conv}{\operatorname{Conv}}
\numberwithin{equation}{section}
\title {Interior polynomial for signed bipartite graphs and the HOMFLY polynomial}
\author{Keiju Kato}
\date{\today}
\email{kato.k.at@m.titech.ac.jp}
\address{Department of Mathematics, Tokyo Institute of Technology, Oh-okayama 2-12-1, Meguro-ku, Tokyo 152-8551, Japan}
\begin{document}

\begin{abstract}
The interior polynomial is an invariant of bipartite graphs, and a part of the HOMFLY polynomial of a special alternating link coincides with the interior polynomial of the Seifert graph of the link. We extend the interior polynomial to signed bipartite graphs, and we show that, in the planar case, it is equal to a part of the HOMFLY polynomial of a naturally associated link. Moreover we obtain a part of the HOMFLY polynomial of any oriented link from the interior polynomial of the Seifert graph. We also establish some other, more basic properties of this new notion. This leads to new identities involving the original interior polynomial.
\end{abstract}
\maketitle

\section{Introduction}
In this paper, we compute a part of the HOMFLY polynomial of knots and links using a new kind of graph theory. Among polynomial invariants of knots, the Alexander polynomial, which is defined homologically, has been well known. The HOMFLY polynomial $P_{L}(v,z) \in \mathbb{Z}[v^{\pm 1},z^{\pm 1}]$ is an oriented link invariant defined by $P_{\text{unknot}}(v,z)=1$ and the skein relation $ v^{-1}P_{D_{+}}-vP_{D_{-}}=zP_{D_0} $, where $D_{+}$, $D_{-}$, $D_0$ are a skein triple (see Figure \ref{fig:D}) \cite{homfly}. It specializes to the Alexander polynomial $\Delta(t)$ and the Jones polynomial $V(t)$ via the substitutions $\Delta(t)=P(1,t^{1/2}-t^{-1/2})$ and $V(t)=P(t,t^{1/2}-t^{-1/2})$, respectively. In this sense, the HOMFLY polynomial contains the Alexander polynomial and the Jones polynomial. Jaeger \cite{jaeger} and Traldi \cite{weighted} computed the HOMFLY polynomial by using graph theory but only for links presentable by certain very special diagrams. Jaeger showed that the Tutte polynomial $T(x,y)$ of a planar graph is equivalent to the HOMFLY polynomial of a certain associated link. This link has a diagram in which two crossings correspond to each edge of the graph. Traldi extended the Tutte polynomial to weighted graphs and showed that in the planar case it is equivalent to the HOMFLY polynomial of an associated link, which is similar to Jaeger's but not necessarily alternating.

\begin{figure}[htbp]
\begin{tabular}{ccc}
\begin{minipage}{0.15\hsize}
\centering
\includegraphics[width=1cm]{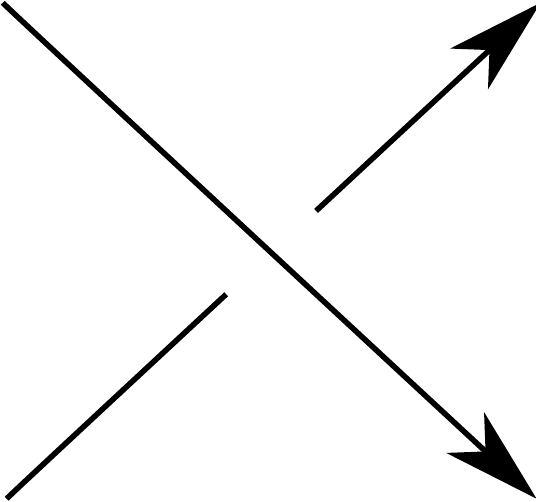}
\\ \vspace{2pt}
$D_{+}$
\end{minipage}
\begin{minipage}{0.15\hsize}
\centering
\includegraphics[width=1cm]{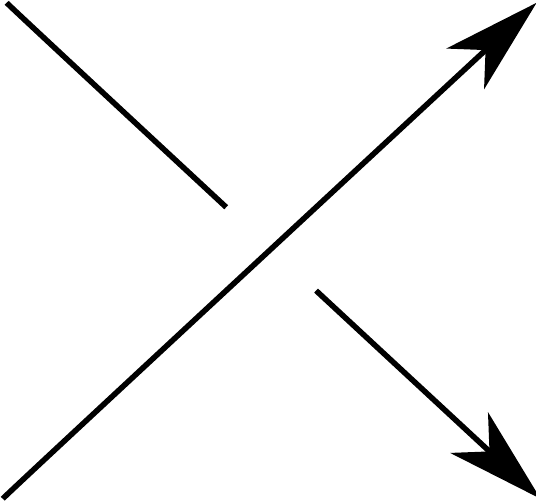}
\\ \vspace{2pt}
$D_{-}$
\end{minipage}
\begin{minipage}{0.15\hsize}
\centering
\includegraphics[width=1cm]{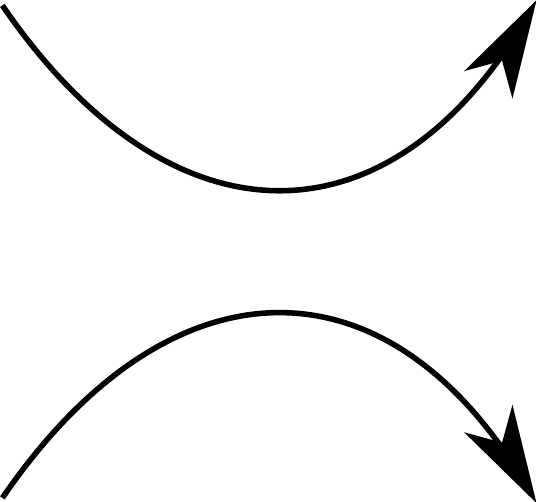}
\\ \vspace{2pt}
$D_{0}$

\end{minipage}
\end{tabular}
\caption{Skein triple.}
\label{fig:D}
\end{figure}

Regarding arbitrary links, Morton \cite{morton} showed that the maximal $z$ exponent in the HOMFLY polynomial of an oriented link diagram $D$ is less than or equal to $c(D)-s(D)+1$, where $c(D)$ is the crossing number of $D$ and $s(D)$ is the number of its Seifert circles. We call the coefficient of $z^{c(D)-s(D)+1}$, which is a polynomial in $v$, the top of the HOMFLY polynomial and denote it by $\T_D(v)$. We note that this depends on $D$ and not just on the link presented by $D$. For any plane graph $G$, Jaeger's theorem implies that $\T_D(v)$ has the same coefficients as $T_G(1/x,1)$. Here $D$ is the diagram defined by Jaeger; see Figure \ref{fig:jaeger} for an example. Our first goal is to use similar ideas to obtain the top of the HOMFLY polynomial but to do it for a special diagram of an arbitrary link. Here ``special'' has its usual meaning: no Seifert circles separates other Seifert circles from each other.

\begin{figure}[htbp]
\centering
\includegraphics[width=2cm]{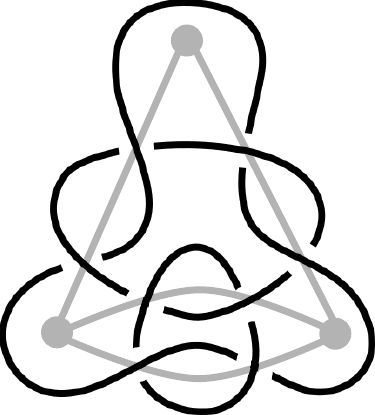}
\caption{Jaeger's link obtained from a plane graph.}
\label{fig:jaeger}
\end{figure}

As a generalization of $T_G(1/x,1)$, K\'alm\'an introduced the interior polynomial \cite{K}. The interior polynomial is a polynomial invariant of hypergraphs or bipartite graphs. Here a hypergraph  $\mathscr{H}=(V,E)$ has a vertex set $V$ and a hyperedge set $E$, where $E$ is a multiset of non-empty subsets of $V$. The interior polynomial is naturally associated to such a structure, but by the main result of \cite{KP}, we may also regard the interior polynomial as an invariant of the natural bipartite graph $\bip\mathscr{H}$ with color classes $E$ and $V$. Moreover, when $G$ is a connected plane bipartite graph, the coefficients of the interior polynomial $I_G$ agree with the coefficients of $\T_{L_G}(v)$, where $L_G$ is the alternating link diagram derived from $G$ by the median construction \cite{KM,KP}. Notice that this time, each edge of $G$ corresponds to just one crossing of $L_G$ and also that $L_G$ is a special alternating diagram with Seifert graph $G$. See Figure \ref{fig:alt} for an example. Therefore we may compute $\T_{L_G}(v)$ of the special alternating diagram $L_G$ by using the interior polynomial. The aim of this paper is to generalize this computation to arbitrary special diagrams.

To this end, we extend the interior polynomial to signed bipartite graphs, that is, bipartite graphs $G$ with a sign $\mathcal{E} \to \{ +1 , -1 \}$, where $\mathcal{E}$ is the set of edges. The signed interior polynomial $I^+_G$ is constructed as an alternating sum of the interior polynomials of the bipartite graphs obtained from $G$ by deleting some negative edges and forgetting the sign. Since the bipartite graph so obtained may be disconnected, we also need to extend the interior polynomial to disconnected bipartite graphs. We build this theory without assuming that $G$ is planar.

Then when $G$ is embedded in the plane, we show that $I^+_G$ agrees with the top of the HOMFLY polynomial of the link diagram $L_G$ obtained from $G$ by replacing positive and negative edges by positive and negative crossings, respectively, see Theorem \ref{thm:main}. We extend Theorem \ref{thm:main} to any oriented link diagram and the Seifert graph. The Seifert graph is the bipartite graph, so we compute the interior polynomial of the Seifert graph. More precisely, the main result of this paper is the following.
\begin{theorem}\label{thm:any}
Let $D$ be an oriented link diagram, and the $G=(V,E,\mathcal{E}_+\cup\mathcal{E}_-)$ be the Seifert graph obtained from $D$, where $E\cup V$ is vertex set which is separated by color and $\mathcal{E}_+\cup \mathcal{E}_-$ is edge set which is separated by sign. Then the top of the HOMFLY polynomial $P_{D}(v,z)$ is equal to
\[
v^{|\mathcal{E}_{+}|-|\mathcal{E}_{-}|-(|E|+|V|)+1}I^{+}_{G}(v^2 ).
\]
\end{theorem}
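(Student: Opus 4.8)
The plan is to deduce Theorem~\ref{thm:any} from Theorem~\ref{thm:main} by proving that the top $\T_D(v)$ depends only on the Seifert graph $G$, and not on the particular diagram realizing it. First I would record that all quantities in the exponent are read off from $G$ alone: the crossing number is $c(D)=|\mathcal{E}_+|+|\mathcal{E}_-|$, the number of Seifert circles is $s(D)=|E|+|V|$, and the writhe is $w(D)=|\mathcal{E}_+|-|\mathcal{E}_-|$. Consequently Morton's bound gives the same top $z$-degree $c(D)-s(D)+1$ for every diagram with Seifert graph $G$, and the prefactor $v^{\,w(D)-s(D)+1}$ is likewise determined by $G$. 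Thus, once $\T_D$ is known to be a function of $G$, comparing $D$ with the special diagram $L_G$ produced by the median construction and invoking Theorem~\ref{thm:main} finishes the proof, the exponents matching automatically.

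To prove this diagram-independence I would run the HOMFLY skein relation as a resolution of crossings along the Seifert smoothing. Writing the relation at a positive crossing as $P_{D_+}=v^{2}P_{D_-}+vz\,P_{D_0}$ and at a negative crossing as $P_{D_-}=v^{-2}P_{D_+}-v^{-1}z\,P_{D_0}$, I would iterate until every crossing is Seifert-smoothed, so that the terminal diagrams are unlinks of Seifert circles with $P=((v^{-1}-v)/z)^{\,n-1}$. In this expansion the smoothing $D_0$ corresponds to deleting the associated edge of $G$ while a sign change corresponds to toggling that edge's sign, so the whole computation becomes a recursion on the signed graph $G$ whose base case is the edgeless graph (the $s(D)$-component unlink). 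Extracting the coefficient of $z^{c(D)-s(D)+1}$ throughout shows that $\T_D(v)$ is computed by a state sum over spanning-tree-like (hypertree) states of $G$, and in particular depends only on $G$; the factors of $v$ accrued from the $v^{\pm 2}$ coefficients and from the unlink normalization collect into $v^{\,w(D)-s(D)+1}$ and produce the substitution $x=v^{2}$.

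It then remains to identify this state sum with $I^{+}_G(v^2)$. Here I would isolate the contribution of the negative edges: iterating the negative-crossing resolution generates an inclusion--exclusion over subsets of $\mathcal{E}_-$, which is exactly the alternating sum of interior polynomials of the graphs obtained from $G$ by deleting some negative edges that defines $I^{+}_G$. The all-positive strata are then controlled by Theorem~\ref{thm:main} together with the K\'alm\'an--Postnikov description of the interior polynomial, including its extension to the disconnected graphs that appear once negative edges are removed.

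The main obstacle will be the degree and sign bookkeeping in the last two steps: one must show that in each stratum only the top $z$-degree survives while all lower-degree contributions cancel, and that the signs produced by the repeated use of the $D_-$ terms agree precisely with the alternating-sum convention defining $I^{+}_G$. Equivalently, in the reduction formulation, the hard part is exhibiting a sequence of diagram moves carrying $D$ to $L_G$ that simultaneously preserve the Seifert graph and the value of $\T$; the skein/state-sum argument above is essentially a way to make that invariance manifest without constructing such moves by hand.
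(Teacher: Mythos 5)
The reduction in your first paragraph is fine as far as it goes: if one knew that $\T_D(v)$ depends only on the signed Seifert graph $G$, then comparing $D$ with the special diagram $L_G$ obtained from a planar embedding of $G$ and invoking Theorem~\ref{thm:main} would finish the proof, and your exponent bookkeeping is correct. The gap is that your proposed mechanism for establishing this diagram-independence does not work as described. The relation $P_{D_+}=v^{2}P_{D_-}+vz\,P_{D_0}$ cannot simply be ``iterated until every crossing is Seifert-smoothed'': the switched branch has the same number of crossings as the original, so the recursion either must be organized toward descending diagrams (whose terminal unlinks, component counts and accrued powers of $v$ depend on the particular diagram, not only on $G$), or must stop at an all-positive diagram. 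In the latter case the base case is a positive but generally non-special diagram, and the assertion that its top coefficient equals $v^{|\mathcal{E}|-s(D)+1}I'_{G}(v^{2})$ is not covered by Theorem~\ref{thm:main}; it is itself an instance of the statement being proved. In short, $D$ and $L_G$ are different diagrams of generally different links, and no argument is actually supplied that transports $\T$ from one to the other; your own closing paragraph concedes that this is ``the hard part,'' but that hard part is the entire content of the theorem beyond Theorem~\ref{thm:main}.

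The missing idea is the one the paper uses: when $D$ has Type II Seifert circles, the Seifert graph has cut vertices, $G$ decomposes as a block sum $G=G_1*\cdots*G_n$, and correspondingly $D$ decomposes as a Murasugi sum ($*$-product) $D=D_1*\cdots*D_n$ with each $D_i=L_{G_i}$ special, so that Theorem~\ref{thm:main} applies to each factor. The two multiplicativity statements $\T_{D*D'}(v)=\T_{D}(v)\T_{D'}(v)$ (Murasugi--Przytycki \cite{MP}) and $I^{+}_{G_1*G_2}(x)=I^{+}_{G_1}(x)I^{+}_{G_2}(x)$ (Theorem~\ref{thm:murasugisum}) then reassemble the pieces, with the exponents adding up as required. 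Your inclusion--exclusion over subsets of $\mathcal{E}_-$ reproduces the proof of Theorem~\ref{thm:main} and is sound in the special case, but it does not substitute for this decomposition step; to repair the proposal you would need to add the block-sum argument (or some genuine replacement for it) rather than refine the skein bookkeeping.
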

On the other hand, there exist signed planar bipartite graphs $G=(V,E,\mathcal{E})$ such that $I^+_G(x)=0$. In such cases, the maximal exponent of $z$ in the HOMFLY polynomial of the link diagram $L_G$ is less than $|\mathcal{E}|-(|E|+|V|)+1$, in other words, Morton's in inequality is not sharp. We also find a sufficient condition for $I^+_G=0$, which works in non-planar cases, too. 
\begin{theorem}\label{thm:0}
If a signed bipartite graph $G$ contains an alternating cycle of positive and negative edges, then $I^+_G(x)=0$.
\end{theorem}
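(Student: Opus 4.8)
The plan is to start from the defining alternating sum
\[
I^+_G=\sum_{S\subseteq\mathcal E_-}(-1)^{|S|}I_{G\setminus S},
\]
and to exploit that deletions of distinct edges commute, so that $I^+_G=\bigl[\prod_{n\in\mathcal E_-}(1-\delta_n)\bigr]I_G$, where $\delta_n$ denotes deletion of the edge $n$ (and signs are forgotten). Writing $N=\{n_1,\dots,n_k\}$ for the negative edges of the alternating cycle and $p_1,\dots,p_k$ for its positive edges, so that the cycle reads $p_1,n_1,p_2,\dots,p_k,n_k$ with $p_i=a_ib_i$ and $n_i=b_ia_{i+1}$ (indices mod $k$, the $a_i,b_i$ lying in the two color classes), I would factor the operator as $\prod_{n\in\mathcal E_-\setminus N}(1-\delta_n)\cdot\prod_{i=1}^k(1-\delta_{n_i})$. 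Since all factors commute, it suffices to prove the \emph{inner} identity $\sum_{T\subseteq N}(-1)^{|T|}I_{H\setminus T}=0$ for every bipartite graph $H$ that still contains the cycle (namely $H=G\setminus S'$ for $S'\subseteq\mathcal E_-\setminus N$); all the $p_i$ survive in $H$ because they are positive.

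Next I would use the description of the interior polynomial as the Ehrhart ($h^{*}$) data of the root polytope $Q_G=\conv\{\,\mathbf e_u+\mathbf e_w : uw\in E(G)\,\}$ (K\'alm\'an--Postnikov \cite{KP}), extended to disconnected graphs as in the body of the paper. The alternating cycle translates into a single affine relation among the corresponding vertices of $Q_H$, the \emph{circuit}
\[
\sum_{i=1}^k(\mathbf e_{a_i}+\mathbf e_{b_i})=\sum_{i=1}^k(\mathbf e_{b_i}+\mathbf e_{a_{i+1}}),
\]
that is $\sum_i x_{p_i}=\sum_i x_{n_i}$, where $x_e$ is the vertex of $Q_H$ coming from the edge $e$. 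Deleting a subset $T\subseteq N$ merely removes the points $\{x_{n_i}:i\in T\}$, so the $Q_{H\setminus T}$ form a nested family shrinking as $T$ grows.

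The heart of the argument is the inclusion--exclusion identity $\sum_{T\subseteq N}(-1)^{|T|}\mathbf 1_{Q_{H\setminus T}}\equiv 0$ of indicator functions. To prove it I would fix a point $x$ and analyse $\mathcal F_x=\{T\subseteq N:x\in Q_{H\setminus T}\}$, which is automatically down-closed. The circuit relation then powers an exchange argument: writing $x$ as a convex combination of the vertices of $Q_H$ and subtracting the appropriate multiple of the circuit, the common value $\tfrac1k\sum_i x_{n_i}=\tfrac1k\sum_i x_{p_i}$ lets me (i) always zero out the coefficient of at least one $x_{n_i}$, so that $\mathcal F_x\neq\{\emptyset\}$ whenever $x\in Q_H$, and (ii) show that if $x$ can be written avoiding the edges of $T$ and also those of $T'$, then it can be written avoiding $T\cup T'$, so that $\mathcal F_x$ is closed under union. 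A down-closed, union-closed, nonempty family is a principal interval $[\emptyset,T_x]$ with $T_x\neq\emptyset$, whence $\sum_{T\subseteq N}(-1)^{|T|}\mathbf 1[x\in Q_{H\setminus T}]=\sum_{T\subseteq T_x}(-1)^{|T|}=0$. Summing over the lattice points of every dilate gives $\sum_{T\subseteq N}(-1)^{|T|}\ehr_{Q_{H\setminus T}}=0$ as an identity of Ehrhart series, and reading off the interior polynomials — using the paper's convention for the disconnected terms, which supplies exactly the normalisation that absorbs the changes of dimension caused by the deletions — yields the inner identity, hence Theorem~\ref{thm:0}.

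I expect the main obstacle to be making the exchange argument of step (ii) robust. When the cycle is not induced, chords of $G$ contribute extra vertices to $Q_H$ and may create affine relations beyond the single circuit; then the representation of $x$ is no longer unique modulo the circuit, and the forcing that produces a common exchange parameter can break down. I would handle this either by first reducing to a minimal alternating cycle and arguing that the extra generators cannot defeat union-closure, or, failing a uniform argument, by replacing the global exchange by one supported locally on the cycle. A secondary technical point is the bookkeeping in passing from the Ehrhart identity to the identity of interior polynomials across terms $H\setminus T$ of differing connectivity; this is precisely where the extension of $I$ to disconnected bipartite graphs, and its behaviour under disjoint union, must be invoked with care.
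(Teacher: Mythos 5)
Your overall architecture coincides with the paper's: the reduction to the case where all negative edges lie on the cycle, the passage to root polytopes and Ehrhart series (with the $(1-x)^{|E|+|V|-1}$ normalisation absorbing the disconnected terms), and your step (i) --- which is exactly the paper's cycle-change argument showing $\bigcup_k Q_{H\setminus\{n_k\}}=Q_H$ --- are all sound. The gap is step (ii): the family $\mathcal F_x=\{T\subseteq N: x\in Q_{H\setminus T}\}$ is \emph{not} union-closed in general, even when the alternating cycle is chordless, so it need not be a Boolean interval. Counterexample: let $E=\{e_1,\dots,e_4\}$, $V=\{v_1,\dots,v_4,w,u\}$, with negative edges $n_i=e_iv_i$ and positive edges $e_iv_{i+1}$ (indices mod $4$) together with $e_1w$, $e_3w$, $e_2u$, $e_4u$; the alternating cycle is $v_1e_1v_2e_2v_3e_3v_4e_4$. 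Let $x\in Q_H$ be the point whose coordinates, in twentieths, are $6,4,6,4$ on $e_1,\dots,e_4$ and $4,1,4,1,6,4$ on $v_1,\dots,v_4,w,u$. The weight system $\nu_{e_1v_2}=1$, $\nu_{e_1w}=5$, $\nu_{e_3v_3}=4$, $\nu_{e_3v_4}=1$, $\nu_{e_3w}=1$, $\nu_{e_2u}=4$, $\nu_{e_4v_1}=4$ (all other edges $0$) represents $x$ and avoids $n_1,n_2,n_4$; its image under the symmetry $1\leftrightarrow3$, $2\leftrightarrow4$ avoids $n_2,n_3,n_4$. Hence $\{n_1,n_2,n_4\}$ and $\{n_2,n_3,n_4\}$ lie in $\mathcal F_x$. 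However, any weight system for $x$ supported on $H\setminus\{n_1,n_3\}$ must place the total weight $12/20$ incident to $e_1$ and $e_3$ on edges joining them to $\{v_2,v_4,w\}$, whose coordinate sum in $x$ is only $8/20$; so $\{n_1,n_3\}\notin\mathcal F_x$. Thus $\mathcal F_x=\{T:\{n_1,n_3\}\not\subseteq T\}$, which is down-closed but not union-closed and is not an interval $[\emptyset,T_x]$. (Its alternating sum still vanishes, since $\sum_{T\supseteq\{n_1,n_3\}}(-1)^{|T|}=0$, so the theorem survives --- but not for your reason, and since $20x$ is a lattice point the failure is visible even at the level of Ehrhart counts.)

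What you actually need is only the identity $\sum_{T\subseteq N}(-1)^{|T|}\left[Q_{H\setminus T}\right]\equiv0$ of indicator functions, which is strictly weaker than every $\mathcal F_x$ being a Boolean interval, and the paper obtains it without analysing $\mathcal F_x$ pointwise. It lifts to the standard simplex $\Delta\subset\mathbb R^{\mathcal E}$, where the sets $H_j=\{\lambda\in\Delta:\lambda_{n_j}=0\}$ are genuine faces and the inclusion--exclusion identity $\left[\bigcup_jH_j\right]=\sum_{\emptyset\neq J}(-1)^{|J|-1}\left[H_J\right]$ is immediate, and then pushes this identity forward along the linear map $T\colon\Delta\to Q_H$ using Barvinok's theorem \cite{B} that linear maps preserve linear relations among indicator functions of polyhedra; the single geometric input beyond that is $T\bigl(\bigcup_jH_j\bigr)=\bigcup_jQ_{H\setminus\{n_j\}}=Q_H$, which is precisely your step (i). So the repair is to replace your union-closure/exchange argument by this valuation-theoretic step (or by any other device that controls only the alternating sum over $\mathcal F_x$ rather than its lattice structure); with that substitution the rest of your outline, including the bookkeeping through $I'$ for disconnected terms, goes through as in the paper.
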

To prove this theorem, we use the root polytope and the Ehrhart polynomial. The root polytope is a convex polytope constructed from a bipartite graph. The Ehrhart polynomial of a polytope counts the number of integer points in dilations of the polytope. Moreover, the Ehrhart polynomial of the root polytope of a connected bipartite graph is equivalent to the interior polynomial \cite{KP}. We compute the signed interior polynomial by the Ehrhart polynomial and get the vanishing formula above.

Another important application of Theorem \ref{thm:0} is that it gives an identity for the original interior polynomial $I_G$. This is one possible counterpart of the deletion-contraction relation of the Tutte polynomial, in that it enables one to compute the interior polynomial recursively.
\begin{cor}\label{cor:recursion}
If an unsigned bipartite graph $G$ contains a cycle $\epsilon_1,\delta_1,\epsilon_2,\delta_2,\cdots,\epsilon_n,\delta_n$, then we have
\[
I'_G(x)=\sum_{\emptyset\neq\mathcal{S}\subset\{\epsilon_1,\epsilon_2,\cdots,\epsilon_n\}}(-1)^{|\mathcal{S}|-1}I'_{G\setminus\mathcal{S}}(x).
\]
Here $I'$ refers to our extension of $I$ to not necessarily connected bipartite graphs.
\end{cor}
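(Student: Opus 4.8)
The plan is to derive this identity as a direct consequence of Theorem \ref{thm:0}, the key move being to \emph{manufacture} a convenient signing of the a priori unsigned graph $G$. Since $G$ carries no signs in the hypothesis, I am free to impose any sign function on its edges, and the point is to choose one that turns the given cycle into an alternating cycle. Concretely, I would introduce a signed bipartite graph $\tilde G$ on the same underlying graph as $G$, declaring the edges $\epsilon_1,\ldots,\epsilon_n$ negative and every other edge (in particular $\delta_1,\ldots,\delta_n$) positive. Reading the signs around the cycle $\epsilon_1,\delta_1,\epsilon_2,\delta_2,\ldots,\epsilon_n,\delta_n$ then gives the pattern $-,+,-,+,\ldots,-,+$, so this is an alternating cycle of positive and negative edges in $\tilde G$.

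With this arranged, Theorem \ref{thm:0} applies verbatim and yields $I^+_{\tilde G}(x)=0$. The next step is to expand the left-hand side via the definition of the signed interior polynomial as an alternating sum of ordinary (possibly disconnected) interior polynomials obtained by deleting subsets of the negative edges. Because the negative edges of $\tilde G$ are exactly $\{\epsilon_1,\ldots,\epsilon_n\}$, and because deleting a subset $\mathcal{S}$ and forgetting all signs produces precisely the unsigned graph $G\setminus\mathcal{S}$, the definition gives
\[
0 = I^+_{\tilde G}(x) = \sum_{\mathcal{S}\subseteq\{\epsilon_1,\ldots,\epsilon_n\}}(-1)^{|\mathcal{S}|}\,I'_{G\setminus\mathcal{S}}(x).
\]
Finally I would split off the term $\mathcal{S}=\emptyset$, which equals $I'_G(x)$, transpose the remaining sum, and multiply through by $-1$; this converts the coefficient $(-1)^{|\mathcal{S}|}$ into $(-1)^{|\mathcal{S}|-1}$ and leaves exactly the asserted recursion, with $\mathcal{S}$ now ranging over the nonempty subsets of $\{\epsilon_1,\ldots,\epsilon_n\}$.

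The argument is short, so the only genuine points needing care are bookkeeping ones, and I do not anticipate a serious obstacle beyond them. First, I must confirm that the sign convention built into the definition of $I^+$ really produces the coefficients $(-1)^{|\mathcal{S}|}$ above rather than, say, $(-1)^{|\mathcal{E}_-|-|\mathcal{S}|}$; should an overall sign appear it must be tracked, but it cannot affect the final identity, since the $\mathcal{S}=\emptyset$ term fixes the normalization. Second, I should verify that the graphs $G\setminus\mathcal{S}$ occurring in the expansion really are the same objects as those on the right-hand side, including those that become disconnected upon deletion. This last point is exactly why the extension $I'$ to disconnected bipartite graphs is required, and it is precisely the device that transports Theorem \ref{thm:0}, a statement about the signed polynomial, back into a statement purely about the unsigned invariant $I'$.
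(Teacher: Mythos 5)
Your proposal is correct and follows exactly the paper's own argument: impose the signing that makes $\epsilon_1,\ldots,\epsilon_n$ negative and all other edges positive, apply Theorem \ref{thm:0} to conclude the signed interior polynomial vanishes, expand the definition of $I^+$ (whose coefficients are indeed $(-1)^{|\mathcal{S}|}$), and isolate the $\mathcal{S}=\emptyset$ term. No issues.
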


\noindent
\acknowledgmentname 
I should like to express my gratitude to associate professor Tam\'as K\'alm\'an for constant encouragement and much helpful advice.

\section{The interior polynomial}
\subsection{Preliminaries}
In this section, we recall some definitions and facts about hypergraphs and the interior polynomial that are contained in \cite{K,KM,KP}. A hypergraph is a pair $\mathscr{H}=(V,E)$, where $V$ is a finite set and $E$ is a finite multiset of non-empty subsets of $V$. We order the set $E$ of hyperedges and define the interior polynomial of $\mathscr{H}$ by the activity relation between hyperedges and so called hypertrees \cite{K}. For the set of hypertrees to be non-empty, here we assume that $\mathscr{H}$ is connected. This means that the graph $\bip\mathscr{H}$, defined below, is connected. The interior polynomial does not depend on the order of the hyperedges. It generalizes the evaluation $x^{|V|-1}T_{G}(1/x,1)$ of the classical Tutte polynomial $T_G(x,y)$ of the graph $G=(V,E)$.

We obtain a bipartite graph from the hypergraph $\mathscr{H}=(V,E)$, by letting an edge of the bipartite graph connect a vertex (i.e., an element of $V$) and a hyperedge if the hyperedge contains the vertex. We denote the bipartite graph obtained from the hypergraph $\mathscr{H}$ by $\bip\mathscr{H}=(V,E,\mathcal{E})$. Thus $V$ and $E$ become the color classes of $\bip\mathscr{H}$; in particular, both play the role of vertices. This method gives a two-to-one correspondence from hypergraphs to bipartite graphs. The two hypergraphs corresponding one bipartite graph are called abstract dual. We will denote by $\overline{\mathscr{H}}=(E,V)$ the abstract dual hypergraph of $\mathscr{H}=(V,E)$. Whenever one bipartite graph generates two hypergraphs in this way, the interior polynomials of them are the same \cite{KP}. Therefore we regard the interior polynomial as an invariant of bipartite graphs.

The abstract theory outlined above may be applied in knot theory as follows. Let $L_{G}$ be the special alternating diagram obtained from the plane bipartite graph $G=(V,E,\mathcal{E})$ by replacing each edge by a crossing as shown in Figure \ref{fig:edge}. See also Figure \ref{fig:alt} for an example.
\begin{figure}[H]
\centering
\includegraphics[width=2cm]{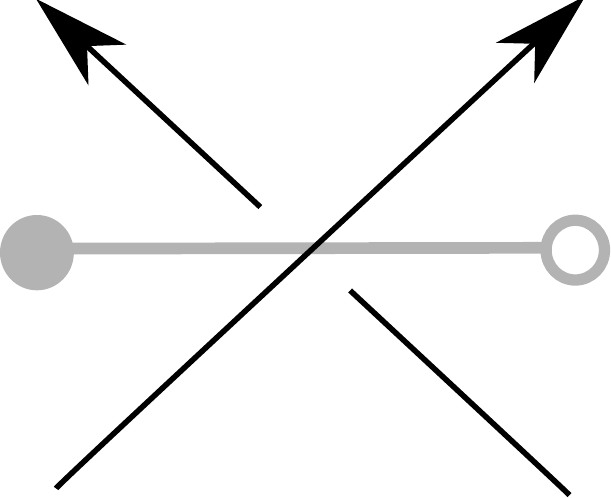}
\caption{A crossing of $L_G$ obtained from an edge of $G$.}
\label{fig:edge}
\end{figure}

\begin{theorem}[\cite{KM,KP}]\label{thm:alt}
For any connected plane bipartite graph $G=(V,E,\mathcal{E})$, the top of the HOMFLY polynomial $P_{L_G}(v,z)$ is equal to $v^{|\mathcal{E}|-(|E|+|V|)+1}I_{G}(v^2)$, where $I_G$ is the interior polynomial of $G$.
\end{theorem}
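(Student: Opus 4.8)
The first thing I would do is pin down the exponent that defines the top. Because the median construction turns each edge of $G$ into exactly one crossing, we have $c(L_G)=|\mathcal{E}|$; and because the Seifert graph of $L_G$ is $G$ itself, its Seifert circles are indexed by the two color classes, so $s(L_G)=|E|+|V|$. Morton's bound \cite{morton} then places the top in $z$-degree $c(L_G)-s(L_G)+1=|\mathcal{E}|-(|E|+|V|)+1$, which is exactly the $z$-power and (after the writhe normalization, the writhe being $+|\mathcal{E}|$ since all crossings are positive) the $v$-shift appearing in the statement. This also confirms that the two sides have the expected shape as Laurent polynomials in $v$, so the real content is the coefficient-by-coefficient equality of $\T_{L_G}(v)$ with $v^{|\mathcal{E}|-(|E|+|V|)+1}I_G(v^2)$.

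The plan for that equality is to compute $\T_{L_G}(v)$ by a skein-theoretic state sum and match it termwise with $\sum_f v^{2\bar\iota(f)}$, i.e.\ $I_G$ evaluated at $v^2$. Rewriting the skein relation as $P_{D_+}=v^2P_{D_-}+vzP_{D_0}$ and resolving the crossings repeatedly expands $P_{L_G}$ over states, each of which is a disjoint unlink whose HOMFLY polynomial is a power of $\delta=(v^{-1}-v)/z$. Tracking the top $z$-degree through this expansion, the surviving contributions come from states in which the oriented ($D_0$) smoothings connect all $|E|+|V|$ Seifert circles: such a smoothing pattern contains a spanning tree of the Seifert graph $G$, and the remaining $|\mathcal{E}|-(|E|+|V|)+1$ crossings are precisely what realizes the top $z$-degree while contributing the $v^2$ factors that assemble into $I_G(v^2)$. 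So the top coefficient is governed by a spanning-tree expansion indexed by the crossings/edges of $G$.

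The substance is to reorganize this spanning-tree sum into the hypertree sum defining the interior polynomial. Each spanning tree $\Gamma$ of the bipartite graph $G$ determines its hyperedge-degree function $f_\Gamma(e)=\deg_\Gamma(e)-1$, a hypertree of $\mathscr{H}=(V,E)$, and I would group the states by $f_\Gamma$. Fixing an order on the crossings (hyperedges), I would then show that, once the external activity is summed out exactly as in the Tutte specialization $T_G(1/x,1)$ that $I_G$ generalizes, the aggregate $v$-weight of all states mapping to a fixed hypertree $f$ collapses to the single power $v^{2\bar\iota(f)}$ recording K\'alm\'an's interior activity $\bar\iota(f)$. Summing over hypertrees produces $v^{|\mathcal{E}|-(|E|+|V|)+1}I_G(v^2)$. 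Invoking \cite{KP}, that $I_G$ depends only on the bipartite graph and not on the chosen hypergraph, keeps this consistent with the fact that $L_G$ sees only $G$; alternatively, one could identify the top coefficient with the $h^*$-polynomial of the root polytope of $G$ and then cite \cite{KP} that this $h^*$-polynomial is $I_G$.

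I expect the main obstacle to be exactly this matching step. Two things must be controlled simultaneously: first, that the skein-theoretic weight of each state really equals $v$ raised to twice a genuinely combinatorial activity statistic, so that the collapse to $v^{2\bar\iota(f)}$ is legitimate; and second, that the contributions of the inactive states cancel, so that nothing of $z$-degree below the top is mistaken for a top term and, conversely, nothing exceeds Morton's bound with a surviving coefficient. Establishing this activity dictionary rigorously — equivalently, recognizing the HOMFLY top as the $h^*$-polynomial of the root polytope — is where the real work lies, whereas the exponent count and the state-sum setup are comparatively routine.
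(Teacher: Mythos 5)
First, a point of reference: the paper does not prove Theorem \ref{thm:alt} at all --- it is stated with the citation \cite{KM,KP} and used as a black box in everything that follows, so there is no internal proof to measure your argument against. What you have written is an attempt to reconstruct the proof from the cited literature, and it has to be judged on those terms.

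On the merits, your exponent bookkeeping is correct ($c(L_G)=|\mathcal{E}|$, $s(L_G)=|E|+|V|$, so Morton's bound \cite{morton} and the writhe-normalized $v$-shift match the exponent in the statement), but the remainder is a plan rather than a proof, and the step you yourself flag is not a deferred technicality --- it \emph{is} the theorem. Showing that, after grouping skein states by the hypertree $f_\Gamma$, the aggregate $v$-weight collapses to the single power $v^{2\bar\iota(f)}$ is exactly the content of the result, and nothing in the sketch makes this plausible beyond the assertion that it should work ``as in the Tutte specialization.'' There is also a smaller but genuine problem in the setup: expanding via $P_{D_+}=v^2P_{D_-}+vzP_{D_0}$ does not yield a sum over independent smoothing choices terminating in unlinks, because switched crossings remain crossings; one needs the ordered, descending-diagram induction, and extracting the top $z$-coefficient from that tree (including the claimed cancellations below Morton's bound) requires real care. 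Your parenthetical alternative --- identify $\T_{L_G}$ with the $h^*$-polynomial of the root polytope $Q_G$ and then invoke \cite{KP} to equate that $h^*$-polynomial with $I_G$ --- is essentially the route actually taken in \cite{KM} combined with \cite{KP}, and is the one I would develop if you want a complete argument rather than a citation.
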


\begin{example}
Let $G$ be the bipartite graph shown in Figure \ref{fig:alt}, and $L_G$ be the special alternating diagram obtained from $G$. Then $I_G(x)$ and $P_{L_G}(v,z)$ are computed as follows. The coefficients of $\T_{L_G}(v)=1v^3+3v^5+3v^7$ agree with those of $I_G(x)$. 
\begin{figure}[htbp]
\begin{center}
\begin{tabular}{c}
\begin{minipage}{0.2\hsize}
\begin{center}
\includegraphics[width=2.5cm]{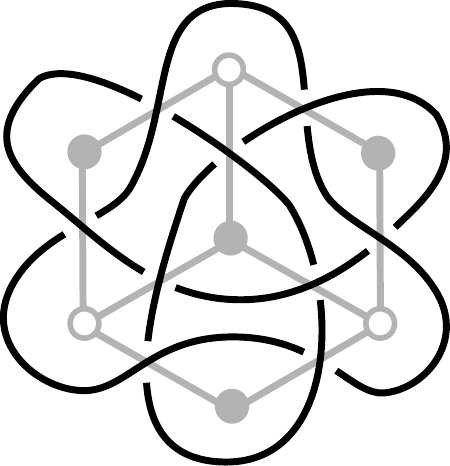}
\end{center}
\end{minipage}

\begin{minipage}{0.7\hsize}
\[
I_{G}(x)=1x^0+3x^1+3x^2.
\]

\[
\begin{array}{llllll}
P_{L_{G}}(v,z)=&+1v^3z^3&+3v^5z^3&+3v^7z^3   &                       \\     
           &        &+3v^5z  &+4v^7z     &-4v^9z                    \\
           &        &        &+2v^7z^{-1}&-3v^9z^{-1}&+1v^{11}z^{-1}.\\
\end{array}
\]
\end{minipage}
\end{tabular}
\caption{A special alternating link diagram and the HOMFLY polynomial.}
\label{fig:alt}
\end{center}
\end{figure}
\end{example}

Before we introduce the interior polynomial of signed bipartite graphs, we introduce the interior polynomial of disconnected bipartite graphs.

\begin{definition}
Let $k(G)$ be the number of components of $G$ and $G_i$ ($1 \le i \le k(G)$) be the connected components of $G$. Then we let
\[
I'_{G}\left(x \right)=\left(1-x \right)^{k(G)-1}\prod_{i=1}^{k(G)} I_{G_i}\left(x \right).
\]
\end{definition}
With this, the next lemma is obvious.
\begin{lemma}\label{lem:disint}
Let $G_1$ and $G_2$ be bipartite graphs and $G_1 \cup G_2$ be the disjoint union of $G_1$ and $G_2$. Then
\[
I'_{G_1 \cup G_2}(x)=(1-x)I'_{G_1}(x)I'_{G_2}(x).
\]
\end{lemma}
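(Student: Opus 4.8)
The plan is to unwind the definition of $I'$ directly and to track just two pieces of bookkeeping: the component count and the resulting exponent of $(1-x)$. The only structural input I need is that the connected components of a disjoint union are exactly the components of the two pieces taken together. In particular, $k(G_1 \cup G_2) = k(G_1) + k(G_2)$, and the list of connected components of $G_1 \cup G_2$ is the concatenation of the list of components of $G_1$ with that of $G_2$.

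First I would substitute $G = G_1 \cup G_2$ into the defining formula for $I'_G$ and split the product over all components of $G$ as the product over the components of $G_1$ times the product over the components of $G_2$. Each of these two partial products is, up to its own power of $(1-x)$, precisely $I_{G_1}$- or $I_{G_2}$-data, i.e. the product appearing inside $I'_{G_1}$ or $I'_{G_2}$.

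Second I would handle the exponent of $(1-x)$. Since $k(G) - 1 = (k(G_1)-1) + (k(G_2)-1) + 1$, the factor $(1-x)^{k(G)-1}$ splits as $(1-x)\cdot (1-x)^{k(G_1)-1}\cdot (1-x)^{k(G_2)-1}$. Pairing each $(1-x)^{k(G_j)-1}$ with the corresponding partial product from the previous step reconstitutes exactly $I'_{G_1}(x)\,I'_{G_2}(x)$, leaving the single surplus factor $(1-x)$ that is claimed.

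There is essentially no genuine obstacle here: the lemma is a purely formal consequence of the definition, which is presumably why the paper calls it obvious. The one point deserving (minimal) care is the additivity of the exponent, where the ``$-1$'' built into each of $I'_{G_1}$, $I'_{G_2}$, and $I'_{G_1 \cup G_2}$ must be reconciled; that bookkeeping is exactly what produces the lone surviving factor $(1-x)$. Finally, the statement extends at once by induction to a disjoint union of any finite number of bipartite graphs.
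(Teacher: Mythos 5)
Your proposal is correct and is exactly the argument the paper has in mind: the paper offers no written proof, simply declaring the lemma ``obvious'' from the definition of $I'$, and your unwinding of the component count and the exponent bookkeeping $k(G_1\cup G_2)-1=(k(G_1)-1)+(k(G_2)-1)+1$ is precisely that obvious argument made explicit.
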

Theorem \ref{thm:alt} extends as follows. Note that we are still in the unsigned case.
\begin{theorem}\label{thm:bip}
For any plane bipartite graph $G=(V,E,\mathcal{E})$, the top of the HOMFLY polynomial $P_{L_G}(v,z)$ is equal to
\[
v^{|\mathcal{E}|-(|E|+|V|)+1}I'_{G}(v^2).
\]
\end{theorem}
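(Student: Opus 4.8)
The plan is to induct on the number $k(G)$ of connected components of $G$. The base case $k(G)=1$ is exactly Theorem \ref{thm:alt}, since then $I'_G=I_G$. For the inductive step I would write $G=G_1\cup G'$, where $G_1$ is a single connected component and $G'$ carries the remaining $k(G)-1$ components, so that the theorem holds for $G'$ by the inductive hypothesis. The geometric input is that, because $G_1$ and $G'$ occupy disjoint disks of the plane, the median construction produces $L_{G_1}$ and $L_{G'}$ in disjoint disks; hence $L_G$ is the split (distant) union $L_{G_1}\sqcup L_{G'}$.

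I would then invoke the standard behavior of the HOMFLY polynomial under split union,
\[
P_{L_G}(v,z)=\frac{v^{-1}-v}{z}\,P_{L_{G_1}}(v,z)\,P_{L_{G'}}(v,z),
\]
which follows from the defining skein relation (apply it to a crossing merging two distant unknots to get $P$ of the two-component unlink, then induct on crossings). The computational core is the bookkeeping of the top. Setting $n_1=|\mathcal{E}_1|-(|E_1|+|V_1|)+1$, $n'=|\mathcal{E}'|-(|E'|+|V'|)+1$, and $n=|\mathcal{E}|-(|E|+|V|)+1$, the additivity of $|\mathcal{E}|$, $|E|$, $|V|$ over the disjoint union gives the key off-by-one relation $n=n_1+n'-1$.

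By Theorem \ref{thm:alt} and the inductive hypothesis, the highest $z$-powers of $P_{L_{G_1}}$ and $P_{L_{G'}}$ are $z^{n_1}$ and $z^{n'}$, with coefficients $v^{n_1}I_{G_1}(v^2)$ and $v^{n'}I'_{G'}(v^2)$; both are nonzero because each interior polynomial is a nonzero polynomial (with constant term $1$), so the leading coefficient of the product is $v^{n_1+n'}I_{G_1}(v^2)I'_{G'}(v^2)$ with no cancellation. Multiplying by $(v^{-1}-v)/z$ and using $(v^{-1}-v)\,v^{\,n_1+n'}=v^{n}(1-v^2)$, I find that the coefficient of $z^{n}$ in $P_{L_G}$ equals $v^{n}(1-v^2)\,I_{G_1}(v^2)\,I'_{G'}(v^2)$. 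Since $n$ coincides with $c(L_G)-s(L_G)+1$, this is genuinely $\T_{L_G}(v)$. Finally, Lemma \ref{lem:disint} (together with $I'_{G_1}=I_{G_1}$) rewrites $(1-v^2)\,I_{G_1}(v^2)\,I'_{G'}(v^2)=I'_G(v^2)$, yielding $\T_{L_G}(v)=v^{n}I'_G(v^2)$ as claimed.

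The main obstacle is not the algebra but making the notion of ``top'' rigorous in two places. First, I must justify carefully that a disjoint plane embedding really yields the split union $L_{G_1}\sqcup L_{G'}$, so that the split-union formula legitimately applies. Second, I must rule out cancellation in the leading $z$-coefficient, which is where the nonvanishing of the interior polynomials is essential: it guarantees $(v^{-1}-v)\,\T_{L_{G_1}}(v)\,\T_{L_{G'}}(v)\neq 0$ and hence that $z^{n}$ is the actual top degree in Morton's sense, not merely an upper bound.
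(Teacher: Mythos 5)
Your proposal is correct and follows essentially the same route as the paper: induction on the number of components, the split-union formula for the HOMFLY polynomial, the off-by-one bookkeeping of Morton's bound, and Lemma \ref{lem:disint} to reassemble $I'_G$. The only (welcome) difference is that you make explicit the non-cancellation of the leading $z$-coefficient via the nonvanishing of the interior polynomials, a point the paper treats more briefly.
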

\begin{proof}
The proof is by induction on the component number $k(G)$.
When $k(G)=1$, the statement holds by Theorem \ref{thm:alt}. Suppose that the theorem holds when $k(G) < m$ and let $G$ have $k(G)=m$ components. Let us take non-empty bipartite graphs $G_1=(V_1,E_1,\,mathcal{E}_1),G_2=(V_2,E_2,\mathcal{E}_2)$ such that $G=G_1\cup G_2$. Recall that the HOMFLY polynomial satisfies
\[
P_{L_{G_1 \cup G_2}}(v,z)=\frac{v^{-1}-v}{z}P_{L_{G_1}}(v,z)P_{L_{G_2}}(v,z).
\]
Paying attention to Morton's bound $n(D)$, if we have $n(L_{G_1})=c(L_{G_1})-s(L_{G_1})+1$ and $n(L_{G_2})=c(L_{G_2})-s(L_{G_2})+1$, then $n(L_{G_1} \cup L_{G_2})=c(L_{G_1} \cup L_{G_2})-s(L_{G_1} \cup L_{G_2})+1=c(L_{G_1})+c(L_{G_2})-s(L_{G_1}) -s(L_{G_2})+1=n(G_1)+n(G_2)-1$. Thus
\begin{eqnarray*}
\T_{L_{G_1 \cup G_2}}(v)
&=&(v^{-1}-v)\T_{L_{G_1}}(v)\T_{L_{G_2}}(v) \\
&=&v^{-1}(1-v^2)\T_{L_{G_1}}(v)\T_{L_{G_2}}(v) \\
&=&v^{-1}(1-v^2)v^{|\mathcal{E}_1|-(|E_1|+|V_1|)+1}I'_{G_1}(v^2)v^{|\mathcal{E}_2|-(|V_2|+|E_2|)+1}I'_{G_2}(v^2) \\
&=&v^{|\mathcal{E}_1|+|\mathcal{E}_2|-(|E_1|+|V_1|+|E_2|+|V_2|)+1}(1-v^2)I'_{G_1}(v^2)I'_{G_2}(v^2)   \\
&=&v^{|\mathcal{E}_1|+|\mathcal{E}_2|-(|E_1|+|E_2|+|V_1|+|V_2|)+1}I'_{G_1 \cup G_2}(v^2).
\end{eqnarray*}
Therefore the theorem holds when $k(G)=m$.
\end{proof}

\subsection{A signed version and the HOMFLY polynomial}
Let $G=(V,E,\mathcal{E})$ be a signed bipartite graph with sign $\mathcal{E} \to \{ +1 , -1 \}$ and $\mathcal{E}_{-}(G)$ (resp.\ $\mathcal{E}_{+}(G)$) be the set of negative (resp.\ positive) edges of $G$.  
Let $\mathcal{S}$ be a subset of $\mathcal{E}_-(G)$. The unsigned bipartite graph $G \setminus \mathcal{S}$ is obtained from G by deleting all edges of $\mathcal{S}$ and forgetting the signs of the remaining edges. So we may compute the interior polynomial of $G \setminus \mathcal{S}$. We will construct the interior polynomial of signed bipartite graphs as follows.
\begin{definition}
Let $G=(V,E,\mathcal{E})$ be a signed bipartite graph. We let
\[
I^{+}_{G}\left(x \right)=\sum_{\mathcal{S} \subseteq \mathcal{E}_{-}(G)}(-1)^{|\mathcal{S}|}I'_{G \setminus \mathcal{S}}(x).
\]
\end{definition}
We call $I^+_G(x)$ the signed interior polynomial.
\begin{example}
Let $G$ be the signed bipartite graph shown in Figure \ref{fig:signbip}. We compute the signed interior polynomial as Table \ref{tab:1}. All (unsigned) interior polynomials that occur in the computations are obtained easily from the definition in \cite{K}. Note that in the last row, $G\setminus \mathcal{S}$ has two components with $I(x)=1+x+x^2$ for the hexagon and $I(x)=1$ for the isolated point.
\begin{figure}[H]
\centering
\includegraphics[width=2cm]{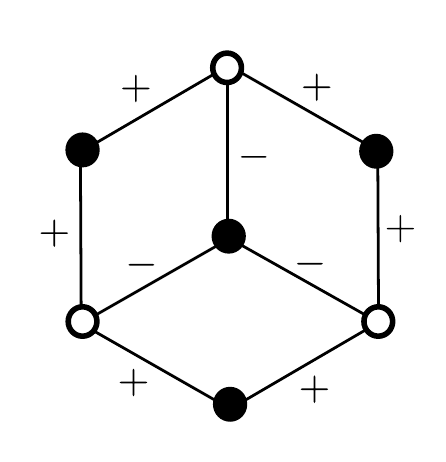}
\caption{A signed bipartite graph.}
\label{fig:signbip}
\end{figure}

\begin{table}[htbp]
\centering
\caption{A computation of signed interior polynomial $I^+_G(x)$.}\label{tab:1}
\begin{tabular}{lllllllll}
& &  & & & $(-1)^{|\mathcal{S}|}I'_{G \setminus \mathcal{S}}$　　\\
\hline \vspace{4pt}
           
           &\includegraphics[width=1cm]{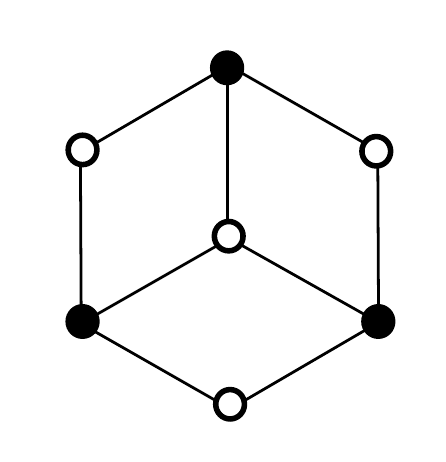}
           &                                             &
&          &\raisebox{+8pt}{\hspace{3pt}$1x^{0}+3x^{1}+3x^{2}$}   
           &\raisebox{+8pt}{$\times 1$}\\    
            \includegraphics[width=1cm]{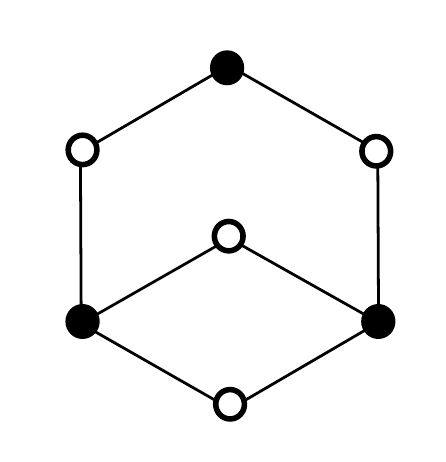}
           & \includegraphics[width=1cm]{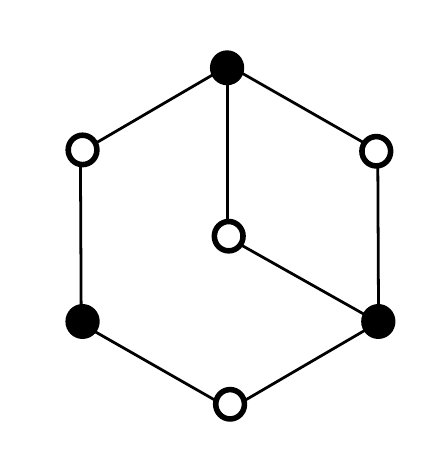}
           & \includegraphics[width=1cm]{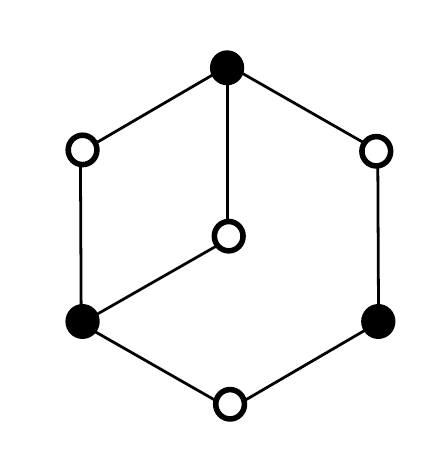}& 
&\raisebox{+8pt}{$-$\hspace{-10pt}}&\raisebox{+8pt}{$(1x^{0}+2x^{1}+2x^{2})$}
           & \raisebox{+8pt}{$\times 3$}\\ 
            \includegraphics[width=1cm]{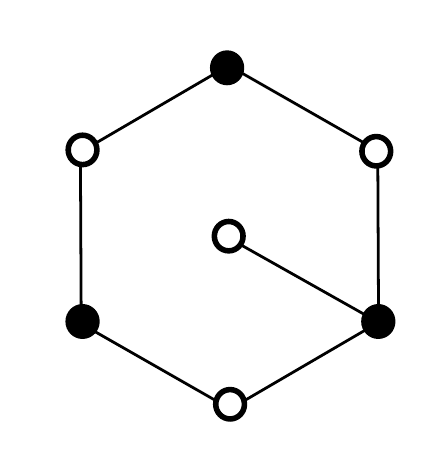}
           & \includegraphics[width=1cm]{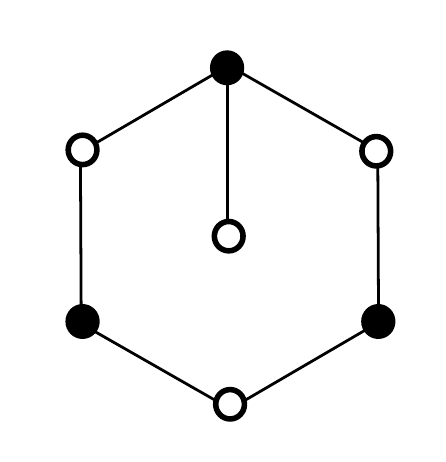}
           & \includegraphics[width=1cm]{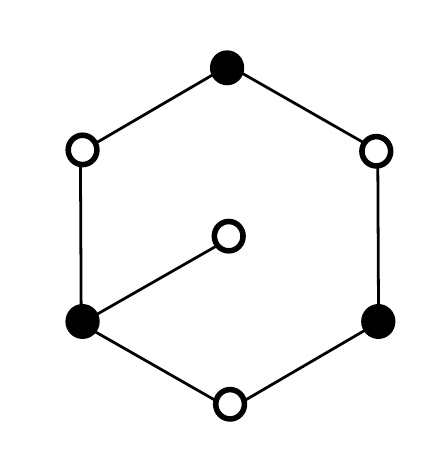}&
&\raisebox{+8pt}&\raisebox{+8pt}{\hspace{3pt}$1x^{0}+1x^{1}+1x^{2}$}         
           & \raisebox{+8pt}{$\times 3$}\\ 
             
           & \includegraphics[width=1cm]{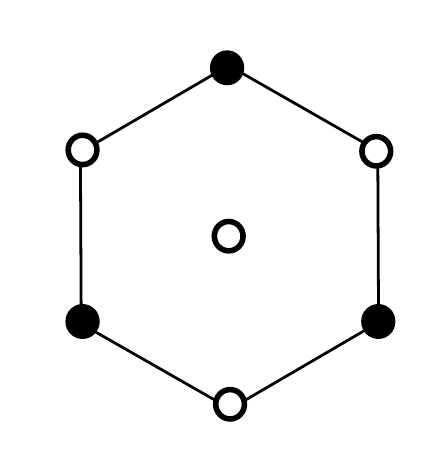} 
           &                                            &
&\raisebox{+8pt}{$-$}&\raisebox{+8pt}{$(1x^{0}+0x^{1}+0x^{2}-1x^{3})$}　& \raisebox{+8pt}{$\times 1$}　　\\ 
\hline \\
 & &$I^{+}_{G}(x)$&$=$& &\hspace{3pt}$0x^{0}+0x^{1}+0x^{2}+1x^{3}$
\end{tabular}
\end{table}
\end{example}

Before we show that the signed interior polynomial of a plane bipartite graph $G$ is equivalent to the top of the HOMFLY polynomial of $L_G$, we show the following lemma for (not necessarily plane) bipartite graphs. Let $G$ be a signed bipartite graph and let $\epsilon$ be one of the negative edges in $G$. The bipartite graph $G \setminus \epsilon$ is obtained from $G$ by deleting $\epsilon$ and $G+\epsilon$ is obtained from $G$ by replacing the negative edge $\epsilon$ by a positive edge (see Figure \ref{fig:replace}).

\begin{figure}[htbp]
\begin{tabular}{ccc}
\begin{minipage}{0.33\hsize}
\begin{center}
\includegraphics[width=3.5cm]{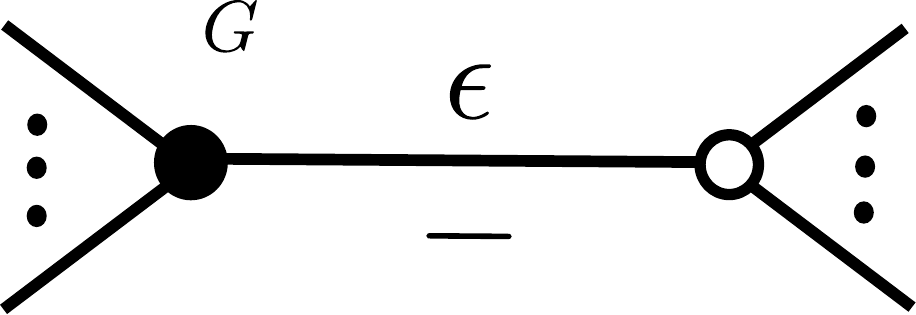}
\end{center}
\end{minipage}
\begin{minipage}{0.33\hsize}
\begin{center}
\includegraphics[width=3.5cm]{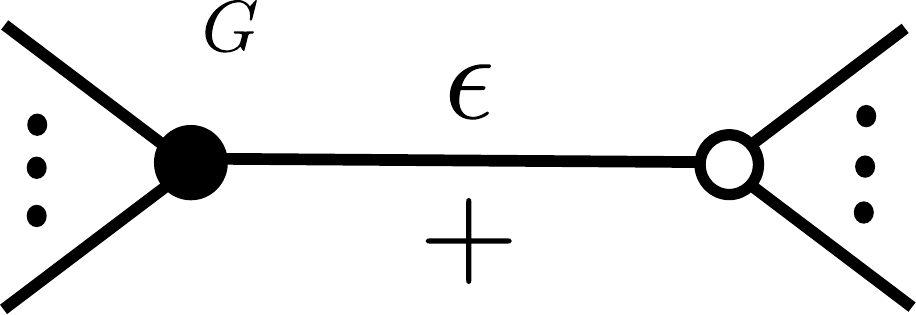}
\end{center}
\end{minipage}
\begin{minipage}{0.33\hsize}
\begin{center}
\includegraphics[width=3.5cm]{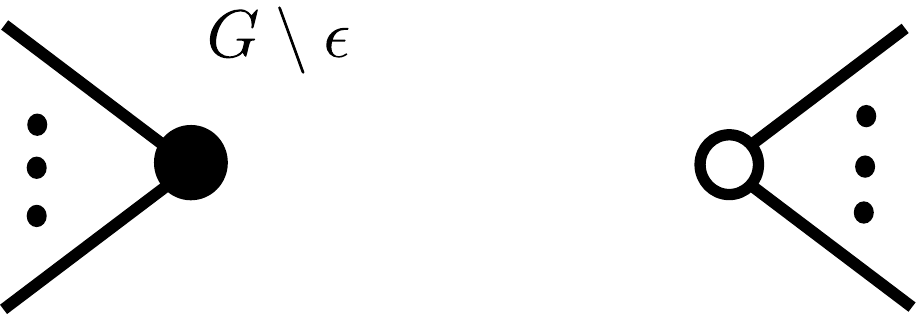}
\end{center}
\end{minipage}
\end{tabular}
\caption{A version of the skein triple for signed bipartite graph.}\label{fig:replace}
\end{figure}

\begin{lemma}\label{lem:sumint}
Let $G$ be a signed bipartite graph and let $\epsilon$ be one of the negative edges in $G$. Then we have $I^+_G(x)=I^+_{G+\epsilon}(x)-I^+_{G \setminus \epsilon}(x)$.

\end{lemma}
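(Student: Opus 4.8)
The plan is to expand all three signed interior polynomials directly from the defining formula $I^{+}_{H}(x)=\sum_{\mathcal{S}\subseteq\mathcal{E}_{-}(H)}(-1)^{|\mathcal{S}|}I'_{H\setminus\mathcal{S}}(x)$ and then reorganize the resulting sum over subsets of $\mathcal{E}_{-}(G)$ according to whether or not they contain $\epsilon$. The first point I would record is that $G+\epsilon$ and $G\setminus\epsilon$ share the same set of negative edges, namely $\mathcal{E}_{-}(G)\setminus\{\epsilon\}$, which I will abbreviate as $\mathcal{E}'$. Consequently both $I^{+}_{G+\epsilon}$ and $I^{+}_{G\setminus\epsilon}$ are alternating sums indexed by the subsets $\mathcal{S}\subseteq\mathcal{E}'$.

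Next I would carefully untangle the two different deletion operations in play, since this is the only place where one can slip. In the definition of $I^{+}$, the symbol $H\setminus\mathcal{S}$ means ``delete the edges of $\mathcal{S}$ and forget all remaining signs,'' producing an unsigned graph, whereas in the statement $G\setminus\epsilon$ denotes the \emph{signed} graph obtained by deleting only $\epsilon$ and keeping the signs of the other edges. With this distinction in hand, for every $\mathcal{S}\subseteq\mathcal{E}'$ the (now positive) edge $\epsilon$ of $G+\epsilon$ survives all these deletions, so $(G+\epsilon)\setminus\mathcal{S}$ is the unsigned graph $G\setminus\mathcal{S}$ in which $\epsilon$ is still present because $\epsilon\notin\mathcal{S}$; likewise $(G\setminus\epsilon)\setminus\mathcal{S}$ is the unsigned graph $G\setminus(\mathcal{S}\cup\{\epsilon\})$. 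This yields $I^{+}_{G+\epsilon}(x)=\sum_{\mathcal{S}\subseteq\mathcal{E}'}(-1)^{|\mathcal{S}|}I'_{G\setminus\mathcal{S}}(x)$ and $I^{+}_{G\setminus\epsilon}(x)=\sum_{\mathcal{S}\subseteq\mathcal{E}'}(-1)^{|\mathcal{S}|}I'_{G\setminus(\mathcal{S}\cup\{\epsilon\})}(x)$.

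Finally I would form the difference $I^{+}_{G+\epsilon}(x)-I^{+}_{G\setminus\epsilon}(x)$. The first sum ranges exactly over the subsets of $\mathcal{E}_{-}(G)$ that do not contain $\epsilon$. In the second sum I substitute $\mathcal{S}'=\mathcal{S}\cup\{\epsilon\}$, so that $\mathcal{S}'$ ranges over the subsets of $\mathcal{E}_{-}(G)$ that do contain $\epsilon$, and $|\mathcal{S}'|=|\mathcal{S}|+1$ converts the coefficient $-(-1)^{|\mathcal{S}|}$ into $(-1)^{|\mathcal{S}'|}$. Hence the two contributions recombine into $\sum_{\mathcal{S}\subseteq\mathcal{E}_{-}(G)}(-1)^{|\mathcal{S}|}I'_{G\setminus\mathcal{S}}(x)$, which is precisely $I^{+}_{G}(x)$; here I am only using that the power set of $\mathcal{E}_{-}(G)$ is partitioned into the subsets containing $\epsilon$ and those avoiding it.

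I expect no serious obstacle, since the argument is a purely formal manipulation of the defining alternating sum, and the disconnected extension $I'$ (together with Lemma \ref{lem:disint}) enters only as an opaque summand. The single point demanding care, which I would flag explicitly, is the bookkeeping distinction between the sign-forgetting deletion $\setminus\mathcal{S}$ of the definition and the sign-preserving deletion $\setminus\epsilon$ of the statement, and in particular the verification of the identity $(G\setminus\epsilon)\setminus\mathcal{S}=G\setminus(\mathcal{S}\cup\{\epsilon\})$ of unsigned graphs on which the reindexing rests.
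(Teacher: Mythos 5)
Your proposal is correct and follows essentially the same route as the paper: both split the sum defining $I^{+}_{G}$ according to whether $\epsilon\in\mathcal{S}$, identify the $\epsilon\notin\mathcal{S}$ part with $I^{+}_{G+\epsilon}$ and the $\epsilon\in\mathcal{S}$ part (reindexed via $\mathcal{S}\mapsto\mathcal{S}\setminus\{\epsilon\}$) with $-I^{+}_{G\setminus\epsilon}$. Your explicit attention to the distinction between sign-forgetting and sign-preserving deletion is a welcome clarification, but the argument is the same.
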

\begin{proof}
By the definition of signed interior polynomial,
\begin{eqnarray*}
I^+_G(x)&=&\sum_{\mathcal{S} \subseteq \mathcal{E}_{-}(G)}(-1)^{|\mathcal{S}|}I'_{G \setminus \mathcal{S}}(x) \\
         &=&\sum_{\mathcal{S} \subseteq \mathcal{E}_{-}(G) \atop \epsilon \notin \mathcal{S}}(-1)^{|\mathcal{S}|}I'_{G \setminus \mathcal{S}}(x) \,\,\, +  \sum_{ \mathcal{S} \subseteq \mathcal{E}_{-}(G) \atop \epsilon \in \mathcal{S}}(-1)^{|\mathcal{S}|}I'_{G \setminus \mathcal{S}}(x)  \\
         &=&\sum_{ \mathcal{S} \subseteq \mathcal{E}_{-}(G+\epsilon)}(-1)^{|\mathcal{S}|}I'_{(G+\epsilon) \setminus \mathcal{S}}(x) \,\,\, -  \sum_{(\mathcal{S} \setminus \epsilon) \subseteq \mathcal{E}_{-}(G \setminus \epsilon)}(-1)^{|\mathcal{S} \setminus \epsilon|}I'_{(G \setminus \epsilon) \setminus (\mathcal{S} \setminus \epsilon)}(x)  \\
         &=&I^+_{G+\epsilon}(x)-I^+_{G \setminus \epsilon}(x).
\end{eqnarray*}
This completes the proof.
\end{proof}

For any signed bipartite graph $G$, the link diagram $L_G$ is obtained from $G$ by replacing positive and negative edges by positive and negative crossings, respectively, as shown in Figure \ref{fig:posneg}.
\begin{figure}[htbp]
\begin{tabular}{ccc}
\begin{minipage}{0.4\hsize}
\begin{center}
\includegraphics[width=2cm]{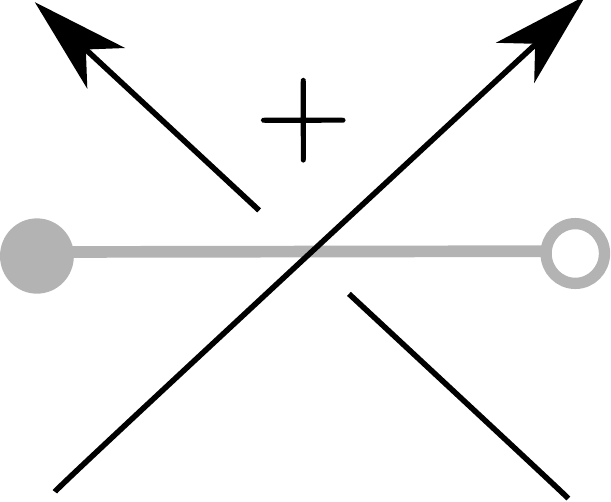}
\end{center}
\end{minipage}
\begin{minipage}{0.4\hsize}
\begin{center}
\includegraphics[width=2cm]{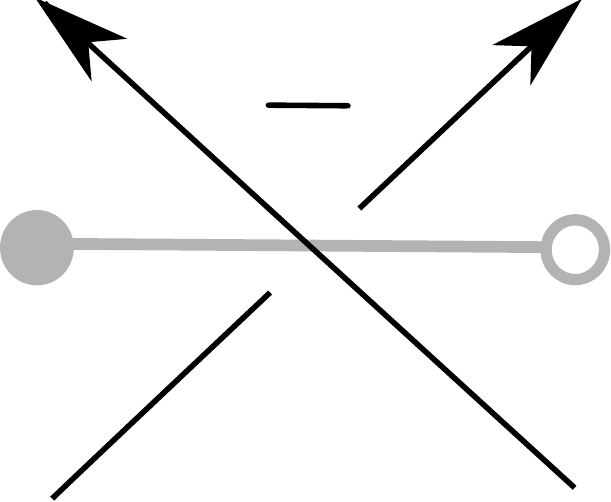}
\end{center}
\end{minipage}
\end{tabular}
\caption{The positive crossing and the negative crossing corresponding to a positive edge and a negative edge, respectively.}
\label{fig:posneg}
\end{figure}

Therefore we extend Theorem \ref{thm:alt} to signed bipartite graph.
\begin{theorem}\label{thm:main}
Let $G$ be a signed plane bipartite graph with vertex set $E\cup V$ separated by color and edge set $\mathcal{E}_+\cup \mathcal{E}_-$ separated by sign. Then the top of the HOMFLY polynomial $P_{L_G}(v,z)$ is equal to
\[
v^{|\mathcal{E}_{+}|-|\mathcal{E}_{-}|-(|E|+|V|)+1}I^{+}_{G}(v^2 ).
\]
\end{theorem}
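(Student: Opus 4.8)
The plan is to argue by induction on the number $m=|\mathcal{E}_-(G)|$ of negative edges of $G$, turning the HOMFLY skein relation into a recursion that lowers $m$ and matching it against the recursion for $I^+_G$ provided by Lemma \ref{lem:sumint}. Throughout write $N=|\mathcal{E}_+|+|\mathcal{E}_-|$ for the number of edges of $G$ (equivalently, the crossing number of $L_G$) and $M=|E|+|V|$ for the number of vertices. In the base case $m=0$ the graph carries only positive edges, so the sum defining $I^+_G$ has a single term and $I^+_G=I'_G$, while the exponent $|\mathcal{E}_+|-|\mathcal{E}_-|-M+1$ reduces to $|\mathcal{E}|-M+1$; thus the assertion is exactly Theorem \ref{thm:bip}.

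For the inductive step, fix a negative edge $\epsilon$ and consider the three plane bipartite graphs $G$, $G+\epsilon$, and $G\setminus\epsilon$ of Figure \ref{fig:replace}. Under the median construction these form a HOMFLY skein triple: $L_{G+\epsilon}$ carries a positive crossing at $\epsilon$ and plays the role of $D_+$, $L_G$ carries the negative crossing and plays the role of $D_-$, and $L_{G\setminus\epsilon}$ is the Seifert-oriented smoothing, playing the role of $D_0$, since deleting an edge of a bipartite graph is precisely the oriented smoothing of the corresponding crossing. The skein relation therefore reads
\[
v^{-1}P_{L_{G+\epsilon}}(v,z)-vP_{L_G}(v,z)=zP_{L_{G\setminus\epsilon}}(v,z).
\]
Both $G+\epsilon$ and $G\setminus\epsilon$ have $m-1$ negative edges, so the induction hypothesis applies to each of them.

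The heart of the argument is the bookkeeping of $z$-degrees. Because the oriented smoothing is insensitive to the sign of a crossing, all three diagrams have the same $M$ Seifert circles, while their crossing numbers are $N$, $N$, and $N-1$ respectively. Morton's inequality then bounds the top $z$-degree by $d:=N-M+1$ in $P_{L_{G+\epsilon}}$ and $P_{L_G}$, and by $d-1$ in $P_{L_{G\setminus\epsilon}}$, so every term of the skein relation has $z$-degree at most $d$. Extracting the coefficient of $z^d$ yields the linear relation $v^{-1}\T_{L_{G+\epsilon}}(v)-v\T_{L_G}(v)=\T_{L_{G\setminus\epsilon}}(v)$ among the three tops, where the right-hand term appears because the coefficient of $z^d$ in $zP_{L_{G\setminus\epsilon}}$ is exactly the top of $P_{L_{G\setminus\epsilon}}$, sitting in degree $d-1$. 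Solving for $\T_{L_G}$ and inserting the induction hypothesis for $G+\epsilon$ (with $|\mathcal{E}_+|+1$ positive and $|\mathcal{E}_-|-1$ negative edges) and for $G\setminus\epsilon$ (with $|\mathcal{E}_+|$ positive and $|\mathcal{E}_-|-1$ negative edges), the stray powers of $v$ collapse to the single common factor $v^{|\mathcal{E}_+|-|\mathcal{E}_-|-M+1}$, leaving
\[
\T_{L_G}(v)=v^{|\mathcal{E}_+|-|\mathcal{E}_-|-M+1}\bigl(I^+_{G+\epsilon}(v^2)-I^+_{G\setminus\epsilon}(v^2)\bigr).
\]
By Lemma \ref{lem:sumint} the bracket equals $I^+_G(v^2)$, which is the claim.

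I expect the main obstacle to be the $z$-degree bookkeeping rather than the algebra. One must verify that deleting $\epsilon$ leaves the Seifert circle count unchanged at $M$, so that $L_{G\setminus\epsilon}$ is genuinely the median diagram of $G\setminus\epsilon$ (with any split trivial components, arising from vertices that become isolated, absorbed correctly by the $I'$ convention of Lemma \ref{lem:disint}), and that its top really sits one $z$-degree below those of $L_G$ and $L_{G+\epsilon}$, so that multiplication by $z$ brings it up into degree $d$. Once these facts about Seifert circles and Morton's bound are in place, the skein relation and Lemma \ref{lem:sumint} combine purely formally to finish the induction.
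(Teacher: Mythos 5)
Your proof is correct and follows essentially the same route as the paper: induction on the number of negative edges, with the base case handled by Theorem \ref{thm:bip}, the skein relation applied to the triple $L_{G+\epsilon}, L_G, L_{G\setminus\epsilon}$ with the same Morton-bound bookkeeping (equal Seifert circle counts, crossing numbers $N$, $N$, $N-1$), and Lemma \ref{lem:sumint} closing the induction. Your explicit attention to why the top of $P_{L_{G\setminus\epsilon}}$ enters in degree $d-1$ and to isolated vertices being absorbed by the $I'$ convention makes the degree argument slightly more careful than the paper's, but the substance is identical.
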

\begin{proof}
We proceed by induction on the number of negative edges $|\mathcal{E}_{-}|$.
When $|\mathcal{E}_{-}|=0$, we have $I^{+}_{G}(x)=I'_{G}(x)$. Thus the statement holds by Theorem \ref{thm:bip}.

When $|\mathcal{E}_{-}|<m$, we suppose that the theorem holds and let a bipartite graph $G$ have $m$ negative edges. We take a negative edge $\epsilon$ in $G$. From Lemma \ref{lem:sumint}, we have $I^+_G(x)=I^+_{G+\epsilon}(x)-I^+_{G \setminus \epsilon}(x)$. Now by the induction hypothesis applied to $G+\epsilon$ and $G\setminus\varepsilon$, and the fact that $L_{G+\epsilon},L_G, L_{G\setminus\epsilon}$ form a skein triple, we have

\begin{eqnarray*}
I^+_G(v^2)&=&I^+_{G+\epsilon}(v^2)-I^+_{G \setminus \epsilon}(v^2) \\
         &=&\frac{1}{v^{(|\mathcal{E}_{+}|+1)-(|\mathcal{E}_{-}|-1)-(|E|+|V|)+1}}\T_{L_{G+\epsilon}}(v)-\frac{1}{v^{|\mathcal{E}_{+}|-(|\mathcal{E}_{-}|-1)-(|E|+|V|)+1}}\T_{L_{G \setminus \epsilon}}(v)\\
         &=&\frac{1}{v^{|\mathcal{E}_{+}|-|\mathcal{E}_{-}|-(|E|+|V|)+1}}\left(v^{-2}\T_{L_{G+\epsilon}}(v)-v^{-1}\T_{L_{G \setminus \epsilon}}(v)\right)  \\
         &=&\frac{1}{v^{|\mathcal{E}_{+}|-|\mathcal{E}_{-}|-(|E|+|V|)+1}}\T_{L_{G}}(v).
\end{eqnarray*}
For the last step note that Morton's bound is the same for $G+\epsilon$ and $G$, and it is one less than that for $G\setminus \epsilon$. Hence $\T_{L_G}(v)=v^{|\mathcal{E}_{+}|-|\mathcal{E}_{-}|-(|E|+|V|)+1}I^+_G(v^2)$. Therefore the theorem holds when $|\mathcal{E}_{-}|=m$.
\end{proof}
\begin{example}
Let $G$ be the signed bipartite graph shown in Figure \ref{fig:signbip}. Then the link diagram $L_G$ obtained from $G$ is as shown in Figure \ref{fig:nonalt}. We compute the HOMFLY polynomial of $L_G$ as follows. The coefficients of $\T_{L_G}(v)=1v^3$ agree with those of $I^+_G(x)$.
\begin{figure}[htbp]
\begin{tabular}{cc}
\begin{minipage}{0.2\hsize}
\begin{center}
\includegraphics[width=2.5cm]{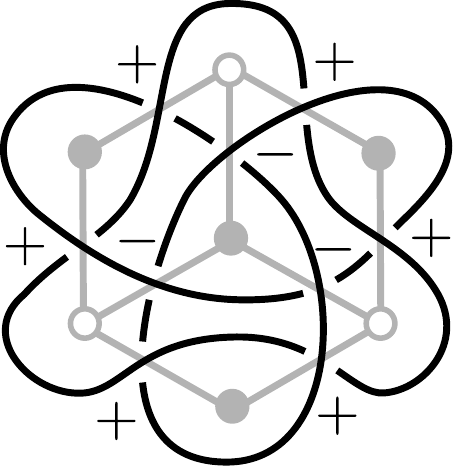}
\end{center}
\end{minipage}
\begin{minipage}{0.7\hsize}
\[
\begin{array}{llll}
P_{L_G}(v,z)=&           &+1v^3z^3                    \\     
         &           &+4v^3z      &-1v^5z         \\
         &-1vz^{-1}  &+3v^3z^{-1} &-2v^5z^{-1}.
\end{array}
\]
\end{minipage}
\end{tabular}
\caption{A non-alternating but special link diagram and the HOMFLY polynomial.}
\label{fig:nonalt}
\end{figure}
\end{example}
If the diagram $D$ is homogeneous \cite{C} (for example, if it is special alternating), then Morton's bound is sharp: the maximal exponent of $z$ in the HOMFLY polynomial is equal to $c(D)-s(D)+1$. For the special alternating link diagram $L_G$ obtained from a plane bipartite graph $G$, the maximal exponent of $z$ is $c(L_G)-s(L_G)+1=|\mathcal{E}|-(|E|+|V|)+1$. In signed cases, Morton's bound may not be sharp and then $I^+_G=0$. We find a sufficient condition for this phenomenon in section \ref{sec:ehr}.

For any oriented diagram, We prove same statement. First, we introduce the Seifert graph of an oriented link diagram $D$. A Seifert circle is a simple closed curve which results when we smooth every crossing of an oriented link diagram in the orientation preserving way. There are two kinds of Seifert circles. We say that a Seifert circle is Type I if it does not contain any other Seifert circles, otherwise it is Type I\hspace{-.1em}I. If the diagram has no Seifert circles of Type I\hspace{-.1em}I, the diagram is a special diagram. The vertex set of the Seifert graph consists of Seifert circles and the edge set consists of crossings with signs. A Seifert graph is always a bipartite graph. If $G$ is the Seifert graph of a special link diagram $D$, then $L_G=D$.

Next, we prove one property of the signed interior polynomial related to block sum.
\begin{theorem}\label{thm:murasugisum}
Let $G_1=(V_1,E_1,\mathcal{E}_1)$ and $G_2=(V_2,E_2,\mathcal{E}_2)$ be bipartite graphs such that $G_1 \cap G_2=\{ v\}$. Let $G_1*G_2$ be the graph obtained by identifying this one vertex. Then
\[
I^+_{G_1 * G_2}(x)=I^+_{G_1}(x) I^+_{G_2}(x).
\]
\end{theorem}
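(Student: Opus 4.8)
The plan is to reduce the multiplicativity under one-vertex gluing (block sum) for the signed interior polynomial $I^+$ to a corresponding multiplicativity for the unsigned extension $I'$, and then to prove the latter. The signed invariant is defined as an alternating sum $I^+_G(x)=\sum_{\mathcal{S}\subseteq\mathcal{E}_-(G)}(-1)^{|\mathcal{S}|}I'_{G\setminus\mathcal{S}}(x)$, so the natural first step is to expand both sides of the claimed identity in these terms. Since $G_1$ and $G_2$ share only the vertex $v$, their edge sets are disjoint, and hence $\mathcal{E}_-(G_1*G_2)=\mathcal{E}_-(G_1)\sqcup\mathcal{E}_-(G_2)$. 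A subset $\mathcal{S}$ of the negative edges of $G_1*G_2$ therefore splits uniquely as $\mathcal{S}=\mathcal{S}_1\sqcup\mathcal{S}_2$ with $\mathcal{S}_i\subseteq\mathcal{E}_-(G_i)$, and $(-1)^{|\mathcal{S}|}=(-1)^{|\mathcal{S}_1|}(-1)^{|\mathcal{S}_2|}$. The key observation is that deletion commutes with the gluing, in the sense that $(G_1*G_2)\setminus\mathcal{S}=(G_1\setminus\mathcal{S}_1)*(G_2\setminus\mathcal{S}_2)$, where the two pieces are still joined at $v$ (deleting edges does not remove the shared vertex).

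Granting the unsigned block-sum formula $I'_{H_1*H_2}(x)=I'_{H_1}(x)\,I'_{H_2}(x)$ for bipartite graphs $H_1,H_2$ meeting in a single vertex, the computation then factors cleanly:
\begin{align*}
I^+_{G_1*G_2}(x)
&=\sum_{\mathcal{S}\subseteq\mathcal{E}_-(G_1*G_2)}(-1)^{|\mathcal{S}|}I'_{(G_1*G_2)\setminus\mathcal{S}}(x)\\
&=\sum_{\mathcal{S}_1\subseteq\mathcal{E}_-(G_1)}\ \sum_{\mathcal{S}_2\subseteq\mathcal{E}_-(G_2)}(-1)^{|\mathcal{S}_1|}(-1)^{|\mathcal{S}_2|}\,I'_{(G_1\setminus\mathcal{S}_1)*(G_2\setminus\mathcal{S}_2)}(x)\\
&=\left(\sum_{\mathcal{S}_1\subseteq\mathcal{E}_-(G_1)}(-1)^{|\mathcal{S}_1|}I'_{G_1\setminus\mathcal{S}_1}(x)\right)\left(\sum_{\mathcal{S}_2\subseteq\mathcal{E}_-(G_2)}(-1)^{|\mathcal{S}_2|}I'_{G_2\setminus\mathcal{S}_2}(x)\right)\\
&=I^+_{G_1}(x)\,I^+_{G_2}(x),
\end{align*}
so the entire weight of the theorem rests on the unsigned identity. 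That identity should itself be reduced to the connected case: if $H_1$ has components $A_1,\dots,A_p$ and $H_2$ has components $B_1,\dots,B_q$, with the gluing vertex $v$ lying in, say, $A_1$ and $B_1$, then $H_1*H_2$ has components $A_1*B_1,A_2,\dots,A_p,B_2,\dots,B_q$, and I would unwind the definition $I'_{G}=(1-x)^{k(G)-1}\prod_i I_{G_i}$ together with Lemma \ref{lem:disint} to check that the $(1-x)^{k-1}$ factors bookkeep correctly.

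The genuine mathematical content, and the main obstacle, is the connected case of the unsigned block-sum formula: for connected bipartite graphs meeting at a single vertex $v$, one must show $I_{A*B}(x)=I_A(x)\,I_B(x)$. The cleanest route is through the root polytope and the Ehrhart-polynomial description of $I$ recalled in the introduction (following \cite{KP}), since a one-vertex gluing of bipartite graphs corresponds to a combinatorially transparent operation on the associated root polytopes and their hypertree sets; alternatively one can argue directly that a hypertree of $A*B$ restricts to a hypertree on each side and that the interior/activity statistics add, so that the generating functions multiply. I expect the bookkeeping of the shared vertex $v$ to be the delicate point: $v$ appears in both the hypergraph structure of $A$ and that of $B$, and one must verify that the local contribution at $v$ does not get double-counted and that the valid-transfer (hypertree) conditions on the two sides are independent. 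Once this connected multiplicativity is established, the disconnected reduction and the signed alternating-sum expansion above are purely formal, and the theorem follows.
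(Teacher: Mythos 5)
Your proposal follows essentially the same route as the paper: reduce the signed identity to the unsigned one by splitting the alternating sum over $\mathcal{S}=\mathcal{S}_1\sqcup\mathcal{S}_2$ (using that $(G_1*G_2)\setminus\mathcal{S}=(G_1\setminus\mathcal{S}_1)*(G_2\setminus\mathcal{S}_2)$), and reduce the unsigned identity $I'_{H_1*H_2}=I'_{H_1}I'_{H_2}$ to the connected case by tracking the $(1-x)^{k-1}$ factors. The one step you single out as the main obstacle --- multiplicativity of $I$ for connected bipartite graphs glued at a single vertex --- is not reproved in the paper either; it is quoted directly as Theorem 6.7 of \cite{K}, so your argument is complete once you cite that result rather than re-derive it.
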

\begin{proof}
First, we prove the statement of the theorem for disconnected bipartite graphs and the interior polynomial $I'$. Consider the possibly disconnected but unsigned bipartite graphs $G_1=(V_1,E_1,\mathcal{E}_1)$ and $G_2=(V_2,E_2,\mathcal{E}_2)$, which have $k(G_1)=n$ and $k(G_2)=m$ components, respectively. We take the disjoint connected bipartite graphs $G_1^1,\ldots,G_1^n$ such that $G_1=G_1^1\cup \cdots \cup G_1^n$. In the same way, we take the disjoint connected bipartite graphs $G_2^1,\ldots,G_2^m$ such that $G_2=G_2^1\cup \cdots \cup G_2^m$. Without loss of generality, we can suppose that $G_1 \cap G_2=G_1^1\cap G_2^1=\{ v\}$. From \cite[Theorem 6.7]{K}, we have $I_{G_1^1* G_2^1}(x)=I_{G_1^1}(x)I_{G_2^1}(x)$. From the definition of $I'$, we have 
\begin{eqnarray*}
I'_{G_1}(x)I'_{G_2}(x)
&=&(1-x)^{n-1}I_{G_1^1}(x)I_{G_1^2}(x)\cdots I_{G_1^n}(x) \cdot(1-x)^{m-1}I_{G_2^1}(x) I_{G_2^2}(x)\cdots I_{G_2^m}(x)\\
&=&(1-x)^{n+m-2}I_{G_1^1*G_2^1}(x)I_{G_1^2}(x)\cdots I_{G_1^n}(x) I_{G_2^2}(x)\cdots I_{G_2^m}(x)\\
&=&I'_{G_1 * G_2}(x).
\end{eqnarray*}
The theorem follows easily from this and from the definiton of the signed interior polynomial.
\end{proof}
In \cite{MP}, the block sum in graph theory is related to the $*$-product (or Murasugi-sum) in knot theory and Murasugi and Pryzytycki showed that $\T_{{D}*{D'}}(v)=\T_{D}(v)\T_{D'}(v)$. Now we are in a position to prove our main theorem.

\begin{proof}[proof of Theorem \ref{thm:any}]
We may assume that $G$ is connected. If there are no Seifert circles of Type I\hspace{-.1em}I in $D$, $D$ is special alternating diagram which is obtained from $G$. It is clear by Theorem \ref{thm:main} that this statement holds. 

So we assume that there are Seifert circles of Type I\hspace{-.1em}I. Let $G=G_1*\cdots*G_n$, $n\ge2$, be the block sum decomposition of $G$. And let the link diagrams $D_1,\ldots,D_n$ be associated to $G_1,\ldots,G_n$. Since $D=D_1*\cdots *D_n$, we have $\T_{D}(v)=\T_{D_1}(v)\cdots\T_{D_n}(v)$. Moreover, since the diagrams $D_1,\ldots,D_n$ contain no Seifert circles of Type I\hspace{-.1em}I and $D_1=L_{G_1},\ldots,D_n=L_{G_n}$, from Theorem \ref{thm:main}, we have $\T_{D_1}(v)=v^{i_1}I^+_{G_1}(v^2),\ldots,\T_{D_n}(v)=v^{i_n}I^+_{G_n}(v^2)$, where the integers $i_1,\ldots,i_n$ are the suitable exponents from Theorem \ref{thm:main}. Write $G_k=(V_k,E_k,\mathcal{E}_+^k,\mathcal{E}_-^k)$, then we get $i_k=|\mathcal{E}_{+}^k|-|\mathcal{E}_{-}^k|-(|V_k|+|E_k|)+1$.  Therefore, using the formula of $*$-product and \ref{thm:murasugisum}
\begin{eqnarray*}
\T_{D}(v)
&=&\T_{D_1*\cdots*D_n}(v) \\
&=&\T_{D_1}(v)\cdots\T_{D_n}(v)\\
&=&v^{|\mathcal{E}_{+}^1|-|\mathcal{E}_{-}^1|-(|V_1|+|E_1|)+1}I^{+}_{G_1}\left(v^2 \right)\cdots v^{|\mathcal{E}_{+}^n|-|\mathcal{E}_{-}^n|-(|V_n|+|E_n|)+1}I^{+}_{G_n}\left(v^2 \right) \\
&=&v^{(|\mathcal{E}_{+}^1|+\cdots+|\mathcal{E}_{+}^n|)-(|\mathcal{E}_{-}^1|+\cdots+|\mathcal{E}_{-}^n|)-((|V_1|+\cdots+|V_n|)+(|E_1|+\cdots+|E_n|))+n}I^{+}_{G_1} \left(v^2 \right)\cdots I^{+}_{G_n}\left(v^2 \right)\\
&=&v^{|\mathcal{E}_{+}|-|\mathcal{E}_{-}|-(|V|+|E|+(n-1))+n}I^{+}_{G}\left(v^2 \right)\\
&=&v^{|\mathcal{E}_{+}|-|\mathcal{E}_{-}|-(|V|+|E|)+1}I^{+}_{G}\left(v^2 \right).
\end{eqnarray*}
This completes the proof.
\end{proof}

\subsection{Properties of the signed interior polynomial}
We show some properties of the interior polynomial for abstract signed bipartite graphs. Some of them are the same as for the original interior polynomial in \cite{K}.
\begin{proposition}
The signed interior polynomial of a signed bipartite graph has $1$ or $0$ for constant term depending on whether negative edges are present.
\end{proposition}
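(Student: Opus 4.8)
The plan is to evaluate both sides at $x=0$ and show that $I^+_G(0)$ equals $1$ or $0$ according to whether $\mathcal{E}_-(G)$ is empty. The whole argument reduces to a single binomial identity once we know the constant term of the ordinary interior polynomial.

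First I would recall from \cite{K} that the interior polynomial of any connected bipartite graph $H$ has constant term $I_H(0)=1$; equivalently, there is a unique hypertree of interior activity zero. This is the hypergraph analogue of the fact that $x^{|V|-1}T_G(1/x,1)$ has constant term $1$, coming from the unique spanning tree all of whose edges are internally active. Using this together with the definition of $I'$, for an arbitrary bipartite graph $H$ with components $H_1,\dots,H_{k(H)}$ one computes
\[
I'_H(0)=(1-0)^{k(H)-1}\prod_{i=1}^{k(H)}I_{H_i}(0)=1,
\]
since each factor equals $1$ (an isolated vertex also has interior polynomial $1$, as noted in the example following Lemma \ref{lem:disint}). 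Thus $I'_H(0)=1$ for every bipartite graph $H$, connected or not, and regardless of isolated vertices.

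Next I would substitute $x=0$ into the definition of the signed interior polynomial. Since $I'_{G\setminus\mathcal{S}}(0)=1$ for every $\mathcal{S}\subseteq\mathcal{E}_-(G)$, we obtain
\[
I^+_G(0)=\sum_{\mathcal{S}\subseteq\mathcal{E}_-(G)}(-1)^{|\mathcal{S}|}
=\sum_{j=0}^{|\mathcal{E}_-(G)|}\binom{|\mathcal{E}_-(G)|}{j}(-1)^j=(1-1)^{|\mathcal{E}_-(G)|}.
\]
This equals $0^{|\mathcal{E}_-(G)|}$, which is $1$ when $|\mathcal{E}_-(G)|=0$ and $0$ when $|\mathcal{E}_-(G)|\ge 1$, exactly the claimed dichotomy.

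The only genuinely nontrivial input is the constant-term-one fact for the unsigned interior polynomial; everything after that is the binomial theorem applied to the alternating sum over subsets of negative edges. The main point to be careful about is that deleting the edges in $\mathcal{S}$ may change the number of components and may create isolated vertices, so I would verify that $I'_{G\setminus\mathcal{S}}(0)=1$ holds uniformly in all these degenerate cases. It does, because the $(1-x)^{k-1}$ prefactor contributes $1$ at $x=0$ and every component, including a single isolated vertex, contributes a factor with constant term $1$.
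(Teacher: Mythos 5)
Your proof is correct and follows essentially the same route as the paper: both reduce the claim to the fact that $I'_{G\setminus\mathcal{S}}$ has constant term $1$ for every $\mathcal{S}$ (via \cite[Proposition 6.2]{K} and the definition of $I'$) and then evaluate the alternating sum as $(1-1)^{|\mathcal{E}_-(G)|}$. Your additional care about isolated vertices and changing component counts is a welcome but minor elaboration of the same argument.
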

\begin{proof}
When $|\mathcal{E}_{-}|=0$, we have $I^+_G=I'_G$. By \cite[Proposition 6.2]{K} and the definition of $I'(x)$, the constant term of $I'_G(x)$ is $1$.

When $|\mathcal{E}_{-}|=m>0$, then $\displaystyle I^{+}_{G}\left(x \right)=\sum_{\mathcal{S} \subseteq \mathcal{E}_{-}(G)}(-1)^{|\mathcal{S}|}I'_{G \setminus \mathcal{S}}\left(x \right)$. Here $I'_{G \setminus \mathcal{S}}$ always has $1$ for constant term. Then we compute the constant term as follows:
\[
\binom{m}{0} - \binom{m}{1} \pm \cdots +(-1)^{m} \binom{m}{m} =(1-1)^m=0.
\]
This completes the proof.
\end{proof}
\begin{theorem}\label{thm:dis}
Let $G_1$ and $G_2$ be signed bipartite graphs and $G_1 \cup G_2$ be the disjoint union of $G_1$ and $G_2$. Then
\[
I^+_{G_1 \cup G_2}(x)=(1-x)I^+_{G_1}(x)I^+_{G_2}(x).
\]
\end{theorem}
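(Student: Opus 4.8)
The plan is to reduce the signed identity to the unsigned one in Lemma~\ref{lem:disint} by expanding both sides through the defining alternating sum of $I^+$. The key structural fact I would exploit first is that the negative edges of a disjoint union split as $\mathcal{E}_-(G_1 \cup G_2) = \mathcal{E}_-(G_1) \sqcup \mathcal{E}_-(G_2)$. Consequently every subset $\mathcal{S} \subseteq \mathcal{E}_-(G_1 \cup G_2)$ factors uniquely as $\mathcal{S} = \mathcal{S}_1 \sqcup \mathcal{S}_2$ with $\mathcal{S}_i \subseteq \mathcal{E}_-(G_i)$, and deletion respects the disjoint union, that is, $(G_1 \cup G_2) \setminus \mathcal{S} = (G_1 \setminus \mathcal{S}_1) \cup (G_2 \setminus \mathcal{S}_2)$. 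This turns the single sum in the definition of $I^+_{G_1 \cup G_2}$ into a double sum over pairs $(\mathcal{S}_1, \mathcal{S}_2)$.

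Then I would apply Lemma~\ref{lem:disint} to each summand, writing
\[
I'_{(G_1 \setminus \mathcal{S}_1) \cup (G_2 \setminus \mathcal{S}_2)}(x) = (1-x)\, I'_{G_1 \setminus \mathcal{S}_1}(x)\, I'_{G_2 \setminus \mathcal{S}_2}(x),
\]
and use the sign splitting $(-1)^{|\mathcal{S}|} = (-1)^{|\mathcal{S}_1|}(-1)^{|\mathcal{S}_2|}$. The factor $(1-x)$ is constant in the summation variables, so it pulls out, and the double sum factors as a product of the two independent sums
\[
\Bigl( \sum_{\mathcal{S}_1 \subseteq \mathcal{E}_-(G_1)} (-1)^{|\mathcal{S}_1|} I'_{G_1 \setminus \mathcal{S}_1}(x) \Bigr) \Bigl( \sum_{\mathcal{S}_2 \subseteq \mathcal{E}_-(G_2)} (-1)^{|\mathcal{S}_2|} I'_{G_2 \setminus \mathcal{S}_2}(x) \Bigr),
\]
which are precisely $I^+_{G_1}(x)$ and $I^+_{G_2}(x)$ by definition. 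This yields $I^+_{G_1 \cup G_2}(x) = (1-x) I^+_{G_1}(x) I^+_{G_2}(x)$.

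I do not expect a genuine obstacle: the argument is a bookkeeping exercise built entirely on the bijection $\mathcal{S} \leftrightarrow (\mathcal{S}_1, \mathcal{S}_2)$ and the multiplicativity already recorded in Lemma~\ref{lem:disint}. The only point meriting care is confirming that deletion and disjoint union commute at the level of the unsigned graphs $G_i \setminus \mathcal{S}_i$, so that Lemma~\ref{lem:disint} genuinely applies term by term; this is immediate since the two components share no vertices or edges. One could alternatively run the proof by induction on $|\mathcal{E}_-(G_1)| + |\mathcal{E}_-(G_2)|$ using Lemma~\ref{lem:sumint}, but the direct factorization of the defining sum is cleaner and avoids a separate base-case verification.
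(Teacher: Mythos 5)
Your proposal is correct and is exactly the argument the paper intends: its proof of this theorem is the one-line remark ``Obvious from the definition of the signed interior polynomial and Lemma \ref{lem:disint}'', and your expansion of the defining sum via the splitting $\mathcal{S}=\mathcal{S}_1\sqcup\mathcal{S}_2$ followed by termwise application of Lemma \ref{lem:disint} is precisely the bookkeeping being elided. No gaps.
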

\begin{proof}
Obvious from the definition of the signed interior polynomial and Lemma \ref{lem:disint}
\end{proof}
Having multiple edges in a bipartite graph does not affect the induced hypergraphs and hence, in the unsigned case, does not affect the interior polynomial. In the signed case, multiple edges lead to some interesting phenomena. The third formula of the next proposition is a special case of the vanishing formula by the alternating cycle (see section \ref{sec:alt}).

\begin{proposition}\label{thm:nugatory}
From the signed bipartite graph $G$, construct another bipartite graph $G'$ by adding a new vertex that is connected to just one old vertex with $m\ge1$ new edges. Then
\[
\begin{array}{lllll}
I^+_{G'}(x)&=             &\hspace{-8pt}I^+_{G}(x)&(\mbox{if all the new edges are positive}),\\
I^+_{G'}(x)&=(-1)^{m+1}x  &\hspace{-8pt}I^+_{G}(x)&(\mbox{if all the new edges are negative}),\\
I^+_{G'}(x)&=             &\hspace{-8pt}0         &(\mbox{otherwise}).
\end{array}
\]
\end{proposition}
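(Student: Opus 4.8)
The plan is to realize $G'$ as a block sum and thereby reduce the entire statement to a computation on a two-vertex graph. Write $H$ for the signed bipartite graph consisting of the old vertex $u$ to which the new vertex is attached, the new vertex $w$, and the $m$ parallel edges joining them, carrying their given signs. Then $G'$ is exactly the block sum $G * H$ along the single shared vertex $u$: the colour of $u$ is inherited from $G$, and $w$ receives the opposite colour, so the identification is legitimate and $G'$ is again bipartite. Hence Theorem \ref{thm:murasugisum} applies and gives $I^+_{G'}(x) = I^+_{G}(x)\,I^+_{H}(x)$. It therefore suffices to show that $I^+_H(x)$ equals $1$, $(-1)^{m+1}x$, or $0$ in the three respective cases; multiplying through by $I^+_G(x)$ then yields the three formulas at once.

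Next I would compute $I^+_H(x)$ directly from the definition $I^+_H(x) = \sum_{\mathcal{S}\subseteq \mathcal{E}_-(H)}(-1)^{|\mathcal{S}|} I'_{H\setminus\mathcal{S}}(x)$. Two facts drive the calculation. First, multiple edges do not affect the unsigned interior polynomial, so whenever $H\setminus\mathcal{S}$ still contains at least one edge it is connected with $I'_{H\setminus\mathcal{S}}(x)=I_{H\setminus\mathcal{S}}(x)=1$ (the value for a single edge). Second, if every edge is deleted, then $H\setminus\mathcal{S}$ is a pair of isolated vertices, so by the definition of $I'$ (equivalently Lemma \ref{lem:disint}) one has $I'_{H\setminus\mathcal{S}}(x)=1-x$. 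In the all-positive case $\mathcal{E}_-(H)=\emptyset$ and $I^+_H=I'_H=1$. In the mixed case at least one positive edge survives every deletion, so $H\setminus\mathcal{S}$ stays connected for all $\mathcal{S}\subseteq\mathcal{E}_-(H)$; writing $q\ge 1$ for the number of negative edges gives $I^+_H(x)=\sum_{k=0}^{q}\binom{q}{k}(-1)^k=(1-1)^q=0$. In the all-negative case the only disconnected term is $\mathcal{S}=\mathcal{E}_-(H)$, and splitting it off using $\sum_{k=0}^{m}\binom{m}{k}(-1)^k=0$ yields $I^+_H(x)=\bigl(-(-1)^m\bigr)+(-1)^m(1-x)=(-1)^{m+1}x$.

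The step demanding the most care is the all-negative case, since it is the only one in which the disconnected contribution $1-x$ enters, and the cancellation must be tracked precisely: the connected terms sum to $-(-1)^m$ by the binomial identity, and this nearly cancels the $(-1)^m$ produced by $I'$ of the two-point graph, leaving exactly the factor $(-1)^{m+1}x$. I would also verify carefully that the hypotheses of Theorem \ref{thm:murasugisum} genuinely hold, namely that $G\cap H=\{u\}$ is a single vertex and the colour classes match, since it is this reduction that lets one bypass an induction on $m$ via Lemma \ref{lem:sumint}. Finally, it is worth remarking that the mixed case is consistent with, and is the simplest instance of, the alternating-cycle vanishing of Theorem \ref{thm:0}: one positive and one negative edge between $u$ and $w$ already constitute an alternating $2$-cycle.
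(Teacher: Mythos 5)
Your proposal is correct and follows essentially the same route as the paper: the paper likewise computes $I^+$ of the two-vertex graph with $i$ positive and $j$ negative parallel edges directly from the definition (getting $1$, $(-1)^{j+1}x$, and $0$ in the three cases, with the lone disconnected term contributing $1-x$ exactly as you track it) and then invokes Theorem \ref{thm:murasugisum} to multiply by $I^+_G$. Your closing remark that the mixed case is the simplest instance of the alternating-cycle vanishing also matches the paper's own comment preceding the proposition.
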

\begin{proof}
For any non-negative integers $i,j$, let $G_{(i,j)}$ be the signed bipartite graph with only two vertices, $n$ positive edges and $j$ negative edges. We compute the signed interior polynomial of $G_{(i,j)}$. When $j=0$, since the interior polynomial of the unsigned bipartite graph obtained from $G_{(i,0)}$ by forgetting signs is $1$, we have $I^+_{G_{(i,0)}}(x)=I_{G_{(i,0)}}=1$. When $i=0$, the signed bipartite graph $G_{(0,j)}$ has $j$ negative edges $\{ \epsilon_1, \epsilon_2, \cdots ,\epsilon_j \}$. Then the unsigned bipartite graph $G_{(0,j)} \setminus \{  \epsilon_1, \epsilon_2, \cdots ,\epsilon_j \}$ has two isolated vertices and the interior polynomial of it is $1-x$. Moreover the other bipartite graphs $G_{(i,j)} \setminus \{  \epsilon_{i_1}, \epsilon_{i_2}, \cdots ,\epsilon_{i_k} \}$ are connected and the interior polynomial of them is $1$. By the definition of the signed interior polynomial, we have 
\begin{eqnarray*}
I^{+}_{G_{(0,j)}}(x)
&=&I^+_{G_{(0,j)} \setminus \emptyset}(x)-I^+_{G_{(0,j)}\setminus \{ \epsilon_{1} \}}(x)-I^+_{G_{(0,j)} \setminus \{ \epsilon_{2} \}}(x)-\cdots -I^+_{G_{(0,j)} \setminus \{ \epsilon_{j} \}}(x)\\
&&\hspace{25pt}+I^+_{G_{(0,j)} \setminus \{ \epsilon_1,\epsilon_2 \}}(x)+\cdots+
I^+_{G_{(0,j)} \setminus \{ \epsilon_{j-1},\epsilon_j \}}(x)\pm \cdots +(-1)^j I^+_{G_{(i,j)} \setminus \{  \epsilon_1, \epsilon_2, \cdots ,\epsilon_j \}}(x) \\
&=&1-\binom{j}{1}+\binom{j}{2}\pm \cdots +(-1)^{j-1}\binom{j}{j-1} +{(-1)^j} \binom{j}{j}(1-x) \\
&=&(-1)^{j+1}x.
\end{eqnarray*}
When $i\ne0$ and $j\ne0$, since $G_{(i,j)} \setminus \{  \epsilon_{i_1}, \epsilon_{i_2}, \cdots ,\epsilon_{i_k} \}$ is always connected, its interior polynomial is $1$. Then all terms cancel and we get $I^+_{G_{(i,j)}}(x)=0$.

By Theorem \ref{thm:murasugisum}, we have $I^+_{G'}(x)=I^+_{G*G_{(i,j)}}(x)=I^+_G(x) I^+_{G_{(i,j)}}(x)$. By the above, we obtain the statement of the proposition.
\end{proof}

\begin{theorem}\label{thm:product}
Let $G_1=(V_1,E_1,\mathcal{E}_1)$ and $G_2=(V_2,E_2,\mathcal{E}_2)$ be signed bipartite graphs. Define the signed bipartite graph $G=(V_1\cup V_2,E_1\cup E_2,\mathcal{E}_1 \cup \mathcal{E}_2 \cup \{ \epsilon\})$, where $\epsilon$ is a new edge connecting two vertices which are chosen opportunely from each bipartite graph $G_1$ and $G_2$. Then
\[
\begin{array}{lllll}
I^+_{G}(x)&=&  &\hspace{-8pt}I^+_{G_1}(x) I^+_{G_2}(x)&(\epsilon\colon\mbox{positive}),\\
I^+_{G}(x)&=&x&\hspace{-8pt}I^+_{G_1}(x) I^+_{G_2}(x)&(\epsilon\colon\mbox{negative}).
\end{array}
\]
\end{theorem}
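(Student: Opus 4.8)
The plan is to recognise $G$ as an iterated block sum and then invoke Theorem \ref{thm:murasugisum}. Let $u$ and $w$ be the two endpoints of $\epsilon$, with $u$ a vertex of $G_1$ and $w$ a vertex of $G_2$; since $G$ is bipartite, one of them lies in a $V$-class and the other in an $E$-class. Introduce the auxiliary bipartite graph $G_\epsilon$ consisting of exactly the two vertices $u,w$ together with the single edge $\epsilon$, carrying its sign. Then $G$ is obtained by first block-summing $G_1$ with $G_\epsilon$ along $u$ and then block-summing the result with $G_2$ along $w$; that is, $G=(G_1 * G_\epsilon) * G_2$, where each $*$ glues along a single shared vertex ($G_1\cap G_\epsilon=\{u\}$ and $(G_1*G_\epsilon)\cap G_2=\{w\}$), so that Theorem \ref{thm:murasugisum} applies at each step.

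First I would apply Theorem \ref{thm:murasugisum} twice to obtain $I^+_{G}(x)=I^+_{G_1}(x)\,I^+_{G_\epsilon}(x)\,I^+_{G_2}(x)$, which reduces everything to computing $I^+_{G_\epsilon}(x)$. When $\epsilon$ is positive, $G_\epsilon$ is the graph $G_{(1,0)}$ in the notation of the proof of Proposition \ref{thm:nugatory}, and $I^+_{G_\epsilon}(x)=1$. When $\epsilon$ is negative, $G_\epsilon=G_{(0,1)}$, and the definition of the signed interior polynomial gives $I^+_{G_\epsilon}(x)=I'_{G_\epsilon}(x)-I'_{G_\epsilon\setminus\epsilon}(x)=1-(1-x)=x$, where the $(1-x)$ comes from the two isolated vertices left after deleting $\epsilon$, via Lemma \ref{lem:disint}. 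Substituting these two values immediately yields the two claimed formulas.

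As an alternative that avoids the block-sum machinery, one can argue straight from the definition. Writing $\mathcal{S}=\mathcal{S}_1\sqcup\mathcal{S}_2$ with $\mathcal{S}_i\subseteq\mathcal{E}_-(G_i)$ and noting that, for positive $\epsilon$, the graph $G\setminus\mathcal{S}$ is the bridge-join of $G_1\setminus\mathcal{S}_1$ and $G_2\setminus\mathcal{S}_2$, the $I'$-version of the block-sum identity established inside the proof of Theorem \ref{thm:murasugisum} gives $I'_{G\setminus\mathcal{S}}=I'_{G_1\setminus\mathcal{S}_1}\,I'_{G_2\setminus\mathcal{S}_2}$, whence the defining sum factors as $I^+_{G_1}(x)\,I^+_{G_2}(x)$. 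For negative $\epsilon$ the sum splits according to whether $\epsilon\in\mathcal{S}$: the terms with $\epsilon\notin\mathcal{S}$ again contribute $I^+_{G_1}(x)\,I^+_{G_2}(x)$, while in the terms with $\epsilon\in\mathcal{S}$ the bridge is deleted, so $G\setminus\mathcal{S}$ is a disjoint union and Lemma \ref{lem:disint} contributes a factor $(1-x)$; collecting the signs yields $\bigl(1-(1-x)\bigr)I^+_{G_1}(x)\,I^+_{G_2}(x)=x\,I^+_{G_1}(x)\,I^+_{G_2}(x)$.

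The only genuine subtlety, and the step I would check most carefully, is the bookkeeping in the negative case: one must confirm that deleting the bridge really disconnects $G\setminus\mathcal{S}$ into exactly the two pieces, so that the factor $(1-x)$ from Lemma \ref{lem:disint} appears precisely once, and that the signs combine to $1-(1-x)=x$ rather than to some other expression. In the block-sum route this subtlety is entirely absorbed into the elementary evaluation $I^+_{G_{(0,1)}}(x)=x$, which is why I would present that route as the main argument and mention the direct computation only as a sanity check.
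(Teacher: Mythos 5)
Your main argument is exactly the paper's proof: the paper disposes of this theorem with ``Immediate from Theorem \ref{thm:murasugisum} and a special case of Proposition \ref{thm:nugatory},'' which is precisely your block-sum decomposition $G=(G_1*G_\epsilon)*G_2$ together with the evaluations $I^+_{G_{(1,0)}}(x)=1$ and $I^+_{G_{(0,1)}}(x)=x$ from Proposition \ref{thm:nugatory}. Your write-up is correct, and the direct computation you offer as a sanity check is a reasonable (if redundant) supplement.
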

\begin{proof}
Immediate from Theorem \ref{thm:murasugisum} and a special case of Proposition \ref{thm:nugatory}.
\end{proof}

If $v$ is a vertex in the bipartite graph $G$, then let $G-v$ be the bipartite graph obtained from $G$ by removing $v$ and all edges are incident to it. Moreover let $G/v$ be the bipartite graph obtained from $G-v$ by identifying vertices contained in edges which are adjacent to $v$.
\begin{proposition}
Let $G$ be a signed bipartite graph that contains a vertex $v$ which only $k$ positive edges and $l$ negative edges are incident to so that $G-v$ has $k+l$ connected components. Then $I^+_G(x)=x^lI^+_{G/v}(x)$.
\end{proposition}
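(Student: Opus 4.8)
The plan is to realise both $G$ and $G/v$ as block sums glued at a single vertex and then to reduce everything to the two tools already available, namely Theorem~\ref{thm:murasugisum} (multiplicativity of $I^+$ under block sum) and Proposition~\ref{thm:nugatory} (attaching a pendant vertex by parallel edges). First I would fix notation: let $H_1,\dots,H_{k+l}$ be the connected components of $G-v$, let $\epsilon_i$ denote the edge of $G$ joining $v$ to $H_i$, and let $w_i\in H_i$ be the other endpoint of $\epsilon_i$. The key structural point is that the hypotheses force each component to be attached to $v$ through exactly one edge: $v$ carries precisely $k+l$ incident edges, while $G-v$ splits into precisely $k+l$ pieces, so (assuming, as in the proof of Theorem~\ref{thm:any}, that $G$ is connected) the edges at $v$ must land in $k+l$ distinct components, one apiece. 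Of these edges, $k$ are positive and $l$ are negative.

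Next I would write $G$ as a block sum. Let $G_i$ be the subgraph of $G$ spanned by $v$, the edge $\epsilon_i$, and the component $H_i$. For $i\neq j$ the subgraphs $G_i$ and $G_j$ meet only in the vertex $v$, so $G=G_1*\cdots*G_{k+l}$ with all blocks identified at $v$. Since block sum is associative and each pairwise intersection is the single vertex $v$, iterating Theorem~\ref{thm:murasugisum} gives
\[
I^+_{G}(x)=\prod_{i=1}^{k+l} I^+_{G_i}(x).
\]
Each factor is then computed by Proposition~\ref{thm:nugatory}. Inside $G_i$ the vertex $v$ is a pendant, attached to the single old vertex $w_i$ of $H_i$ by one edge, i.e.\ the case $m=1$. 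Hence $I^+_{G_i}(x)=I^+_{H_i}(x)$ when $\epsilon_i$ is positive, and $I^+_{G_i}(x)=(-1)^{1+1}x\,I^+_{H_i}(x)=x\,I^+_{H_i}(x)$ when $\epsilon_i$ is negative. Collecting the contributions of the $l$ negative edges yields
\[
I^+_{G}(x)=x^{l}\prod_{i=1}^{k+l} I^+_{H_i}(x).
\]

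Finally I would identify the remaining product with $I^+_{G/v}$. By construction $G/v$ is obtained from the disjoint components $H_1,\dots,H_{k+l}$ by identifying their marked vertices $w_1,\dots,w_{k+l}$ into a single vertex $w$, so $G/v=H_1*\cdots*H_{k+l}$, now glued at $w$. A further application of Theorem~\ref{thm:murasugisum} gives $I^+_{G/v}(x)=\prod_{i=1}^{k+l} I^+_{H_i}(x)$, and combining this with the previous display yields $I^+_G(x)=x^{l}I^+_{G/v}(x)$, as required.

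The main obstacle will not be any computation but the bookkeeping in the first step: one must confirm that the numerical hypothesis (degree $k+l$ at $v$, and $k+l$ components in $G-v$) genuinely forces the star structure in which every component is joined to $v$ by a single edge, so that the blocks $G_i$ and the pieces $H_i$ really do meet only in one vertex. Once this is established, the claim is an immediate consequence of Theorem~\ref{thm:murasugisum} and the $m=1$ case of Proposition~\ref{thm:nugatory}; if one wished to drop the connectivity assumption on $G$, any free components of $G-v$ not meeting $v$ could first be split off using Theorem~\ref{thm:dis}.
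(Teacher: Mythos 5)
Your proof is correct and follows exactly the route the paper intends: the paper's own proof consists of the single line ``Immediate from Theorem~\ref{thm:murasugisum} and a special case of Proposition~\ref{thm:nugatory},'' and you have simply supplied the details, namely the block-sum decomposition $G=G_1*\cdots*G_{k+l}$ at $v$, the $m=1$ case of Proposition~\ref{thm:nugatory} for each block, and the identification $G/v=H_1*\cdots*H_{k+l}$. Your closing remark about verifying that each component of $G-v$ meets $v$ in exactly one edge (which holds when $G$ is connected) is the right point to be careful about, and your treatment of it is fine.
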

\begin{proof}
Immediate from Theorem \ref{thm:murasugisum} and a special case of Proposition \ref{thm:nugatory}.
\end{proof}
The following theorem is a signed version of the classical deletion-contraction formula. The formula covers the case of a bridge but dose not cover the case of a loop.
\begin{theorem}\label{thm:delcont}
Let $G$ be a signed bipartite graph with a vertex $v$ which is incident to only two edges $\epsilon_1,\epsilon_2$. Assume that $\epsilon_1,\epsilon_2$ are not parallel. Then
\[
\begin{array}{llllll}
I^+_G(x)&=&     &\hspace{-8pt}I^+_{G-v}(x)+&\hspace{-8pt}x I^+_{G/v}(x) &(\epsilon_1,\epsilon_2\colon\mbox{positive}), \\

I^+_G(x)&=&-x &\hspace{-8pt}I^+_{G-v}(x)+&\hspace{-8pt}x I^+_{G/v}(x) &(\epsilon_1,\epsilon_2\colon\mbox{negative}), \\
I^+_G(x)&=&     &  \hspace{-8pt}        &\hspace{-8pt}x I^+_{G/v}(x) &(\epsilon_1\colon\mbox{positive},\, \epsilon_2\colon\mbox{negative}). \\
\end{array}
\]
\end{theorem}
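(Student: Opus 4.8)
The plan is to peel off the signs with the tools already developed and to concentrate all the real content in a single unsigned identity. Write $a,b$ for the endpoints other than $v$ of $\epsilon_1,\epsilon_2$; they are distinct because $\epsilon_1,\epsilon_2$ are not parallel. Both $H_1:=G-v$ and $H_2:=G/v$ delete $\epsilon_1,\epsilon_2$ and forget their signs, so they are unchanged regardless of the signs of those two edges; by Lemma \ref{lem:disint} I may also assume $G$ connected, absorbing the components not meeting $v$ into a common factor on both sides. Denoting by $G_{s_1s_2}$ the graph $G$ with $\epsilon_1,\epsilon_2$ given signs $s_1,s_2\in\{+,-\}$ (all other signs fixed), the goal becomes the all--positive identity $I^{+}_{G_{++}}(x)=I^{+}_{H_1}(x)+x\,I^{+}_{H_2}(x)$, from which the remaining two cases will drop out.

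Granting that, the mixed and all--negative cases follow from Lemma \ref{lem:sumint} and Proposition \ref{thm:nugatory}. For $G_{+-}$, Lemma \ref{lem:sumint} applied to $\epsilon_2$ gives $I^{+}_{G_{+-}}=I^{+}_{G_{++}}-I^{+}_{G_{+-}\setminus\epsilon_2}$; in $G_{+-}\setminus\epsilon_2$ the vertex $v$ hangs by the single positive edge $\epsilon_1$, so the positive $m=1$ case of Proposition \ref{thm:nugatory} gives $I^{+}_{G_{+-}\setminus\epsilon_2}=I^{+}_{H_1}$, whence $I^{+}_{G_{+-}}=x\,I^{+}_{H_2}$. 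For $G_{--}$, Lemma \ref{lem:sumint} on $\epsilon_2$ gives $I^{+}_{G_{--}}=I^{+}_{G_{-+}}-I^{+}_{G_{--}\setminus\epsilon_2}$, where now $v$ is a negative pendant, so $I^{+}_{G_{--}\setminus\epsilon_2}=x\,I^{+}_{H_1}$ by the negative $m=1$ case; a further application of Lemma \ref{lem:sumint} to $\epsilon_1$ yields $I^{+}_{G_{-+}}=I^{+}_{G_{++}}-I^{+}_{H_1}$, and assembling these gives $I^{+}_{G_{--}}=-x\,I^{+}_{H_1}+x\,I^{+}_{H_2}$. These are exactly the stated formulas.

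It remains to prove the all--positive identity. Since $\epsilon_1,\epsilon_2$ are positive, they lie outside the index set $\mathcal{E}_-(G_{++})$ of the alternating sum defining $I^{+}$, and for every $\mathcal{S}\subseteq\mathcal{E}_-(G_{++})$ the deletions commute with the local operations at $v$: one has $(G_{++}\setminus\mathcal{S})-v=H_1\setminus\mathcal{S}$ and $(G_{++}\setminus\mathcal{S})/v=H_2\setminus\mathcal{S}$, with $\mathcal{E}_-(H_1)=\mathcal{E}_-(H_2)=\mathcal{E}_-(G_{++})$, so the three sums run over the same subsets $\mathcal{S}$. Summing over $\mathcal{S}$, the all--positive identity reduces term by term to the purely \emph{unsigned} deletion--contraction $I'_{G'}(x)=I'_{G'-v}(x)+x\,I'_{G'/v}(x)$ for a degree--two vertex $v$ with non--parallel edges in a (possibly disconnected) unsigned bipartite graph $G'$.

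This unsigned identity is the crux, and I expect it to require the combinatorial definition of the interior polynomial rather than the formal steps above. By invariance of $I'$ under abstract duality \cite{KP}, I may take $v$ to be a size--two hyperedge $\{a,b\}$. If $v$ is a cut vertex, so that $G'-v=A\sqcup B$ with $a\in A$, $b\in B$, then $G'$ is a block sum through $v$ and $G'/v=A*B$; using pendant invariance (Proposition \ref{thm:nugatory}), the block--sum multiplicativity $I_{A*B}=I_AI_B$ of \cite{K} (see Theorem \ref{thm:murasugisum}), and the factor $(1-x)$ built into the definition of $I'$, both sides equal $I_A(x)I_B(x)$. The genuinely new part is the connected case, where $G'-v$ stays connected; there I would argue directly by splitting the hypertrees of $G'$ according to the value $f(v)\in\{0,1\}$: the hypertrees with $f(v)=0$ restrict to those of $G'-v$ with unchanged interior activity, while those with $f(v)=1$ match the hypertrees of $G'/v$ with activity raised by one, which accounts for the factor $x$. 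An alternative I would keep in reserve, matching the methods of section \ref{sec:ehr}, is to read the identity off from a subdivision of the root polytope $Q_{G'}$ into the polytopes governing $G'-v$ and $G'/v$ and to compare their Ehrhart data as in \cite{KP}. As a first consistency check I would confirm that everything specializes to the classical Tutte relation $T_{G}=T_{G-e}+T_{G/e}$ when all hyperedges have size two.
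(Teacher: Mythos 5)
Your proposal follows essentially the same route as the paper: reduce the mixed and all-negative cases to the all-positive one via Lemma \ref{lem:sumint} together with the pendant-vertex formulas of Proposition \ref{thm:nugatory}, and reduce the all-positive case term by term (the deletions of $\mathcal{S}\subseteq\mathcal{E}_-$ commuting with the operations at $v$) to the unsigned identity $I'_{G'}=I'_{G'-v}+x\,I'_{G'/v}$. Your sign bookkeeping checks out and reproduces the paper's computation, just organized slightly differently (the paper expands $I^+_G$ into four terms by applying Lemma \ref{lem:sumint} twice and then collapses them; you peel off one edge at a time). The only genuine divergence is at the crux you correctly isolated: for the unsigned deletion--contraction at a degree-two vertex when $G'-v$ stays connected, the paper simply invokes \cite[Proposition 6.14]{K}, whereas you offer an unexecuted sketch (``I would argue directly by splitting the hypertrees\dots''). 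The sketch is the right idea, but the claim that internal inactivity is unchanged for hypertrees with $f(v)=0$ and raised by exactly one for those with $f(v)=1$ is precisely the nontrivial content of that proposition and needs the activity-transfer arguments of \cite{K}; as written it is an assertion, not a proof. So your argument is complete modulo replacing that sketch by the citation (or by an actual proof); the cut-vertex case you handle by pendant invariance and block-sum multiplicativity exactly as the paper does in its disconnected subcase.
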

\begin{proof}
First, for any unsigned bipartite graph, we show that $I'_G(x)=I'_{G-v}(x)+x I'_{G/v}(x)$. When $G-v$ is connected, $I_G(x)=I_{G-v}(x)+x I_{G/v}(x)$ from \cite[Proposition 6.14]{K}. Thus $I'_G(x)=I'_{G-v}(x)+x I'_{G/v}(x)$ holds. When $G-v$ is disconnected, We take the connected component $G_1$, $G_2$ such that $G-v=G_1\cup G_2$. From Lemma \ref{lem:disint}, we have $I'_{G-v}(x)=(1-x)I'_{G_1}(x)I'_{G_2}(x)$. On the other hand, the bipartite graph $G/v$ is $G_1*G_2$, which is obtained from $G_1$ and $G_2$ by identifying vertices contained in edges which connect with $v$ in $G$. From an unsigned case of Theorem \ref{thm:murasugisum}, we have $I'_{G/v}(x)=I'_{G_1*G_2}=I'_{G_1}(x)I'_{G_2}(x)$. From an unsigned case of Theorem \ref{thm:product}, we have $I'_G(x)=I'_{G_1}(x)I'_{G_2}(x)$. Therefore, we have
\[
\begin{array}{llllll}
I'_{G-v}(x)+x I'_{G/v}(x)&=&(1-x)I'_{G_1}(x)I'_{G_2}(x)+x I'_{G_1}(x)I'_{G_2}(x)\\
                         &=&I'_{G_1}(x)I'_{G_2}(x) \\
                         &=&I'_{G}(x).
\end{array}
\]
Therefore when $G-v$ is disconnected, we get $I'_G(x)=I'_{G-v}(x)+x I'_{G/v}(x)$.

The first formula$\colon$By the above, for any negative edge set $\mathcal{S}\subset \mathcal{E}_{-}(G)$, we have $I'_{G\setminus \mathcal{S}}=I'_{(G\setminus \mathcal{S})-v}+x I'_{(G\setminus \mathcal{S})/v}$. Therefore,
\begin{eqnarray*}
I^{+}_{G}(x)
&=&\sum_{\mathcal{S} \subseteq \mathcal{E}_{-}(G)}(-1)^{|\mathcal{S}|}I'_{G \setminus \mathcal{S}}(x) \\
&=&\sum_{\mathcal{S} \subseteq \mathcal{E}_{-}(G)}(-1)^{|\mathcal{S}|}\left(I'_{(G\setminus \mathcal{S})-v}(x)+x I'_{(G\setminus \mathcal{S})/v}(x)\right) \\
&=&\sum_{\mathcal{S} \subseteq \mathcal{E}_{-}(G)}(-1)^{|\mathcal{S}|}\left(I'_{(G-v)\setminus \mathcal{S}}(x)+x I'_{(G/v)\setminus \mathcal{S}}(x)\right) \\
&=&I^+_{G-v}(x)+x I^+_{G/v}(x).
\end{eqnarray*}

The second formula$\colon$From Lemma \ref{lem:sumint}, we have 
\begin{eqnarray*}
I^{+}_{G}(x)
&=&I^{+}_{G+\epsilon_1}(x)-I^{+}_{G\setminus \epsilon_1}(x)\\
&=&I^{+}_{G+\epsilon_1+\epsilon_2}(x)-I^{+}_{G+\epsilon_1\setminus \epsilon_2}(x)-I^{+}_{G+\epsilon_2\setminus \epsilon_1}(x)+I^{+}_{G\setminus \epsilon_1\setminus \epsilon_2}(x).
\end{eqnarray*}
By the first formula, we have $I^{+}_{G+\epsilon_1+\epsilon_2}(x)=I^{+}_{G-v}(x)+x I^+_{G/v}(x)$. From Proposition \ref{thm:nugatory}, we have $I^{+}_{G+\epsilon_1\setminus \epsilon_2}(x)=I^{+}_{G-v}(x)$ and $I^{+}_{G+\epsilon_2\setminus \epsilon_1}(x)=I^{+}_{G-v}(x)$. Since the signed bipartite graph $G\setminus \epsilon_1\setminus \epsilon_2$ is $G-v$ with an isolated vertex, we have $I^{+}_{G\setminus \epsilon_1\setminus \epsilon_2}(x)=(1-x)I^{+}_{G-v}(x)$. Therefore we have
\begin{eqnarray*}
I^{+}_{G}(x)
&=&I^{+}_{G-v}(x)+x I^+_{G/v}(x) -I^{+}_{G-v}(x)-I^{+}_{G-v}(x)+(1-x)I^{+}_{G-v}(x) \\
&=&-x I^+_{G-v}(x)+x I^+_{G/v}(x).
\end{eqnarray*}

The third formula$\colon$A similar argument yields. We have
\begin{eqnarray*}
I^{+}_{G}(x)
&=&I^{+}_{G+\epsilon_2}(x)-I^{+}_{G\setminus \epsilon_2}(x) \\
&=&I^{+}_{G-v}(x)+x I^+_{G/v}(x) -I^{+}_{G-v}(x) \\
&=&x I^+_{G/v}(x).
\end{eqnarray*}
This completes the proof.
\end{proof}

\section{The Ehrhart polynomial}\label{sec:ehr}
\subsection{Preliminaries}
In \cite{KP}, the interior polynomial of an unsigned bipartite graph $G$ is shown to be equivalent to the Ehrhart polynomial of the root polytope of $G$. We review some details. The root polytope of the bipartite graph is defined as follows.
\begin{definition}
Let $G=(V,E,\mathcal{E})$ be a bipartite graph. For $e \in E$ and $v \in V$, let {\bf e} and {\bf v} denote the corresponding standard generators of $\mathbb{R}^E \oplus \mathbb{R}^V$. Define the root polytope of $G$ by
\[
Q_G=\conv \{\, {\bf e} + {\bf v} \,|\, ev \mbox{ is an edge of }G \,\},
\]
where $\conv$ denotes the convex hull.
\end{definition}
See Figure \ref{k23} and \ref{rp} for an example.
\begin{figure}[htbp]
\begin{center}
\begin{tabular}{cc}\hspace{-15pt}
\begin{minipage}{0.5\hsize}
\begin{center}
\includegraphics[width=3.5cm]{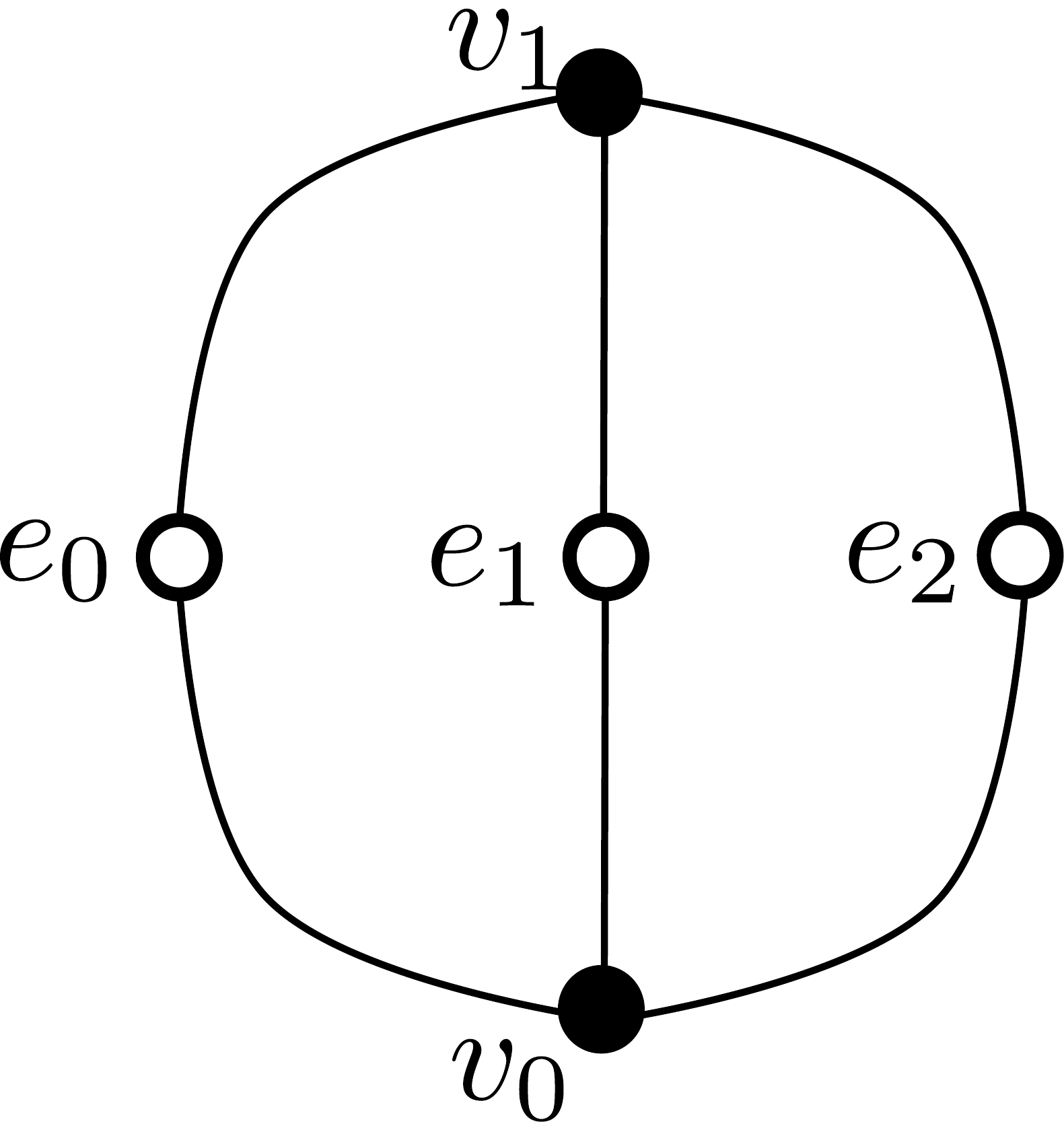}
\caption{The bipartite graph $K_{23}$.}\label{k23}
\end{center}
\end{minipage}
\begin{minipage}{0.55\hsize}
\begin{center}
\includegraphics[width=3.5cm]{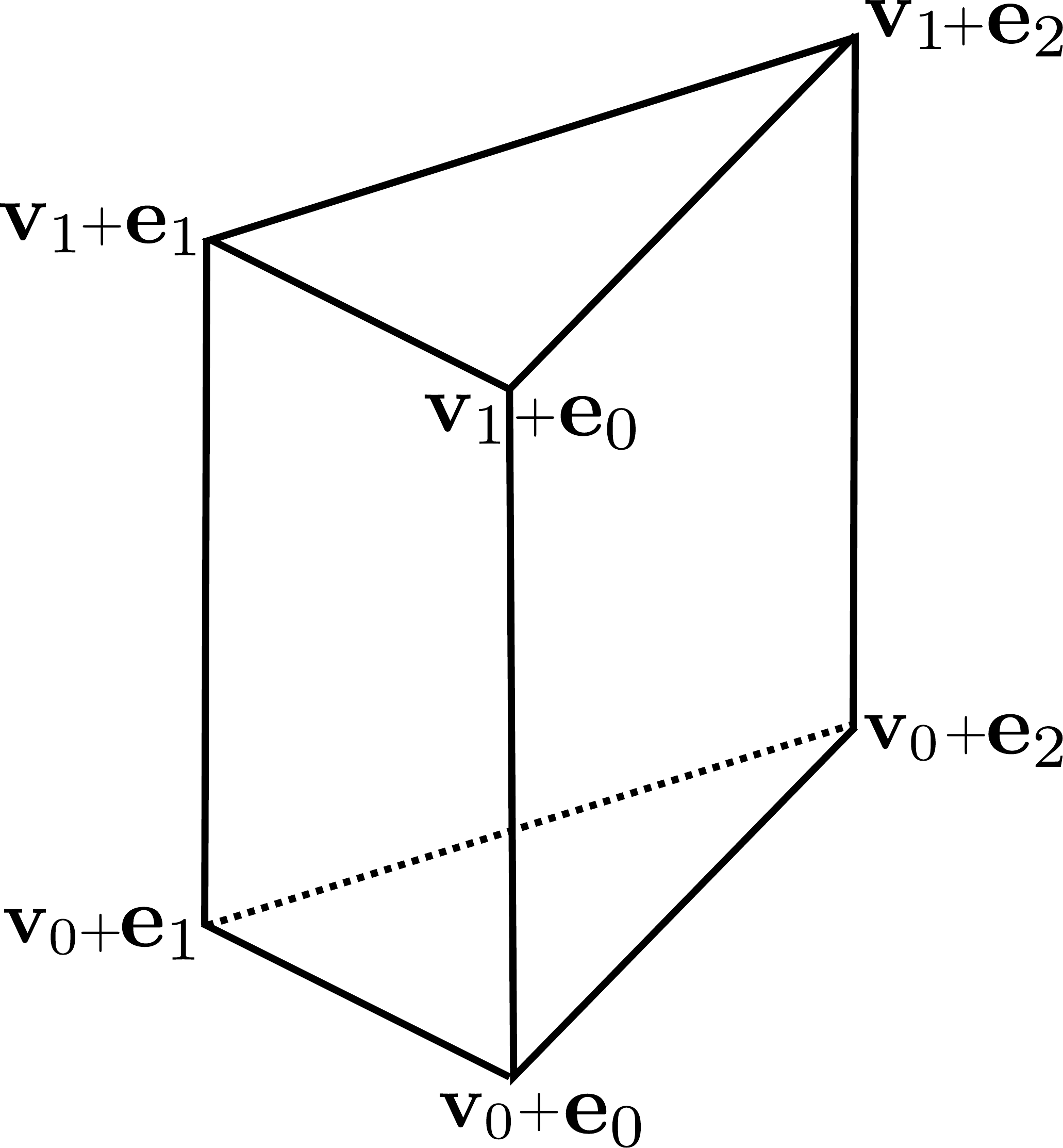}
\caption{The root polytope of $K_{23}$.}\label{rp}
\end{center}
\end{minipage}
\end{tabular}
\end{center}
\end{figure}

We will denote by $\bm{\epsilon}\in \mathbb{R}^E \oplus \mathbb{R}^V$ the vertex in $Q_G$ corresponding to the edge $\epsilon \in \mathcal{E}$. Thus, for any ${\bf x} \in Q_G$, we get $\displaystyle {\bf x}=\sum_{\epsilon \in \mathcal{E}}\lambda_{\epsilon}\bm{\epsilon}$, where each $\lambda_{\epsilon}\ge 0$ and $\displaystyle \sum_{\epsilon \in \mathcal{E}}\lambda_{\epsilon}=1$. We regard $\lambda_\epsilon$ as the weight of the edge $\epsilon$ in $G$, but note that the weights for a given ${\bf x} \in Q_G$ are not unique because $\{ \bm{\epsilon}\}_{\epsilon \in \mathcal{E}}$ is not basis.
\begin{definition}\label{def:weight}
If the map $\mathcal{W} \colon \mathcal{E}\to \mathbb{R}$ satisfies the following conditions, we call $\mathcal{W}$ a weight system.
\begin{enumerate}
\item For any $\epsilon \in \mathcal{E}$, we have $\mathcal{W}(\epsilon) \ge 0$. \\
\item $\displaystyle \sum_{\epsilon \in \mathcal{E}}\mathcal{W}(\epsilon)=1$.
\end{enumerate}
\end{definition}
Then any weight system $\mathcal{W}$ represents $\displaystyle {\bf x}=\sum_{\epsilon \in \mathcal{E}}\mathcal{W}(\epsilon)\bm{\epsilon} \in Q_G$. Two weight systems of $G$ represent the same point of $Q_G$ if and only if the sums of the weights around each  vertex of $G$ are the same.
\begin{example}\label{ex:cycchange}
Let $G$ be a bipartite graph which contains a cycle.  In the weight system $\mathcal{W}$, the weights along the cycle are non-negative real numbers $\mu_1,\lambda_1,\mu_2,\lambda_2,\cdots,\mu_n,\lambda_n$ as in Figure \ref{fig:W}. Let $\lambda=\min\{ \lambda_1,\cdots,\lambda_n\}$. The map $\mathcal{W}'$ is obtained from $\mathcal{W}$ by adding and subtracting  $\lambda$ alternately as in Figure \ref{fig:W2}. Because $\mathcal{W}'$ satisfies the conditions in Definition \ref{def:weight}, the map $\mathcal{W}'$ is a weight system of $G$. Notice that the sums of the weights of $\mathcal{W}'$ around each vertex of $G$ are the same as for $\mathcal{W}$. Thus, the two weight systems represent the same point in $Q_G$.
\begin{figure}[htbp]
\begin{center}
\begin{tabular}{cc}
\begin{minipage}{0.49\hsize}
\begin{center}
\includegraphics[height=3.5cm]{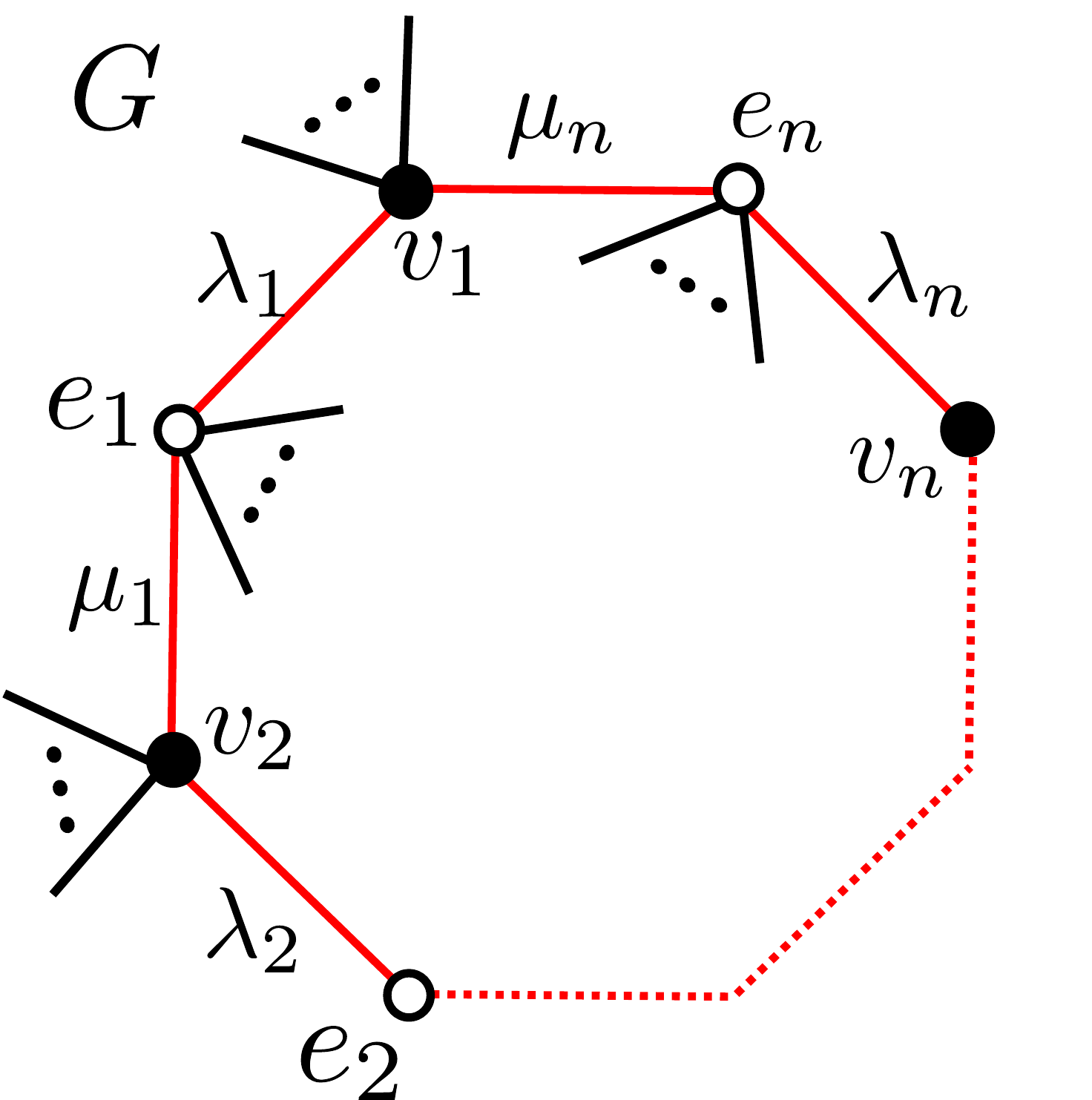}
\caption{The weight system $\mathcal{W}$.}
\label{fig:W}
\end{center}
\end{minipage}
\begin{minipage}{0.51\hsize}
\begin{center}
\includegraphics[height=3.5cm]{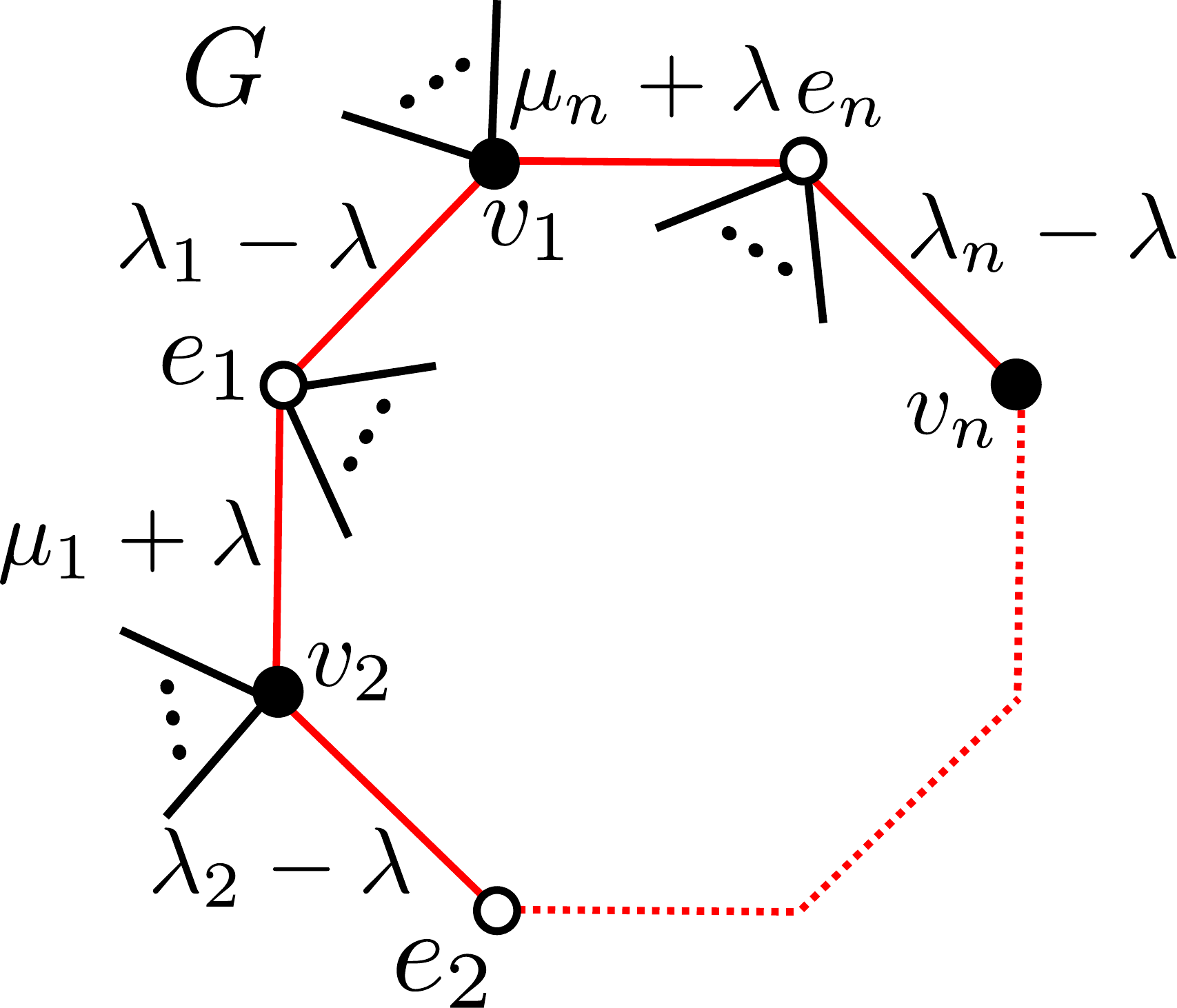}
\caption{The weight system $\mathcal{W}'$.}
\label{fig:W2}
\end{center}
\end{minipage}
\end{tabular}
\end{center}
\end{figure}
\end{example}
We call the change from $\mathcal{W}$ to $\mathcal{W}'$ described in Example \ref{ex:cycchange} a cycle change.
\begin{definition}
Let $G=(V,E,\mathcal{E})$ be a bipartite graph and $Q_G$ be the root polytope of $G$. For any non-negative integer $s$, the Ehrhart polynomial is defined by
\[
\varepsilon_{Q_G}(s) = |(s\cdot Q_G) \cap (\mathbb{Z}^{E}\oplus\mathbb{Z}^{V})|.
\]
\end{definition}
In general, for any polytope $P$, the analogously defined $\varepsilon_{P}(s)$ is not polynomial. However, for a convex polytope $P$ whose vertices are integer points, $\varepsilon_{P}(s)$ is a polynomial. Thus, $\varepsilon_{Q_G}(s)$ is a polynomial.
Moreover, for any lattice polytope, the degree of the Ehrhart polynomial is the dimension of the polytope. Since the dimension of the root polytope $Q_G$ of a bipatite graph with at least one edge is less than or equal to $|E|+|V|-2$, the degree of the Ehrhart polynomial $\varepsilon_{Q_G}(s)$ is less than or equal to $|E|+|V|-2$. The binomial coefficients $\left\{ \binom{s+|E|+|V|-2-k}{|E|+|V|-2} \right\}_{k=0}^{|E|+|V|-2}$ form a basis of the space {\boldmath$P$}$_{|E|+|V|-2}$ of polynomials with rational coefficients and of degree no greater than $|E|+|V|-2$ (see \cite[Lemma 3.8]{KP}). So we have unique rational numbers $\{a_k\}_{k=0}^{|E|+|V|-2}$ such that
\[
\varepsilon_{Q_G}(s)=\sum_{k=0}^{|E|+|V|-2}a_k\binom{s+|E|+|V|-2-k}{|E|+|V|-2}.
\]
Note that, for the single-vertex graph without edges, the root polytope is the empty set and the Ehrhart polynomial is zero. With this setup, K\'alm\'an and Postnikov prove the following theorem.
\begin{theorem}[\cite{KP}]\label{thm:ehrint}
Let $\varepsilon_{Q_G}(s) = \displaystyle\sum_{k=0}^{|E|+|V|-2}a_k\binom{s+|E|+|V|-2-k}{|E|+|V|-2}$ be the Ehrhart polynomial of the root polytope of a connected bipartite graph $G=(V,E,\mathcal{E})$ with at least one edge. Then either hypergraph induced by $G$ has the interior polynomial
\[
I_G(x)=a_0x^0+a_1x^1+\cdots+a_{|E|+|V|-2}x^{|E|+|V|-2}.
\]
\end{theorem}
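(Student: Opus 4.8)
The plan is to prove that the $h^*$-vector of the root polytope $Q_G$ --- that is, the coefficient sequence $(a_0,\dots,a_d)$ of $\varepsilon_{Q_G}$ in the binomial basis, with $d=|E|+|V|-2$ --- coincides with the coefficient sequence of the interior polynomial $I_G$. I would split this into two largely independent tasks: first, a lattice-point model that reorganizes the Ehrhart count according to hypertrees; and second, a unimodular, shellable triangulation of $Q_G$ that lets me read the $h^*$-vector as an activity statistic and match it against the internal inactivity that defines $I_G$.

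For the first task I would lean on the weight-system description of Definition \ref{def:weight}. After clearing denominators, a lattice point of the $s$-th dilate $sQ_G$ is represented by an \emph{integral} weight system $\mathcal{W}\colon\mathcal{E}\to\Z_{\ge 0}$ with $\sum_{\epsilon}\mathcal{W}(\epsilon)=s$, and two such systems represent the same point of $sQ_G$ exactly when they induce equal vertex sums. The cycle changes of Example \ref{ex:cycchange}, taken with integral $\lambda$, generate this equivalence, so every lattice point has a canonical acyclically supported representative. The map that records, for each hyperedge $e\in E$, the total weight around $e$ then sends the lattice points of $sQ_G$ onto integer points of a dilate of the hypertree polytope of $\mathscr{H}$, and I would show that the fiber over a fixed hypertree is itself a lattice polytope (a product of simplices arising from the $V$-side freedom in redistributing weights modulo cycles). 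Counting these fibers is precisely the mechanism that turns $\varepsilon_{Q_G}(s)$ into a sum $\sum_k a_k\binom{s+d-k}{d}$ whose coefficients $a_k$ are indexed by hypertrees.

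For the second task I would invoke Postnikov's triangulation of $Q_G$ whose maximal simplices correspond to the spanning trees of $G$ and are unimodular; fixing an order on $\mathcal{E}$ renders this triangulation shellable, and by Stanley's theorem the $h^*$-vector then equals the $h$-vector of the shelling, namely a generating function of spanning trees weighted by internal activity. The remaining --- and genuinely hard --- step is to show that this tree-activity statistic descends to hypertrees and there agrees with K\'alm\'an's internal inactivity, so that $\sum_k a_k x^k = I_G(x)$. The main obstacle is exactly the bookkeeping that distinct spanning trees collapsing to the same hypertree contribute consistently to each $h^*_k$; controlling it requires the cycle-change moves from the first task together with the hypergraph duality $\mathscr{H}\leftrightarrow\overline{\mathscr{H}}$, and it is this activity-matching that forms the substance of the cited result.
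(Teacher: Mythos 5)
First, note that the paper does not prove Theorem \ref{thm:ehrint} at all: it is imported verbatim from \cite{KP}, as the attribution in the theorem header indicates, and is used as a black box. So there is no in-paper proof to compare against; what can be assessed is whether your sketch would actually establish the cited result, and as written it would not.

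Your second task correctly identifies the machinery of \cite{KP} (a unimodular triangulation of $Q_G$ indexed by spanning trees of $G$, shellability, and Stanley's identification of the $h^*$-vector with the $h$-vector of the shelling), but you then explicitly defer the decisive step --- showing that the shelling statistic descends to hypertrees and coincides with K\'alm\'an's internal inactivity --- calling it ``the substance of the cited result.'' That step \emph{is} the theorem; a proposal that stops there is a plan, not a proof. Your first task has a more concrete defect: the coefficients $a_k$ in the binomial basis are the $h^*$-vector of $Q_G$, and these are not produced by counting fibers of the projection $\mathbb{R}^E\oplus\mathbb{R}^V\to\mathbb{R}^E$ over lattice points. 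That projection sends $s\cdot Q_G$ into the $s$-th dilate of a simplex in $\mathbb{R}^E$ (the vertex sums around hyperedges total $s$), not into a dilate of the hypertree polytope, which sits at level $|V|-1$; and even granting a stratification of lattice points by fibers, summing fiber cardinalities gives a formula for $\varepsilon_{Q_G}(s)$ as a polynomial in $s$, with no mechanism singling out the expansion in the basis $\binom{s+d-k}{d}$ or attaching an index $k$ (an activity) to each hypertree. The integral-weight-system and cycle-change observations in that part are sound (the paper itself uses them in the proof of Lemma \ref{thm:serdecompos}), but they do not carry the weight you assign them. In short, the only route in your sketch that could work is the triangulation route, and its essential content is left open.
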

In particular, the coefficients $a_k$ are non-negative integers, with several zeros at the end (after $k=\min\{|E|-1,|V|-1\}$, cf.\ \cite[Proposition 6.1]{K}).

We introduce a new description of the equivalence of the interior polynomial and the Ehrhart polynomial. Moreover, the following theorem is true for a single-vertex graph without edges. The Ehrhart series is the power series obtained from the Ehrhart polynomial as follows. 
\begin{definition}
Let $G$ be a bipartite graph and $\varepsilon_{Q_G}(s)$ be the Ehrhart polynomial of the root polytope $Q_G$. The Ehrhart series is defined by
\[
\ehr_{Q_G}(x)=1+\sum_{s\in \mathbb{N}}\varepsilon_{Q_G}(s)x^s.
\]
\end{definition}
Notice that for a (bipartite) graph with no edges, we have $\ehr(x)=1$.
\begin{theorem}\label{lem:serint}
Let $G=(V,E,\mathcal{E})$ be a connected bipartite graph and $I_G(x)$ be the interior polynomial of $G$. Then
\[
\frac{I_G(x)}{(1-x)^{|E|+|V|-1}}=\ehr_{Q_G}(x).
\]
\end{theorem}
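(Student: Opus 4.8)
The plan is to treat this as a formal consequence of Theorem \ref{thm:ehrint}, which already does the substantive work of identifying the coefficients of $I_G$ with the numerators $a_k$ appearing in the expansion of $\varepsilon_{Q_G}(s)$ in the binomial basis. Once we write $\varepsilon_{Q_G}(s)=\sum_{k=0}^{d}a_k\binom{s+d-k}{d}$ with $d=|E|+|V|-2$ and $I_G(x)=\sum_{k=0}^{d}a_kx^k$, the asserted identity is just the standard passage from the $h^*$-vector of a lattice polytope to its Ehrhart series, so I expect no genuine obstacle beyond careful bookkeeping.

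First I would dispose of the degenerate case. The only connected bipartite graph with no edges is the single vertex, for which $|E|+|V|-1=0$, $I_G(x)=1$, and $\ehr_{Q_G}(x)=1$ by the convention recorded just before the statement; the identity then reads $1=1$. For the remainder I assume $G$ has at least one edge, so that Theorem \ref{thm:ehrint} applies and $d\ge 0$. Next I would reconcile the definition of $\ehr_{Q_G}$ with the ordinary generating function $\sum_{s\ge 0}\varepsilon_{Q_G}(s)x^s$: since $0\cdot Q_G=\{0\}$ contains exactly the single lattice point $0$, we have $\varepsilon_{Q_G}(0)=1$, so the leading $1$ in the definition is precisely the $s=0$ term and hence $\ehr_{Q_G}(x)=\sum_{s\ge 0}\varepsilon_{Q_G}(s)x^s$.

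The heart of the computation is the generating-function identity
\[
\sum_{s\ge 0}\binom{s+d-k}{d}x^s=\frac{x^k}{(1-x)^{d+1}}\qquad(0\le k\le d).
\]
To prove it I would start from the classical expansion $\sum_{m\ge 0}\binom{m+d}{d}x^m=(1-x)^{-(d+1)}$ and observe that, viewed as a polynomial in $s$, the expression $\binom{s+d-k}{d}$ has the $d$ consecutive roots $s=k-d,\dots,k-1$, which for $0\le k\le d$ include $s=0,1,\dots,k-1$. Thus the first $k$ terms of the left-hand sum vanish identically, and the substitution $s=m+k$ converts the surviving sum into $x^k\sum_{m\ge 0}\binom{m+d}{d}x^m$, giving the claim.

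Finally I would substitute $\varepsilon_{Q_G}(s)=\sum_{k}a_k\binom{s+d-k}{d}$, interchange the finite outer sum with the (absolutely convergent, for $|x|<1$) inner sum, and collect terms:
\[
\ehr_{Q_G}(x)=\sum_{k=0}^{d}a_k\sum_{s\ge 0}\binom{s+d-k}{d}x^s=\frac{\sum_{k=0}^{d}a_kx^k}{(1-x)^{d+1}}=\frac{I_G(x)}{(1-x)^{|E|+|V|-1}},
\]
which is the assertion, since $d+1=|E|+|V|-1$. The only point requiring real care is the root-counting that licenses the index shift in the key identity; everything else is routine manipulation of power series and the degenerate single-vertex check.
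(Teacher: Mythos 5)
Your proposal is correct and follows essentially the same route as the paper: both treat the identity as a formal consequence of Theorem \ref{thm:ehrint} together with the expansion $\sum_{s\ge 0}\binom{s+d}{d}x^s=(1-x)^{-(d+1)}$, the paper expanding $I_G(x)/(1-x)^{d+1}$ into the Ehrhart series while you sum $\varepsilon_{Q_G}(s)x^s$ and collapse it to $I_G(x)/(1-x)^{d+1}$. Your explicit justification of the index shift via the vanishing of $\binom{s+d-k}{d}$ for $0\le s<k$, and your observation that $\varepsilon_{Q_G}(0)=1$ accounts for the leading $1$ in the definition of $\ehr_{Q_G}$ (where the paper instead cites $a_0=1$), are minor points of bookkeeping that the paper handles implicitly.
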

\begin{proof}
If a bipartite graph $G$ is a single-vertex graph without edges, we have $\ehr_{Q_G}(x)=1$. Thus, we have $(1-x)^{1-1}\ehr_{Q_G}(x)=1$. This is the interior polynomial $I_G(x)$. Therefore the theorem is true for a single-vertex graph without edges.

Assume that a bipartite graph has edges. Write $d=|E|+|V|-2$. It follows easily by using the geometric series that
\[
\frac{1}{(1-x)^{d+1}}=\sum_{s\in\mathbb{Z}_{\ge0}}\binom{s+d}{d} x^s .
\]
Furthermore using Theorem \ref{thm:ehrint},
\begin{eqnarray*}
\frac{I_G(x)}{(1-x)^{d+1}}&=&\left(1+ \sum_{s\in\mathbb{N}}\binom{s+d}{d} x^s \right) (a_0x^0+a_1x^1+\cdots+a_{d}x^{d})\\
&=&a_0+\sum_{s\in\mathbb{N}}\left(a_0\binom{s-0+d}{d}+a_1\binom{s-1+d}{d}+\cdots+a_d\binom{s-d+d}{d}\right)x^s\\
&=&a_0+\sum_{s\in\mathbb{N}}\left(\sum_{k=0}^{d} a_k\binom{s-k+d}{d}\right)x^s\\
&=&a_0+\sum_{s\in\mathbb{N}} \varepsilon_{Q_G}(s) x^s.
\end{eqnarray*}
By \cite[Proposition 6.2]{K}, the constant term of the interior polynomial is $1$. Therefore we have 
\[
\frac{I_G(x)}{(1-x)^{d+1}}=1+\sum_{s\in\mathbb{N}} \varepsilon_{Q_G}(s) x^s=\ehr_{Q_G}(x).
\]
This completes the proof.
\end{proof}
Thanks to this description, we do not have to change bases.
\subsection{A disconnected version}
Before we discuss the Ehrhart polynomial of signed bipartite graphs, we discuss the Ehrhart polynomial of disconnected bipartite graphs. This is not as easy as the analogous discussion of the interior polynomial. First, we introduce a claim about the Ehrhart series of a disjoint union. 
\begin{lemma}\label{thm:serdecompos}
Let $G=G_1\cup G_2$ be the disjoint union of the connected bipartite graphs $G_1$ and $G_2$. Then
\[
\ehr_{Q_G}(x)=\ehr_{Q_{G_1}}(x)\ehr_{Q_{G_2}}(x).
\]
\end{lemma}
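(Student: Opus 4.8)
The plan is to identify the root polytope of the disjoint union with the \emph{free join} of the two constituent root polytopes and then show that counting lattice points in its dilates factors as a Cauchy product of the two Ehrhart series. Since $G_1$ and $G_2$ have disjoint color classes, the ambient space splits as $\mathbb{R}^{E}\oplus\mathbb{R}^{V}=(\mathbb{R}^{E_1}\oplus\mathbb{R}^{V_1})\oplus(\mathbb{R}^{E_2}\oplus\mathbb{R}^{V_2})$, and every edge of $G=G_1\cup G_2$ lies entirely in one of the two pieces. Hence the vertices of $Q_G$ are precisely the vertices of $Q_{G_1}$, placed in the first summand with zeros in the second, together with those of $Q_{G_2}$, placed in the second summand. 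First I would record the resulting parametric description: a point of $Q_G$ is $t\,(p_1\oplus\mathbf{0})+(1-t)\,(\mathbf{0}\oplus p_2)$ with $t\in[0,1]$, $p_1\in Q_{G_1}$, $p_2\in Q_{G_2}$, obtained by collecting the weights supported on $\mathcal{E}_1$ into $t=\sum_{\epsilon\in\mathcal{E}_1}\lambda_\epsilon$.

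Next I would analyze a lattice point $\mathbf{y}\in s\cdot Q_G$. Writing $\mathbf{y}=\mathbf{y}_1\oplus\mathbf{y}_2$ according to the splitting, the description above gives $\mathbf{y}_1=s_1 p_1$ and $\mathbf{y}_2=s_2 p_2$ with $s_1=st$, $s_2=s(1-t)$, so $s_1+s_2=s$ and $s_1,s_2\ge 0$. The main obstacle, and the only step that is not purely formal, is to see that $s_1$ and $s_2$ are forced to be \emph{integers}, not merely non-negative reals summing to $s$. This is where I would exploit a special feature of root polytopes: each vertex $\mathbf{e}+\mathbf{v}$ contributes exactly $1$ to the sum of the $E$-coordinates, so $Q_{G_i}$ lies in the affine hyperplane $\{\sum_{e\in E_i}x_e=1\}$. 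Consequently $s_1=\sum_{e\in E_1}(\mathbf{y}_1)_e$ is a sum of integers, hence an integer, and likewise $s_2$. This integrality is exactly what makes the decomposition respect the integer grading of the dilation parameter.

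With integrality established, the assignment $\mathbf{y}\leftrightarrow(\mathbf{y}_1,\mathbf{y}_2)$ becomes a bijection between $(s\cdot Q_G)\cap(\mathbb{Z}^{E}\oplus\mathbb{Z}^{V})$ and the disjoint union, over pairs $s_1+s_2=s$ with $s_1,s_2\ge 0$, of the products $\left((s_1 Q_{G_1})\cap(\mathbb{Z}^{E_1}\oplus\mathbb{Z}^{V_1})\right)\times\left((s_2 Q_{G_2})\cap(\mathbb{Z}^{E_2}\oplus\mathbb{Z}^{V_2})\right)$; the reverse map reassembles $\mathbf{y}_1\oplus\mathbf{y}_2$, which lands in $s\cdot Q_G$ by the free-join description with $t=s_1/s$. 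Counting both sides yields the convolution $\varepsilon_{Q_G}(s)=\sum_{s_1+s_2=s}\varepsilon_{Q_{G_1}}(s_1)\,\varepsilon_{Q_{G_2}}(s_2)$, valid for all $s\ge 0$ once we use that $\varepsilon_{Q_{G_i}}(0)=1$ corresponds to the single point $0\cdot Q_{G_i}=\{\mathbf{0}\}$ (matching the constant term $1$ built into the definition of $\ehr$). Finally I would recognize this convolution as exactly the coefficient relation of the Cauchy product, so that with the convention $\ehr_{Q}(x)=\sum_{s\ge 0}\varepsilon_{Q}(s)x^s$ used in the text one gets $\ehr_{Q_G}(x)=\ehr_{Q_{G_1}}(x)\,\ehr_{Q_{G_2}}(x)$, as claimed.
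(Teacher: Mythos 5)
Your argument is correct, and it takes a genuinely different route from the paper. The paper proves this lemma indirectly: it invokes Theorem \ref{lem:serint} to convert each $\ehr_{Q_{G_i}}$ into $I_{G_i}(x)/(1-x)^{|E_i|+|V_i|-1}$, applies K\'alm\'an's product formula $I_{G'}=I_{G_1}I_{G_2}$ to an auxiliary \emph{connected} graph $G'$ obtained by adding a bridging edge $\epsilon$, and then does the geometric work on $Q_{G'}$, slicing $s\cdot Q_{G'}$ into levels $k\cdot Q_\epsilon\oplus(s-k)\cdot Q_G$ and ruling out lattice points at non-integer levels via a cycle-change argument, to arrive at $\varepsilon_{Q_{G'}}(s)=\sum_{k=0}^s\varepsilon_{Q_G}(k)$ and hence $(1-x)\ehr_{Q_{G'}}=\ehr_{Q_G}$. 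You instead work directly on $Q_G$ itself, identifying it as the join of $Q_{G_1}\oplus\mathbf{0}$ and $\mathbf{0}\oplus Q_{G_2}$ and proving the convolution $\varepsilon_{Q_G}(s)=\sum_{s_1+s_2=s}\varepsilon_{Q_{G_1}}(s_1)\varepsilon_{Q_{G_2}}(s_2)$ by a lattice-point bijection. Your integrality step is the nicest part: since $Q_{G_1}$ lies in the hyperplane $\sum_{e\in E_1}x_e=1$, the level $s_1$ of a lattice point is just the sum of its $E_1$-coordinates, hence an integer. This is cleaner and more robust than the paper's cycle-change argument for the analogous claim, and it makes the proof self-contained (no appeal to the interior polynomial or to \cite[Theorem 6.7]{K}); what the paper's route buys is reuse of machinery it needs anyway. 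Two small points you should make explicit: the degenerate case where one $G_i$ is a single vertex with no edges (so $Q_{G_i}=\emptyset$ and the join description degenerates; there the identity holds trivially because $\ehr_{Q_{G_i}}=1$ by convention and $Q_G$ is just the other factor), and the uniqueness of the splitting $\mathbf{y}=\mathbf{y}_1\oplus\mathbf{y}_2$ with its level $s_1$, which is what makes your correspondence a genuine bijection rather than merely a surjection.
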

\begin{proof}
If the bipartite graphs $G_1,G_2$ are both single-vertex graphs without edges, we have $\ehr_{Q_{G_1}}(x)=1$, and $\ehr_{Q_{G_2}}(x)=1$. Moreover, since the bipartite graph $G$, which is disjoint union of $G_1$ and $G_2$, is two isolated vertex, we have $\ehr_{Q_G}=1$. Therefore the theorem is true.

Assume either of the bipartite graphs $G_1,G_2$ has at least one edge. Let $G_1=(V_1,E_1,\mathcal{E}_1)$, and $G_2=(V_2,E_2,\mathcal{E}_2)$. We use Theorem \ref{lem:serint} for the connected bipartite graphs $G_1$ and $G_2$ to obtain
\[
\frac{I_{G_1}(x)}{(1-x)^{|E_1|+|V_1|-1}}=\ehr_{Q_{G_1}}(x), 
\hspace{5pt}\mbox{ and }\hspace{5pt}
\frac{I_{G_2}(x)}{(1-x)^{|E_2|+|V_2|-1}}=\ehr_{Q_{G_2}}(x).
\]
Let the bipartite graph $G'$ be defined by $(V_1\cup V_2,E_1\cup E_2,\mathcal{E}_1 \cup \mathcal{E}_2 \cup \{ \epsilon\})$, where $\epsilon$ is a new edge connecting two vertices which are chosen opportunely from each bipartite graph $G_1$ and $G_2$. Then $G'$ is connected and we may use the product formula (\cite[Theorem 6.7]{K}) $I_{G'}(x)=I_{G_1}(x)I_{G_2}(x)$. By the above,
\begin{eqnarray*}
\ehr_{Q_{G_1}}(x)\ehr_{Q_{G_2}}(x)
&=&\frac{I_{G_1}(x)}{(1-x)^{|E_1|+|V_1|-1}}\frac{I_{G_2}(x)}{(1-x)^{|E_2|+|V_2|-1}}\\
&=&(1-x)\frac{I_{G'}(x)}{(1-x)^{|E_1|+|E_2|+|V_1|+|V_2|-1}}.\\
\end{eqnarray*}
By Theorem \ref{lem:serint}, we have
\[
\frac{I_{G'}(x)}{(1-x)^{|E_1|+|E_2|+|V_1|+|V_2|-1}}=\ehr_{Q_{G'}}(x).
\]
So we get $\ehr_{Q_{G_1}}(x)\ehr_{Q_{G_2}}(x)=(1-x)\ehr_{Q_{G'}}(x)$.

Now we consider the root polytope $Q_{G'}$ and the Ehrhart polynomial $\varepsilon_{Q_{G'}}(s)$. First, we show that for any $s\in\mathbb{Z}_{\ge 0}$, we have $\displaystyle s\cdot Q_{G'}=\bigcup_{k\in[0,s]}\left(k\cdot Q_{\epsilon}\oplus(s-k)\cdot Q_G\right)$, where $Q_{\epsilon}$ is the singleton given as the root polytope of the bipartite graph consisting of only one edge $\epsilon$. As this is obvious for $s=0$, we may assume that $s\ge1$. We take $s{\bf x} \in s\cdot Q_{G'}$. We will denote by $n=|\mathcal{E}_1|+|\mathcal{E}_2|$ the number of edges in $G=G_1 \cup G_2$. Now, we denote by $\bm{\epsilon}_1,\bm{\epsilon}_2,\ldots ,\bm{\epsilon}_n$ the vectors corresponding to the edges of the bipartite graph $G=G_1\cup G_2$. Since the edges  $\epsilon,\epsilon_1,\ldots\epsilon_n$ are all the edges in the bipartite graph $G'$, we have
\[
{\bf x}=\lambda \bm{\epsilon} +\lambda_1\bm{\epsilon}_1+\lambda_2\bm{\epsilon}_2+\cdots +\lambda_n\bm{\epsilon}_n,
\]
where $\displaystyle \lambda+ \sum_{i=1}^n \lambda_i=1$ and $\lambda,\lambda_1,\ldots\lambda_n$ are non-negative. For ${\bf x}\ne\bm{\epsilon}$, we also have, 
\[
s{\bf x}=s\lambda \cdot \bm{\epsilon} +(s-s\lambda)\frac{\lambda_1\bm{\epsilon}_1+\lambda_2\bm{\epsilon}_2+\cdots +\lambda_n\bm{\epsilon}_n}{1-\lambda}.
\]
Since $\displaystyle\frac{\lambda_1+\lambda_2+\cdots+\lambda_{n}}{1-\lambda}=1$, we obtain $\displaystyle\frac{\lambda_1\bm{\epsilon}_1+\lambda_2\bm{\epsilon}_2+\cdots +\lambda_n\bm{\epsilon}_n}{1-\lambda} \in Q_G$. Letting $k=\lambda s$,
\[
s{\bf x}=k \bm{\epsilon}+(s-k) \frac{\lambda_1\bm{\epsilon}_1+\lambda_2\bm{\epsilon}_2+\cdots +\lambda_n\bm{\epsilon}_n}{1-\lambda}\in k\cdot Q_{\epsilon}\oplus(s-k)\cdot Q_{G} .
\]
If $0\le\lambda\le1$, then $0\le k \le s$. Thus $s\cdot Q_{G'}\subset \displaystyle\bigcup_{k\in[0,s]}\left(k\cdot Q_{\epsilon}\oplus(s-k)\cdot Q_{G}\right)$. Next we take $k\bm{\epsilon}+(s-k){\bf x} \in (kQ_{\epsilon}\oplus(s-k)Q_{G})$, where ${\bf x}\in Q_G$. Letting ${\bf x}=\lambda_1\bm{\epsilon}_1+\lambda_2\bm{\epsilon}_2+\cdots +\lambda_n\bm{\epsilon}_n$, we have
\[
k\bm{\epsilon}+(s-k){\bf x} =k\bm{\epsilon}+(s-k)(\lambda_1\bm{\epsilon}_1+\lambda_2\bm{\epsilon}_2+\cdots +\lambda_n\bm{\epsilon}_n),
\]
where $\lambda_i \ge0$ for all $i$ and $\displaystyle\sum_{i=1}^n\lambda_i=1$. We have
\[
k\bm{\epsilon}+(s-k){\bf x} =s\frac{k}{s}\bm{\epsilon}+s\frac{s-k}{s}(\lambda_1\bm{\epsilon}_1+\lambda_2\bm{\epsilon}_2+\cdots +\lambda_n\bm{\epsilon}_n).
\]
Since $\displaystyle\frac{k}{s}+\frac{s-k}{s}\sum_{k=1}^n\lambda_k=1$, we have 
\[
k\bm{\epsilon}+(s-k){\bf x} =s\left(\frac{k}{s}\bm{\epsilon}
+\frac{s-k}{s} (\lambda_1\bm{\epsilon}_1+\lambda_2\bm{\epsilon}_2+\cdots +\lambda_n\bm{\epsilon}_n)\right)\in s\cdot Q_{G'}.
\]
Hence we have shown that $\displaystyle\bigcup_{k\in[0,s]}\left(k\cdot Q_{\epsilon}\oplus(s-k)\cdot Q_{G}\right) \subset sQ_{G'}$. Therefore, $\displaystyle s\cdot Q_{G'}=\bigcup_{k\in[0,s]}\left(k\cdot Q_{\epsilon}\oplus(s-k)\cdot Q_{G}\right)$. The geometric idea behind this formula is that $Q_{G'}$ is a cone over $Q_G$ which we slice into levels parallel to the base.

Second, we show that, for any non-integer $k\in[0,s]$, we have 
\[
(k\cdot Q_{\epsilon}\oplus(s-k)\cdot Q_{G})\cap (\mathbb{Z}^{E_1}\oplus\mathbb{Z}^{E_2}\oplus\mathbb{Z}^{V_1}\oplus\mathbb{Z}^{V_2})=\emptyset.\]
Suppose that $k$ is non-integer. Any weight system $\mathcal{W}$ for an integer point in $s\cdot Q_{G'}=k\cdot Q_{\epsilon}\oplus(s-k)\cdot Q_{G}$ satisfies that $\mathcal{W}(\epsilon')\ge0$ for any $\epsilon' \in \mathcal{E}_1\cup\mathcal{E}_2 \cup\{ \epsilon \}$, that $\displaystyle \sum_{\epsilon'\in\mathcal{E}_1\cup\mathcal{E}_2 \cup\{ \epsilon \}}\mathcal{W}(\epsilon')=s$, and that the sum of the weights around each vertex is integer. Moreover non-integer $k$ is the weight of the edge $\epsilon$. Then we consider the vertex $v \in G_1$ incident to $\epsilon$. Since the sum of weights around $v$ is integer, we may take an edge of $G_1$ which is incident to $v$ and whose weight is not integer. Repeating this operation, we find a path in $G_1$ of edges with non-integer weights. Since $G_1$ is finite, we get a cycle. Using cycle change along this cycle, the weight of at least one edge can be changed to an integer (such as $0$). Again $G_1$ is finite, this operation can not be repeated indefinitely and hence there can be no integer points in $s\cdot Q_{G'}$ at a non-integer level $k$. Thus $\displaystyle |(k\cdot Q_{\epsilon}\oplus(s-k)\cdot Q_{G})\cap( \mathbb{Z}^{E_1}\oplus\mathbb{Z}^{E_2}\oplus\mathbb{Z}^{V_1}\oplus\mathbb{Z}^{V_2})|\ne0$ if and only if $k$ is integer. Since $Q_{\epsilon}$ is a one point set, we have
\begin{eqnarray*}
\varepsilon_{Q_{G'}}(s)
&=&|(s\cdot Q_{G'})\cap(
\mathbb{Z}^{E_1}\oplus\mathbb{Z}^{E_2}\oplus\mathbb{Z}^{V_1}\oplus\mathbb{Z}^{V_2})|\\
&=&\sum_{k=0}^s|(k\cdot Q_{\epsilon}\oplus (s-k)\cdot Q_{G})\cap(
\mathbb{Z}^{E_1}\oplus\mathbb{Z}^{E_2}\oplus\mathbb{Z}^{V_1}\oplus\mathbb{Z}^{V_2})|\\
&=&\varepsilon_{Q_G}(s)+\varepsilon_{Q_G}(s-1)+\cdots+\varepsilon_{Q_G}(0).
\end{eqnarray*}
Lastly, by the above,
\begin{eqnarray*}
\ehr_{Q_{G_1}}(x)\ehr_{Q_{G_2}}(x)
&=&(1-x)\ehr_{Q_{G'}}(x)\\
&=&(1-x)\left(1+\sum_{s\in\mathbb{N}}\varepsilon_{Q_{G'}}(s)x^s\right)\\
&=&(1-x)\left(1+\sum_{s\in\mathbb{N}}(\varepsilon_{Q_G}(s)+\varepsilon_{Q_G}(s-1)+\cdots+\varepsilon_{Q_G}(0)) x^s\right).
\end{eqnarray*}
By definition, for any bipartite graph $G$, we have $\varepsilon_{Q_G}(0)=1$. Therefore we have
\begin{eqnarray*}
\ehr_{Q_{G_1}}(x)\ehr_{Q_{G_2}}(x)
&=&(1-x)\left(1+\sum_{s\in\mathbb{N}}(\varepsilon_{Q_G}(s)+\varepsilon_{Q_G}(s-1)+\cdots+\varepsilon_{Q_G}(1)+1) x^s\right)\\
&=&(1-x)\left(1+\sum_{s\in\mathbb{N}}\varepsilon_{Q_G}(s)x^s\right)(1+x^1+x^2+\cdots)\\
&=&(1-x)\left(1+\sum_{s\in\mathbb{N}}\varepsilon_{Q_G}(s)x^s\right)\frac{1}{1-x}\\
&=&1+\sum_{s\in\mathbb{N}}\varepsilon_{Q_G}(s)x^s\\
&=&\ehr_{Q_G}(x).
\end{eqnarray*}
This completes the proof.
\end{proof}
Furthermore we prove the following generalizations of Lemma \ref{thm:serdecompos} and Theorem \ref{lem:serint} for any (possibly disconnected but unsigned) bipartite graph.
\begin{theorem}\label{thm:serdec}
Let $G=G_1\cup G_2$ be the disjoint union of (possibly disconnected) bipartite graphs $G_1$ and $G_2$. Then
\[
\ehr_{Q_G}(x)=\ehr_{Q_{G_1}}(x)\ehr_{Q_{G_2}}(x).
\]
\end{theorem}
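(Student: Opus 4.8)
The plan is to reduce the statement to the single factorization identity
\[
\ehr_{Q_G}(x)=\prod_{i=1}^{r}\ehr_{Q_{C_i}}(x),
\]
where $C_1,\dots,C_r$ are the connected components of an \emph{arbitrary} bipartite graph $G$; call this $(\star)$. Once $(\star)$ is available the theorem is immediate, since the components of $G_1\cup G_2$ are exactly the components of $G_1$ together with those of $G_2$: applying $(\star)$ to $G_1\cup G_2$, to $G_1$ and to $G_2$ separately and matching the factors yields $\ehr_{Q_{G_1\cup G_2}}(x)=\ehr_{Q_{G_1}}(x)\ehr_{Q_{G_2}}(x)$. So the real content is $(\star)$, and note that a naive induction that splits off components and invokes Lemma \ref{thm:serdecompos} directly cannot work, because a disjoint union of two or more components is never connected, whereas that lemma requires \emph{both} pieces to be connected.

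First I would isolate the geometric core of the proof of Lemma \ref{thm:serdecompos} as a stand-alone statement with no connectivity hypothesis: if $G$ is any bipartite graph and $\epsilon$ is a new edge joining two vertices lying in two \emph{distinct} components of $G$, then for $G'=G\cup\{\epsilon\}$ one has
\[
\ehr_{Q_{G'}}(x)=\frac{1}{1-x}\,\ehr_{Q_G}(x).
\]
Here the cone decomposition $s\cdot Q_{G'}=\bigcup_{k\in[0,s]}\bigl(k\cdot Q_{\epsilon}\oplus(s-k)\cdot Q_G\bigr)$ is pure convexity and uses only that $\bm{\epsilon}$ is a genuine new vertex, which holds because $\epsilon$ bridges two components and hence $\bm{\epsilon}\notin Q_G$. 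The assertion that non-integer levels $k$ carry no lattice points is precisely the cycle-change argument of Lemma \ref{thm:serdecompos}: starting from the endpoint $v$ of $\epsilon$ one traces non-integer edges inside the component of $v$ until a cycle appears and then performs a cycle change. This tracing never leaves a single component, so the argument survives verbatim when $G$ is disconnected. This is the step I expect to require the most care to state cleanly, and it is the true obstacle to the whole proof.

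With this in hand I would prove $(\star)$ directly, without further induction on components. Build a connected graph $\widehat G$ from $G$ by adding $r-1$ bridging edges, each joining two components that are still distinct at the moment it is added (choosing endpoints of opposite colour; if all components are isolated vertices of a single colour then $Q_G=\emptyset$ and $(\star)$ is trivial, so one may assume the bridging is possible). Applying the displayed formula $r-1$ times gives $\ehr_{Q_{\widehat G}}(x)=(1-x)^{-(r-1)}\ehr_{Q_G}(x)$. Since $\widehat G$ arises by successively joining connected graphs through single new edges, the interior-polynomial product formula \cite[Theorem 6.7]{K} yields $I_{\widehat G}(x)=\prod_{i=1}^{r}I_{C_i}(x)$. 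The colour classes $E$ and $V$ are unchanged in passing from $G$ to $\widehat G$, so $|E|+|V|=\sum_{i}(|E_i|+|V_i|)$.

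Finally I would apply Theorem \ref{lem:serint} to the connected graphs $\widehat G$ and $C_1,\dots,C_r$ and keep track of the powers of $(1-x)$: substituting $I_{C_i}(x)=(1-x)^{|E_i|+|V_i|-1}\ehr_{Q_{C_i}}(x)$ and $I_{\widehat G}(x)=(1-x)^{|E|+|V|-1}\ehr_{Q_{\widehat G}}(x)$ into $I_{\widehat G}=\prod_i I_{C_i}$ gives $\prod_i\ehr_{Q_{C_i}}(x)=(1-x)^{r-1}\ehr_{Q_{\widehat G}}(x)$, which combined with $\ehr_{Q_{\widehat G}}(x)=(1-x)^{-(r-1)}\ehr_{Q_G}(x)$ produces exactly $(\star)$. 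Everything after the geometric step is thus routine bookkeeping built on Theorem \ref{lem:serint} and the product formula; the only genuine work is confirming that the lattice-point analysis of Lemma \ref{thm:serdecompos} is purely local to a single component and so needs no connectivity of $G$, together with the minor check that bridging edges can always be chosen respecting the bipartite colouring.
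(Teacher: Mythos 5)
Your proposal is correct, and its essential ingredient is the same one the paper relies on: the observation that adding a bridging edge $\epsilon$ between two distinct components multiplies the Ehrhart series by $1/(1-x)$, via the cone decomposition $s\cdot Q_{G'}=\bigcup_{k\in[0,s]}\bigl(k\cdot Q_{\epsilon}\oplus(s-k)\cdot Q_G\bigr)$ and the cycle-change argument ruling out lattice points at non-integer levels. Where you differ is in the surrounding architecture. The paper proves Theorem \ref{thm:serdec} by an interlocking induction on the component number, proving it simultaneously with Theorem \ref{thm:disserint}: at each stage it bridges the two (possibly disconnected) pieces $G_1,G_2$, invokes the product formula $I'_{G'}=I'_{G_1}I'_{G_2}$ and the induction hypothesis for \ref{thm:disserint}, and then asserts that $(1-x)\ehr_{Q_{G'}}=\ehr_{Q_G}$ ``is proved in the same way as Lemma \ref{thm:serdecompos}.'' You instead isolate that bridging identity as a stand-alone lemma for arbitrary $G$, apply it $r-1$ times to connect all components into $\widehat G$, and close the argument with a single application of the connected-case Theorem \ref{lem:serint} and the iterated product formula \cite[Theorem 6.7]{K}; the theorem then falls out of the full factorization over components, and Theorem \ref{thm:disserint} becomes a corollary rather than a partner in the induction. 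Both routes must confirm that the lattice-point analysis of Lemma \ref{thm:serdecompos} does not use connectivity of the ambient graph; you correctly identify this as the crux and give the right reason (the tracing of non-integer-weight edges never leaves the single component containing the endpoint of $\epsilon$), which is arguably more careful than the paper's one-line appeal. One small point worth making explicit in your write-up: what the level decomposition really needs is that $\bm{\epsilon}$ lies outside the affine hull of $Q_G$, so that the coefficient $\lambda$ of $\bm{\epsilon}$ (hence the level $k$) is well defined; this holds because on $Q_G$ the $E$-coordinate sum and the $V$-coordinate sum over any single component agree, while $\bm{\epsilon}$ breaks this balance for each of the two components it bridges. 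Your handling of the degenerate colouring issue (all vertices of one colour forces $Q_G=\emptyset$) is adequate.
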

\begin{theorem}\label{thm:disserint}
Let $G=(V,E,\mathcal{E})$ be a (possibly disconnected) bipartite graph and $I'_G(x)$ be the interior polynomial of $G$. Then
\[
\frac{I'_G(x)}{(1-x)^{|E|+|V|-1}}=\ehr_{Q_G}(x).
\]
\end{theorem}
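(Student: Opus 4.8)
The plan is to prove both Theorem~\ref{thm:serdec} and Theorem~\ref{thm:disserint} together by induction on the number of connected components $k(G)$, using Lemma~\ref{thm:serdecompos} (the two-connected-component case of the Ehrhart series product formula) and the already-established single-component case Theorem~\ref{lem:serint} as the base material. The two statements are intertwined: the definition of $I'_G$ involves the factors $(1-x)^{k(G)-1}$ and the products of the $I_{G_i}$, so establishing the Ehrhart-series identity in the disconnected case is essentially equivalent to verifying the compatibility between the Ehrhart-series product rule and the definition of $I'$.

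First I would prove Theorem~\ref{thm:serdec}. The base case $k(G)=2$ with both components connected is exactly Lemma~\ref{thm:serdecompos}. For the inductive step I would write $G = G_1 \cup G_2$ where, say, $G_2$ is a single connected component and $G_1$ has one fewer component than $G$; by peeling off connected pieces one at a time and repeatedly applying Lemma~\ref{thm:serdecompos}, the general product formula $\ehr_{Q_G}(x)=\prod_i \ehr_{Q_{G_i}}(x)$ over all connected components $G_i$ follows. The key point that makes this work is that Lemma~\ref{thm:serdecompos} is stated for connected $G_1,G_2$, so the induction must be set up so that at each stage one of the two pieces being joined is connected; splitting off one component at a time accomplishes exactly this.

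Next I would derive Theorem~\ref{thm:disserint} from Theorem~\ref{thm:serdec} together with the connected case Theorem~\ref{lem:serint} and the definition of $I'_G$. Writing $G=G_1\cup\cdots\cup G_{k}$ for the connected components with $k=k(G)$, Theorem~\ref{thm:serdec} gives $\ehr_{Q_G}(x)=\prod_{i=1}^{k}\ehr_{Q_{G_i}}(x)$, and Theorem~\ref{lem:serint} rewrites each factor as $I_{G_i}(x)/(1-x)^{|E_i|+|V_i|-1}$. Multiplying these, the denominators combine to $(1-x)^{\sum_i(|E_i|+|V_i|-1)}=(1-x)^{(|E|+|V|)-k}$, so
\[
\ehr_{Q_G}(x)=\frac{\prod_{i=1}^{k}I_{G_i}(x)}{(1-x)^{|E|+|V|-k}}.
\]
Comparing this with the target expression $I'_G(x)/(1-x)^{|E|+|V|-1}$ and using the definition $I'_G(x)=(1-x)^{k-1}\prod_i I_{G_i}(x)$, the exponent bookkeeping matches: $(1-x)^{|E|+|V|-1}=(1-x)^{k-1}(1-x)^{|E|+|V|-k}$, so the identity drops out. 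I would also check the degenerate situation where some component is a single vertex with no edges, but Theorem~\ref{lem:serint} has already been extended to cover that case, so no separate argument is needed.

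The main obstacle is not in the final bookkeeping but in making sure the inductive peeling in Theorem~\ref{thm:serdec} genuinely reduces to the connected-connected hypothesis of Lemma~\ref{thm:serdecompos} at every stage, and in correctly tracking the exponent of $(1-x)$ when components are single vertices (which contribute $|E_i|+|V_i|-1 = 0$ to the exponent and $\ehr=1$ to the product). Once the convention for edgeless single-vertex components is pinned down so that the counts $|E|$ and $|V|$ and the component count $k$ remain consistent, both theorems follow by the straightforward induction and the algebraic simplification above.
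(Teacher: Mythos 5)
Your second half --- deriving Theorem \ref{thm:disserint} from Theorem \ref{thm:serdec}, the connected case Theorem \ref{lem:serint}, and the definition of $I'_G$ --- is correct, and the exponent bookkeeping $(1-x)^{|E|+|V|-1}=(1-x)^{k-1}(1-x)^{|E|+|V|-k}$ is exactly the computation the paper performs. The problem is in your first half. You propose to prove Theorem \ref{thm:serdec} outright by peeling off one connected component at a time and ``repeatedly applying Lemma \ref{thm:serdecompos}.'' But that lemma is stated (and proved) only when \emph{both} $G_1$ and $G_2$ are connected; arranging for ``one of the two pieces being joined'' to be connected does not meet its hypotheses. After the first peeling you need $\ehr_{Q_{G_1\cup G_2}}=\ehr_{Q_{G_1}}\ehr_{Q_{G_2}}$ with $G_1$ disconnected, and this is not a formal consequence of the connected--connected case: $Q_{C_1\cup C_2\cup C_3}$ is a genuinely new polytope, not assembled from the two-component polytopes, so the three-component identity does not follow from the two-component ones by any algebraic manipulation. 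In other words, the statement you are ``repeatedly applying'' is precisely the general case of Theorem \ref{thm:serdec} that remains to be proved, so the induction as you set it up is circular.

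The paper closes this gap with an interlocking induction on $k(G)$ in which the two theorems are proved simultaneously: to handle $G=G_1\cup G_2$ with $G_1$ possibly disconnected, it invokes the induction hypothesis of Theorem \ref{thm:disserint} for $G_1$ and $G_2$ (each having fewer components) to replace $\ehr_{Q_{G_i}}$ by $I'_{G_i}(x)/(1-x)^{|E_i|+|V_i|-1}$, uses the product formula $I'_{G'}=I'_{G_1}I'_{G_2}$ for the graph $G'$ obtained by adding a connecting edge $\epsilon$, and then reruns the geometric cone-slicing argument of Lemma \ref{thm:serdecompos} (the decomposition $s\cdot Q_{G'}=\bigcup_{k}\left(k\cdot Q_{\epsilon}\oplus(s-k)\cdot Q_{G}\right)$ and the absence of lattice points at non-integer levels) to get $(1-x)\ehr_{Q_{G'}}=\ehr_{Q_G}$. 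To repair your argument you would either have to adopt this interlocking structure, or reprove Lemma \ref{thm:serdecompos} directly in the case where one factor is disconnected --- which again requires the disconnected version of Theorem \ref{lem:serint}, i.e.\ Theorem \ref{thm:disserint} itself for fewer components. Either way, the two statements cannot be decoupled in the order you propose.
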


\begin{proof}[Proof of Theorem \ref{thm:serdec} and Theorem \ref{thm:disserint}]
We proceed by interlocking induction on the component number $k(G)$. When $k(G)=1$, the statement of Theorem \ref{thm:disserint} holds by Theorem \ref{lem:serint}. When $k(G)=2$, the statement of Theorem \ref{thm:serdec} holds by Lemma \ref{thm:serdecompos}.

Suppose that the theorems holds when $k(G)<n$ and let $G$ have $k(G)=n$ components. We take the non-empty bipartite graphs $G_1,G_2$ such that $G$ is their disjoint union.

First, we prove the Theorem \ref{thm:serdec} for $G=G_1\cup G_2$. If the bipartite graphs $G_1,G_2$ are both vertices with no edges, we have $\ehr_{Q_{G_1}}(x)=1$, and $\ehr_{Q_{G_2}}(x)=1$. And by $\ehr_{Q_G}(s)=1$, the theorem is true.

Assume either of the bipartite graphs $G_1,G_2$ has at least one edge. Let $G_1=(V_1,E_1,\mathcal{E}_1)$, and $G_2=(V_2,E_2,\mathcal{E}_2)$. By induction hypothesis, for the bipartite graph which has at most $n-1$ components, the statement of Theorem \ref{thm:disserint} is true. Since $G_1$ and $G_2$ have at most $n-1$ components, we have
\[
\frac{I'_{G_1}(x)}{(1-x)^{|E_1|+|V_1|-1}}=\ehr_{Q_{G_1}}(x), 
\hspace{5pt}\mbox{ and }\hspace{5pt}
\frac{I'_{G_2}(x)}{(1-x)^{|E_2|+|V_2|-1}}=\ehr_{Q_{G_2}}(x).
\]
Define the bipartite graph $G'=(V_1\cup V_2,E_1\cup E_2,\mathcal{E}_1 \cup \mathcal{E}_2 \cup \{ \epsilon\})$, where $\epsilon$ is a new edge connecting two vertices which are chosen opportunely from each bipartite graph $G_1$ and $G_2$. Then $G'$ has $n-1$ components and we may use the product formula $I'_{G'}(x)=I'_{G_1}(x)I'_{G_2}(x)$ (an unsigned case of Theorem \ref{thm:product}). By the above,
\begin{eqnarray*}
\ehr_{Q_{G_1}}(x)\ehr_{Q_{G_2}}(x)
&=&\frac{I_{G_1}(x)}{(1-x)^{|E_1|+|V_1|-1}}\frac{I_{G_2}(x)}{(1-x)^{|E_2|+|V_2|-1}}\\
&=&(1-x)\frac{I_{G'}(x)}{(1-x)^{|E_1|+|E_2|+|V_1|+|V_2|-1}}.\\
\end{eqnarray*}
By the induction hypothesis on the side of Theorem \ref{thm:disserint}, we have
\[
\frac{I_{G'}(x)}{(1-x)^{|E_1|+|E_2|+|V_1|+|V_2|-1}}=\ehr_{Q_{G'}}(x).
\]
So we get $\ehr_{Q_{G_1}}(x)\ehr_{Q_{G_2}}(x)=(1-x)\ehr_{Q_{G'}}(x)$. Therefore it is sufficient to show that $(1-x)\ehr_{Q_{G'}}(x)=\ehr_{Q_G}(x)$, which is proved in same way as Lemma \ref{thm:serdecompos}.

Second, we prove the statement of Theorem \ref{thm:disserint} when $k(G)=n$. By the above, we have
\[
\ehr_{Q_G}(x)=\ehr_{Q_{G_1}}(x)\ehr_{Q_{G_2}}(x)
\]
By induction hypothesis on the side of Theorem \ref{thm:disserint}, we obtain
\[
\ehr_{Q_G}(x)=\frac{I'_{G_1}(x)}{(1-x)^{|E_1|+|V_1|-1}}\frac{I'_{G_2}(x)}{(1-x)^{|E_2|+|V_2|-1}}
\]
Furthermore by Lemma \ref{lem:disint},
\begin{eqnarray*}
\ehr_{Q_G}(x)=
&=&\frac{1-x}{1-x}\frac{I'_{G_1}(x)}{(1-x)^{|E_1|+|V_1|-1}}\frac{I'_{G_2}(x)}{(1-x)^{|E_2|+|V_2|-1}}\\
&=&\frac{I'_{G_1\cup G_2}(x)}{(1-x)^{|E_1|+|E_2|+|V_1|+|V_2|-1}}\\
&=&\frac{I'_G(x)}{(1-x)^{|E|+|V|-1}}.
\end{eqnarray*}
This completes the proof by induction.
\end{proof}

\begin{cor}\label{thm:disintehr2}
Let $\varepsilon_{Q_G}(s) = \displaystyle \sum_{k=0}^{|E|+|V|-2}a_k\binom{s+|E|+|V|-2-k}{|E|+|V|-2}$ be the Ehrhart polynomial of a (possibly disconnected) bipartite graph $G$ with at least one edge.
Then
\[
I'_G(x)=a_0x^0+a_1x^1+\cdots+a_{|E|+|V|-2}x^{|E|+|V|-2}.
\]
\end{cor}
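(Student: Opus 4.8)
The plan is to deduce the corollary directly from Theorem \ref{thm:disserint}, by the same power-series bookkeeping that turns Theorem \ref{lem:serint} into Theorem \ref{thm:ehrint} in the connected case. Write $d=|E|+|V|-2$. Theorem \ref{thm:disserint} gives
\[
\ehr_{Q_G}(x)=\frac{I'_G(x)}{(1-x)^{d+1}},
\]
so it suffices to check that the right-hand side, expanded as a power series, has the same coefficients as the Ehrhart series built from the prescribed expansion $\varepsilon_{Q_G}(s)=\sum_{k=0}^{d}a_k\binom{s+d-k}{d}$.

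First I would record the geometric-series identity $\frac{1}{(1-x)^{d+1}}=\sum_{s\ge0}\binom{s+d}{d}x^s$, exactly as in the proof of Theorem \ref{lem:serint}. Next, set $P(x)=a_0+a_1x+\cdots+a_dx^d$ and compute the Cauchy product
\[
\frac{P(x)}{(1-x)^{d+1}}=\Bigl(\sum_{k=0}^{d}a_kx^k\Bigr)\Bigl(\sum_{j\ge0}\binom{j+d}{d}x^j\Bigr),
\]
whose coefficient of $x^s$ is $\sum_{k=0}^{d}a_k\binom{s-k+d}{d}$. The one point to verify is that this finite sum equals $\varepsilon_{Q_G}(s)$ for every $s\ge0$: when $k>s$ one has $0\le s-k+d<d$, so $\binom{s-k+d}{d}=0$ and the surplus terms drop out, leaving precisely $\sum_{k=0}^{d}a_k\binom{s+d-k}{d}=\varepsilon_{Q_G}(s)$. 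Since $\varepsilon_{Q_G}(0)=1$ matches the constant term $a_0\binom{d}{d}=a_0$, this shows $\frac{P(x)}{(1-x)^{d+1}}=\sum_{s\ge0}\varepsilon_{Q_G}(s)x^s=\ehr_{Q_G}(x)$.

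Combining the two displays gives $\frac{P(x)}{(1-x)^{d+1}}=\frac{I'_G(x)}{(1-x)^{d+1}}$, and multiplying through by $(1-x)^{d+1}$, a unit in the formal power series ring, yields $I'_G(x)=P(x)$, which is the claim. I do not expect a genuine obstacle here, since the substantive content is already carried by Theorem \ref{thm:disserint}; the only care needed is the coefficient comparison for small indices $s<d$ described above, together with the observation that $I'_G$ is genuinely a polynomial of degree at most $d$ (being a product of the connected interior polynomials $I_{G_i}$ with a power of $1-x$), so that the equality of power series is the same as equality of the two polynomials.
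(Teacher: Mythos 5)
Your proposal is correct and follows essentially the same route as the paper: both deduce the corollary from Theorem \ref{thm:disserint} by expanding $1/(1-x)^{|E|+|V|-1}$ as $\sum_{s\ge0}\binom{s+d}{d}x^s$ and matching coefficients of the resulting power series with the prescribed binomial expansion of $\varepsilon_{Q_G}(s)$. The only cosmetic difference is that the paper concludes by invoking that the shifted binomial coefficients form a basis of the polynomial space, whereas you cancel the unit $(1-x)^{d+1}$ in the formal power series ring; both steps are equivalent bookkeeping.
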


\begin{proof}
By Theorem \ref{thm:disserint}, we have
\[
\frac{I'_G(x)}{(1-x)^{|E|+|V|-1}}=\ehr_{Q_G}(x).
\]
Assume that $G$ has $k(G)$ components. The dimension of the root polytope $Q_G$ is less than or equal to $|E|+|V|-2$. Then letting $\displaystyle\varepsilon_{Q_G}(s) = \sum_{k=0}^{|E|+|V|-2}a_k\binom{s+|E|+|V|-2-k}{|E|+|V|-2}$, the right hand side is
\[
\ehr_{Q_G}(x)=
\sum_{s\in\mathbb{Z}_{\ge0}}
\left( \sum_{k=0}^{|E|+|V|-2}a_k\binom{s+|E|+|V|-2-k}{|E|+|V|-2}\right)x^s.
\]
By the definition of $I'$, the degree of the interior polynomial $I'_G(x)$ is at most $|E|+|V|-k(G)-1$. This is at most $|E|+|V|-2$. Letting $I'_G(x)=b_0x^0+b_1x^1+\cdots+b_{|E|+|V|-2}x^{|E|+|V|-2}$, the left hand side of the above is
\begin{eqnarray*}
\frac{I'_G(x)}{(1-x)^{|E|+|V|-1}}
&=&\left( \sum_{s\in\mathbb{Z}_{\ge0}}\binom{s+|E|+|V|-2}{|E|+|V|-2} x^s \right) (b_0x^0+b_1x^1+\cdots+b_{|E|+|V|-2}x^{|E|+|V|-2})\\
&=&\sum_{s\in\mathbb{Z}_{\ge0}}\left(\sum_{k=0}^{|E|+|V|-2} b_k\binom{s+|E|+|V|-2-k}{|E|+|V|-2}\right)x^s.
\end{eqnarray*}
Since the binomial coefficients $\left\{ \binom{s+|E|+|V|-2-k}{|E|+|V|-2} \right\}_{k=0}^{|E|+|V|-2}$ form a basis of the space {\boldmath$P$}$_{|E|+|V|-2}$, we have $a_k=b_k$ for any $k=0,\cdots,|E|+|V|-2$. Hence we get the statement of the corollary.
\end{proof}

\subsection{A signed version}
The Ehrhart polynomial and the Ehrhart series of a signed bipartite graph is defined in the same way as the interior polynomial.
\begin{definition}
Let $G=(V,E,\mathcal{E})$ be a signed bipartite graph.
We define the signed Ehrhart polynomial as follows.
\[
\varepsilon^{+}_{G}\left(x \right)=\sum_{\mathcal{S} \subseteq \mathcal{E}_{-}(G)}(-1)^{|\mathcal{S}|}\varepsilon_{Q_{G \setminus \mathcal{S}}}\left(x \right).
\]
We define the signed Ehrhart series by the following.
\[
\ehr^+_{G}(x)=\sum_{\mathcal{S} \subseteq \mathcal{E}_{-}(G)}(-1)^{|\mathcal{S}|}\ehr_{Q_{G \setminus \mathcal{S}}}\left(x \right).
\]
In both formulae, the graph $G\setminus\mathcal{S}$ is treated as unsigned.
\end{definition}
We prove that the signed interior polynomial is equivalent to the signed Ehrhart polynomial using Corollary \ref{thm:disintehr2}.
\begin{theorem}\label{thm:signehrint}
Let $\displaystyle \varepsilon^+_{G}(s)=\sum_{k=0}^{|E|+|V|-2}a_k\binom{s+|E|+|V|-2-k}{|E|+|V|-2}$ be the signed Ehrhart polynomial of the signed bipartite graph $G=(V,E,\mathcal{E})$ with at least one positive and one negative edge.
Then
\[
I^+_G(x)=a_0x^0+a_1x^1+\cdots+a_{|E|+|V|-2}x^{|E|+|V|-2}.
\]
\end{theorem}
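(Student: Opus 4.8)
The plan is to reduce the statement to Corollary \ref{thm:disintehr2} applied summand by summand, the only real content being the observation that all of the graphs $G\setminus\mathcal{S}$ share a single binomial basis. Throughout, write $d=|E|+|V|-2$.

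First I would observe that deleting edges never removes a vertex, so every graph $G\setminus\mathcal{S}$ with $\mathcal{S}\subseteq\mathcal{E}_-(G)$ has the same vertex set $E\cup V$ as $G$ itself. Consequently the root polytopes $Q_{G\setminus\mathcal{S}}$ all sit in the same ambient space $\mathbb{R}^E\oplus\mathbb{R}^V$, and each of their Ehrhart polynomials has degree at most $d$. Hence every $\varepsilon_{Q_{G\setminus\mathcal{S}}}(s)$ can be written uniquely in the one common basis $\left\{\binom{s+d-k}{d}\right\}_{k=0}^{d}$ of the space {\boldmath$P$}$_{d}$; say $\varepsilon_{Q_{G\setminus\mathcal{S}}}(s)=\sum_{k=0}^{d}a_k^{\mathcal{S}}\binom{s+d-k}{d}$. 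Since $G$ has at least one positive edge, the graph $G\setminus\mathcal{S}$ retains that edge for every $\mathcal{S}\subseteq\mathcal{E}_-(G)$, so each $G\setminus\mathcal{S}$ has at least one edge and Corollary \ref{thm:disintehr2} applies termwise, giving $I'_{G\setminus\mathcal{S}}(x)=\sum_{k=0}^{d}a_k^{\mathcal{S}}x^k$ with exactly the same coefficients $a_k^{\mathcal{S}}$.

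Finally I would take the signed alternating sum $\sum_{\mathcal{S}\subseteq\mathcal{E}_-(G)}(-1)^{|\mathcal{S}|}(\,\cdot\,)$ of these two identities and interchange the two finite summations over $\mathcal{S}$ and $k$. Because the binomial basis is identical in every summand, this yields $\varepsilon^+_G(s)=\sum_{k=0}^{d}a_k\binom{s+d-k}{d}$ and $I^+_G(x)=\sum_{k=0}^{d}a_k x^k$ with the same coefficients $a_k=\sum_{\mathcal{S}}(-1)^{|\mathcal{S}|}a_k^{\mathcal{S}}$, which is precisely the assertion of the theorem.

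I do not expect a serious obstacle: the argument is linear in the summands and the hypotheses were arranged so that Corollary \ref{thm:disintehr2} applies to each term. The one point that genuinely needs care — and the only place the hypothesis on edges enters — is the common-basis observation. It is essential that the vertex count, and hence $d$, does not change as negative edges are deleted; otherwise the binomial coefficients appearing in different summands would not match, and the two alternating sums could not be compared coefficient by coefficient.
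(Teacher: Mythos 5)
Your proposal is correct and follows essentially the same route as the paper: expand each $\varepsilon_{Q_{G\setminus\mathcal{S}}}$ in the common binomial basis, apply Corollary \ref{thm:disintehr2} termwise (the paper likewise notes that each $G\setminus\mathcal{S}$ retains at least one edge), and take the alternating sum, interchanging the summations over $\mathcal{S}$ and $k$. Your explicit remark that the vertex set, and hence the degree bound $d$, is unchanged by edge deletion makes the common-basis step slightly more careful than the paper's, but the argument is the same.
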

\begin{proof}
Let
\[
\varepsilon_{Q_{G \setminus \mathcal{S}}}\left(x \right)=\sum_{k=0}^{|E|+|V|-2}a^{G\setminus\mathcal{S}}_k\binom{s+|E|+|V|-2-k}{|E|+|V|-2}.
\]
By definition, we have
\begin{eqnarray*}
\varepsilon^+_{G}(s)
&=&\sum_{\mathcal{S} \subseteq \mathcal{E}_{-}(G)}(-1)^{|\mathcal{S}|}\sum_{k=0}^{|E|+|V|-2}a^{G\setminus\mathcal{S}}_k\binom{s+|E|+|V|-2-k}{|E|+|V|-2}\\
&=&\sum_{k=0}^{|E|+|V|-2}\left(\sum_{\mathcal{S} \subseteq \mathcal{E}_{-}(G)}(-1)^{|\mathcal{S}|}a^{G\setminus\mathcal{S}}_k\right)\binom{s+|E|+|V|-2-k}{|E|+|V|-2}.
\end{eqnarray*}
On the other hand, since the unsigned bipartite graph $G\setminus\mathcal{S}$ has at least one edge, by Corollary \ref{thm:disintehr2}, we have
\[
I'_{G\setminus\mathcal{S}}(x)=a^{G\setminus\mathcal{S}}_0x^0+a^{G\setminus\mathcal{S}}_1x^1+\cdots+a^{G\setminus\mathcal{S}}_{|E|+|V|-2}x^{|E|+|V|-2}. 
\]
By the definition of the signed interior polynomial, we have
\begin{eqnarray*}
I^+_G(x)
&=&\sum_{\mathcal{S} \subseteq \mathcal{E}_{-}(G)}(-1)^{|\mathcal{S}|}\sum_{k=0}^{|E|+|V|-2}a^{G\setminus\mathcal{S}}_kx^k \\
&=&\sum_{k=0}^{|E|+|V|-2}\sum_{\mathcal{S} \subseteq \mathcal{E}_{-}(G)}(-1)^{|\mathcal{S}|}a^{G\setminus\mathcal{S}}_kx^k .
\end{eqnarray*}
Therefore we obtain the statement of the theorem.
\end{proof}
We prove that the signed interior polynomial is equivalent to the signed Ehrhart series using Theorem \ref{thm:disserint}. Moreover we can apply following theorem for more general signed bipartite graphs. 
\begin{theorem}\label{thm:signserint}
Let $G=(V,E,\mathcal{E})$ be a signed bipartite graph and $I^+_G(x)$ be the signed interior polynomial of $G$. Then
\[
\frac{I^+_G(x)}{(1-x)^{|E|+|V|-1}}=\ehr^+_{G}(x).
\]
\end{theorem}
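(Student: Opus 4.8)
The plan is to reduce everything to the disconnected, unsigned identity of Theorem \ref{thm:disserint} and then to exploit the fact that edge deletion never removes a vertex. The whole content of the statement is already packaged in Theorem \ref{thm:disserint}; what remains is a linearity argument, and the only thing that needs checking is that the power of $(1-x)$ appearing there does not vary with the deleted edge set.

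First I would observe that for every $\mathcal{S}\subseteq\mathcal{E}_-(G)$ the graph $G\setminus\mathcal{S}$ is obtained from $G$ purely by erasing some edges and forgetting the signs of the rest; its color classes are still $E$ and $V$. Consequently $|E|+|V|$ is the \emph{same} integer for $G$ and for every $G\setminus\mathcal{S}$, so the exponent $|E|+|V|-1$ occurring in Theorem \ref{thm:disserint} is independent of $\mathcal{S}$. Next, since each $G\setminus\mathcal{S}$ is a (possibly disconnected) unsigned bipartite graph, Theorem \ref{thm:disserint} applies verbatim to it and yields
\[
\frac{I'_{G\setminus\mathcal{S}}(x)}{(1-x)^{|E|+|V|-1}}=\ehr_{Q_{G\setminus\mathcal{S}}}(x).
\]

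I would then divide the defining formula for $I^+_G$ by the common factor $(1-x)^{|E|+|V|-1}$ and push the division inside the alternating sum, which is legitimate precisely because the exponent does not depend on $\mathcal{S}$:
\[
\frac{I^+_G(x)}{(1-x)^{|E|+|V|-1}}
=\sum_{\mathcal{S}\subseteq\mathcal{E}_-(G)}(-1)^{|\mathcal{S}|}\frac{I'_{G\setminus\mathcal{S}}(x)}{(1-x)^{|E|+|V|-1}}
=\sum_{\mathcal{S}\subseteq\mathcal{E}_-(G)}(-1)^{|\mathcal{S}|}\ehr_{Q_{G\setminus\mathcal{S}}}(x),
\]
and the right-hand sum is exactly $\ehr^+_G(x)$ by definition.

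There is essentially no obstacle to overcome here: the substance is entirely in Theorem \ref{thm:disserint}, and the proof is a one-line manipulation once the invariance of $|E|+|V|$ is noted. The single point that truly matters is that shared exponent $|E|+|V|-1$, since it is what allows the alternating sums of the $I'_{G\setminus\mathcal{S}}$ and of the $\ehr_{Q_{G\setminus\mathcal{S}}}$ to be matched term by term; had the exponent depended on $\mathcal{S}$ the factoring would fail. The only degenerate situation worth a remark is $\mathcal{E}_+(G)=\emptyset$ together with $\mathcal{S}=\mathcal{E}_-(G)$, where $G\setminus\mathcal{S}$ has no edges at all; but Theorem \ref{thm:disserint} is stated for arbitrary (possibly disconnected, possibly edgeless) bipartite graphs and both sides there equal $1$ after dividing by $(1-x)^{|E|+|V|-1}$, so this term requires no special treatment.
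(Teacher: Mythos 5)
Your proof is correct and is essentially identical to the paper's: both apply Theorem \ref{thm:disserint} to each $G\setminus\mathcal{S}$ and factor the common denominator $(1-x)^{|E|+|V|-1}$ out of the alternating sum, the key observation in each case being that deleting edges does not change the vertex classes $E$ and $V$. Your extra remark on the edgeless case is a harmless (and correct) addition that the paper leaves implicit.
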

\begin{proof}
By definition and Theorem \ref{thm:disserint},
\begin{eqnarray*}
\ehr^+_{G}(x)
&=&\sum_{\mathcal{S} \subseteq \mathcal{E}_{-}(G)}(-1)^{|\mathcal{S}|}\ehr_{Q_{G \setminus \mathcal{S}}}\left(x \right)\\
&=&\sum_{\mathcal{S} \subseteq \mathcal{E}_{-}(G)}(-1)^{|\mathcal{S}|}\frac{I'_{G\setminus \mathcal{S}}(x)}{(1-x)^{|E|+|V|-1}}\\
&=&\frac{I^+_{G\setminus \mathcal{S}}(x)}{(1-x)^{|E|+|V|-1}}.
\end{eqnarray*}
This completes the proof.
\end{proof}

\subsection{An alternating cycle and a vanishing formula}\label{sec:alt}
An alternating cycle is a cycle in a bipartite graph containing positive edges and negative edges alternately like in Figure \ref{fig:altcycle}.
\begin{figure}[htbp]
\begin{center}
\includegraphics[width=3cm]{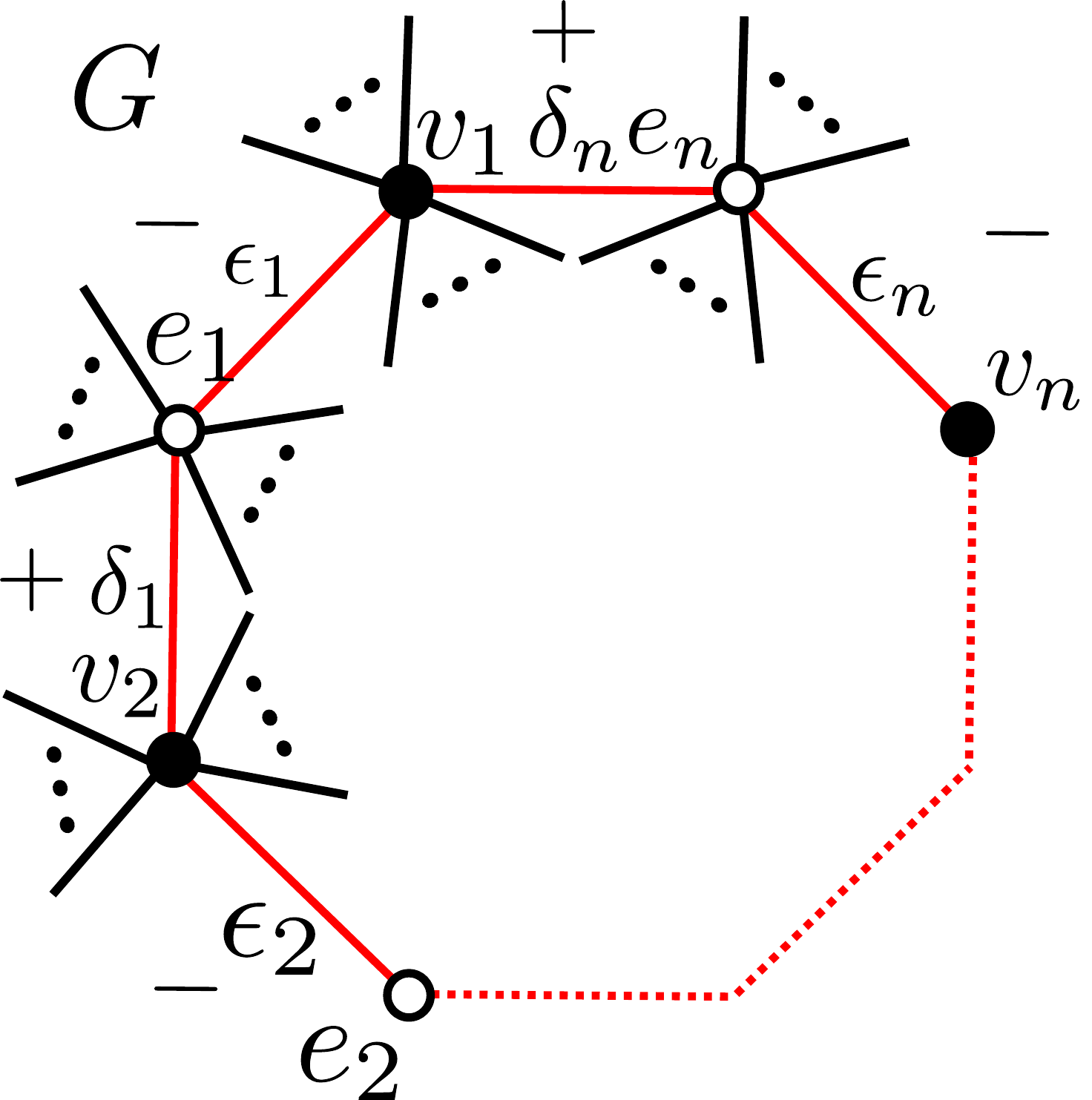}
\caption{An alternating cycle.}
\label{fig:altcycle}
\end{center}
\end{figure}

\begin{proof}[Proof of Theorem \ref{thm:0}]
It is sufficient to show that $I^+_G(x)=0$ in case the signed bipartite graph $G=(V,E,\mathcal{E})$ does not have negative edges except along the alternating cycle. If $G$ has a negative edge $\epsilon$ not on for the alternating cycle, we use the formula that $I^+_G(x)=I^+_{G+\epsilon}(x)-I^+_{G \setminus \epsilon}(x)$ (see Lemma \ref{lem:sumint}) recursively.

We denote by $v_1,e_1,v_2,e_2,\ldots ,v_n,e_n$ the vertices in the alternating cycle in $G$. Here $\epsilon_1=e_1v_1,\epsilon_2=e_2v_2,\ldots ,\epsilon_n=e_nv_n$ denote the negative edges and $\delta_{n+1}=e_1v_2,\delta_{n+2}=e_1v_2,\ldots ,\delta_{2n}=e_nv_1$ denote the positive edges in the alternating cycle. The other edges of $G$ are denoted by $\gamma_{2n+1},\ldots,\gamma_{|\mathcal{E}|}$, where $|\mathcal{E}|$ is the number of edges in $G$. The sums of standard generators $\bm{\epsilon}_1={\bf e}_1+{\bf v}_1,\bm{\epsilon}_2={\bf e}_2+{\bf v}_2,\ldots ,\bm{\epsilon}_n={\bf e}_n+{\bf v}_n,\bm{\delta}_{n+1}={\bf e}_1+{\bf v}_2,\bm{\delta}
_{n+2}={\bf e}_1+{\bf v}_2,\cdots,\bm{\delta}_{2n}={\bf e}_n+{\bf v}_1,\bm{\gamma}
_{2n+1},\ldots,\bm{\gamma}_{|\mathcal{E}|}$ are the point in $\mathbb{R}^{E}\oplus\mathbb{R}^{V}$ corresponding to the edges in $G$. Let $G_{\{i_1,\cdots,i_k\}}$ denote the bipartite graph obtained from $G$ by deleting the edges $\epsilon_{i_1},\cdots,\epsilon_{i_k}$ and forgetting sign. Then by definition,
\[
I^+_G(x)=\sum_{S \subset \{1,2,\cdots,n\}}(-1)^{|S|}I_{G_S}(x).
\]
Let $Q_{\{i_1,\cdots,i_k\}}$ denote the root polytope of the bipartite graph $G_{\{i_1,\cdots,i_k\}}$. So we have 
\[Q_\emptyset=\conv \{\bm{\epsilon}_1,\bm{\epsilon}_2,\cdots ,\bm{\epsilon}_n,\bm{\delta}_{n+1},\bm{\delta}
_{n+2},\cdots,\bm{\delta}_{2n},\bm{\gamma}
_{2n+1},\cdots,\bm{\gamma}_{|\mathcal{E}|}\},
\]
where $Q_\emptyset$ is the root polytope of the bipartite graph $G\setminus\emptyset$, which is the unsigned bipartite graph obtained from $G$ by forgetting signs.

We first show that $\displaystyle\bigcup_{k=1}^nQ_{\{k\}}=Q_\emptyset$. It is clear that $Q_{\{k\}}\subset Q_\emptyset$ for any $k$. So we have $\displaystyle\bigcup_{k=1}^nQ_{\{k\}}\subset Q_\emptyset$. Next we show that $\displaystyle\bigcup_{k=1}^nQ_{\{k\}}\supset Q_\emptyset$. We take ${\bf x} \in Q_\emptyset \subset \mathbb{R}^{E}\oplus \mathbb{R}^{V}$. We regard ${\bf x}$ as the weight system $\mathcal{W}$. We will denote by $\lambda_1,\lambda_2, \cdots, \lambda_n $ the weights of the edges $\epsilon_1, \epsilon_2, \cdots ,\epsilon_n$ in $\mathcal{W}$. Moreover, like in Example \ref{ex:cycchange}, we take $\lambda=\min\{\lambda_1,\lambda_2, \cdots, \lambda_n\}$ and apply cycle change. Thus, we get the weight system $\mathcal{W}'$ representing ${\bf x}$ so that there is an edge $\epsilon_k$ of which the weight in $\mathcal{W}'$ is $0$. Then we may regard the weight system as representing a point in the root polytope of the bipartite graph $G_{\{k\}}$. Because we found a $k$ such that ${\bf x} \in Q_{G_{\{k\}}}$, we have $\displaystyle\bigcup_{k=1}^nQ_{\{k\}}\supset Q_\emptyset$. Therefore $\displaystyle\bigcup_{k=1}^nQ_{\{k\}}=Q_\emptyset$.

For any set $A \subset \mathbb{R}^{n}$, let the function $[A]:\mathbb{R}^{n} \to \mathbb{R}$ be defined by 
\begin{equation*}
[A](x) = \begin{cases}
           1 & (x \in A) \\
           0 & (x \notin A)
         \end{cases}.
\end{equation*}
We call this function an indicator function.
Let $\Delta \subset \mathbb{R}^{|\mathcal{E}|}$ be the standard simplex
\[
\Delta = \left\{ (\lambda_1, \cdots, \lambda_{|\mathcal{E}|}) \relmiddle| \sum_{i=1}^{|\mathcal{E}|}\lambda_i=1 \mbox{ and }\lambda_i \ge 0 \mbox{ for }i=0,\cdots,|\mathcal{E}| \right\}.
\]
For $j=1,\cdots,n$, let $H_j \subset \Delta$ be defined by
\[
H_j = \left\{ (\lambda_1, \cdots, \lambda_{|\mathcal{E}|}) \relmiddle| \sum_{i=1}^{|\mathcal{E}|}\lambda_i=1 \mbox{ and }\lambda_i \ge 0 \mbox{ for }i=0,\cdots,|\mathcal{E}| \mbox{ and }\lambda_j=0 \right\}.
\]
Using the inclusion-exclusion formula, we write
\[
\left[ \bigcup_{j=1}^{n}H_j \right]=\sum_{\emptyset \neq J\subset \{1,\cdots,n\}}(-1)^{|J|-1}\left[H_J\right],\mbox{ where }H_J=\bigcap_{j \in J}H_j.
\]
Now we consider a linear transformation $T:\mathbb{R}^{|\mathcal{E}|}\to \mathbb{R}^{|E|+|V|}$ defined by 
\[
T(\lambda_1, \cdots, \lambda_{|\mathcal{E}|})=\sum_{i=1}^{n} \lambda_i\bm{\epsilon}_i+\sum_{i=n+1}^{2n}\lambda_i\bm{\delta}_i +\sum_{i=2n+1}^{|\mathcal{E}|}\lambda_i\bm{\gamma}_i.
\]

Then $T(\Delta)=Q_G$. Since linear transformations preserve linear dependencies among the indicator functions of polyhedra (see \cite[Theorem 3.1]{B}), applying $T$ to the inclusion-exclusion formula, we obtain
\[
\left[ T\left(\bigcup_{j=1}^{n}H_j\right) \right]=\sum_{\emptyset \neq J\subset \{1,\cdots,n\}}(-1)^{|J|-1}\left[T(H_J)\right].
\]
By the definition of $Q_J$, we have $\left[T(H_J)\right]=Q_J$ for all $J\subset\{1,\cdots n\}$. On the other hand, from what has already been proved, we have
\[
\left[ T\left(\bigcup_{j=1}^{n}H_j\right) \right]=\left[ \bigcup_{j=1}^{n}T(H_j) \right]=\left[ \bigcup_{j=1}^{n}Q_{\{j\}}\right]=[Q_{\emptyset}].
\]
For these reasons, we get
\begin{gather*}
[Q_{\emptyset}]=\sum_{\emptyset \neq J\subset \{1,\cdots,n\}}(-1)^{|J|-1}\left[Q_J\right],\mbox{ in other words }
\sum_{J\subset \{1,\cdots,n\}}(-1)^{|J|}\left[Q_J\right]=0.
\end{gather*}
By the definition of the Ehrhart polynomial, for any $s\in\mathbb{N}$, we have
\[
\sum_{J\subset \{1,\cdots,n\}}(-1)^{|J|}|(s\cdot Q_J) \cap (\mathbb{Z}^{E}\oplus\mathbb{Z}^{V})|=0.
\]
By Theorem \ref{thm:signserint},
\begin{eqnarray*}
\frac{I^+_G(x)}{(1-x)^{|E|+|V|-1}}
&=&\ehr^+_{G}(x)\\
&=&\sum_{\mathcal{S} \subseteq \mathcal{E}_{-}(G)}(-1)^{|\mathcal{S}|}\ehr_{Q_{G \setminus \mathcal{S}}}(x)\\
&=&\sum_{J\subset \{1,\cdots,n\}}(-1)^{|J|}\ehr_{Q_{J}}(x)\\
&=&\sum_{J\subset \{1,\cdots,n\}}(-1)^{|J|}\left(1+\sum_{s\in \mathbb{N}}\varepsilon_{Q_J}(s) x^s\right)\\
&=&\sum_{J\subset \{1,\cdots,n\}}(-1)^{|J|}\left(1+\sum_{s\in \mathbb{N}}|(s\cdot Q_J) \cap (\mathbb{Z}^{E}\oplus\mathbb{Z}^{V})| x^s\right)\\
&=&\sum_{J\subset \{1,\cdots,n\}}(-1)^{|J|}+\sum_{s\in \mathbb{N}}\left(\sum_{J\subset \{1,\cdots,n\}}(-1)^{|J|}|(s\cdot Q_J) \cap (\mathbb{Z}^{E}\oplus\mathbb{Z}^{V})|\right) x^s\\
&=&0.
\end{eqnarray*}
Therefore, we have $I^+_G=0$.
\end{proof}

\subsection{A recusion formula}\label{sec:rec}
In section \ref{sec:rec} and \ref{sec:morton}, we discuss applications of Theorem \ref{thm:0}. Theorem \ref{thm:0} gives the identity for the original interior polynomial stated as Corollary \ref{cor:recursion}.
\begin{proof}[Proof of Corollary \ref{cor:recursion}]
We assume that the unsigned bipartite graph $G$ is not a tree. We find the cycle in $G$ and label the edges of the cycle $\epsilon_1,\delta_1,\epsilon_2,\delta_2,\cdots,\epsilon_n,\delta_n$. We regard the signs of $\epsilon_1,\epsilon_2,\cdots,\epsilon_n$ as negative and the signs of the other edges as positive. Then we construct the signed bipartite graph $G'$ containing the alternating cycle and the signed interior polynomial of $G'$ is $0$. By the definition of the signed interior polynomial, we have
\[
\sum_{\mathcal{S}\subset\{\epsilon_1,\epsilon_2,\cdots,\epsilon_n\}}(-1)^{|\mathcal{S}|}I'_{G'\setminus\mathcal{S}}(x)=0.
\]
The unsigned bipartite graph $G'\setminus\mathcal{S}$ is the bipartite graph $G\setminus \mathcal{S}$, which is obtained from $G$ by deleting edges in $\mathcal{S}$. Moreover, if $\mathcal{S}=\emptyset$, the unsigned bipartite graph $G'\setminus\mathcal{S}$ is the original unsigned bipartite graph $G$. We have
\[
I'_G(x)=\sum_{\emptyset\neq\mathcal{S}\subset\{\epsilon_1,\epsilon_2,\cdots,\epsilon_n\}}(-1)^{|\mathcal{S}|-1}I'_{G\setminus\mathcal{S}}(x).
\]
This completes the proof.
\end{proof}
This identity says that the original interior polynomial $I_G(x)$ of the bipartite graph $G$ is obtained from the interior polynomials of bipartite graphs that result from deleting some edges in a cycle. This is one possible counterpart of the deletion-contraction relation of the Tutte polynomial. Moreover, in the bipartite graph $G\setminus\mathcal{S}$, which is obtained from $G$ by deleting some edges $\mathcal{S}\neq\emptyset$ in the cycle, at least one cycle of $G$ disappears. Thus we may apply the identity repeatedly and get the original interior polynomial by the interior polynomials of some spanning forests. The interior polynomial of a forest $F$ which has $c(F)$ components is $(1-x)^{c(F)-1}$. Therefore we get the interior polynomial without computing activities as in the original definition.

\begin{example}
Let $G$ be the bipartite graph $K_{23}$ shown in Figure \ref{k23}. We take edges in a cycle in $G$, and delete some of the edges. We repeat the operation until the graph becomes a forest, as in Figure \ref{fig:computation}. We compute the interior polynomials of trees with suitable signs. We sum these polynomials, and obtain $I_G=-1+4-2(1-x)=1+2x$.

\begin{figure}[htbp]
\includegraphics[width=8.5cm]{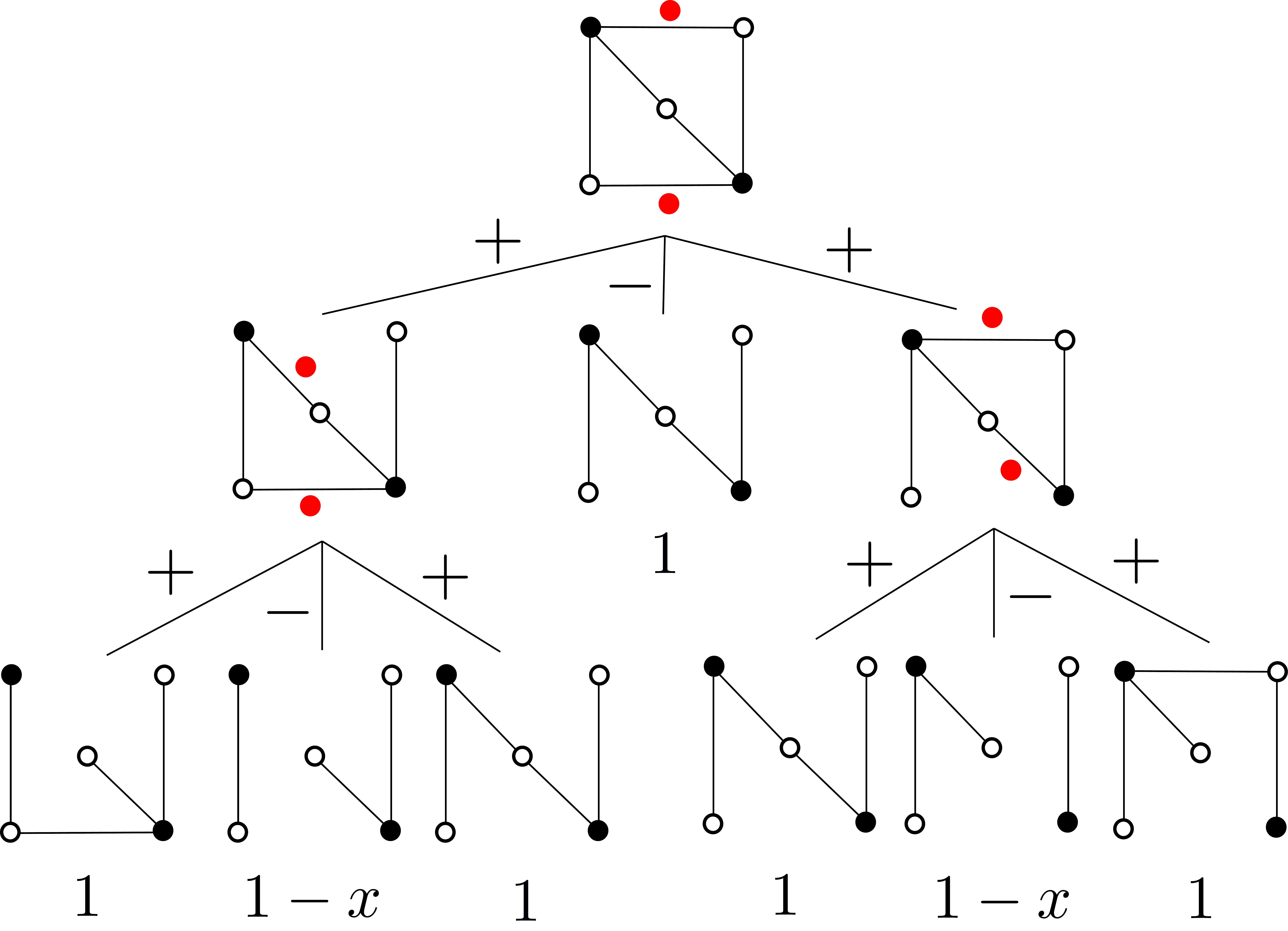}
\caption{A computation tree of $G$.}\label{fig:computation}
\end{figure}
\end{example}

\subsection{The vanishing formula and Morton's inequality}\label{sec:morton}
For any oriented link diagram $D$, We get the Seifert graph $G=(V,E,\mathcal{E})$ of $D$, which is signed bipartite graph. Morton's inequality implies that the maximal $z$-exponent of the HOMFLY polynomial $P_{D}$ is less than or equal to $|\mathcal{E}|-(|E|+|V|)+1$. If the maximal $z$-exponent of the HOMFLY polynomial $P_{D}(v,z)$ is less than $|\mathcal{E}|-(|E|+|V|)+1$, we say that Morton's inequality is not sharp. Theorem \ref{thm:any} implies that the part with $z$-exponent $|\mathcal{E}|-(|E|+|V|)+1$ of the HOMFLY polynomial $P_{D}(v,z)$ is obtained from the signed interior polynomial $I^+_G(x)$. So when oriented link diagram has the Seifert graph $G$ with $I^+_G(x)=0$, Morton's inequality is not sharp. Therefore we get the following corollary by Theorem \ref{thm:0}. 

\begin{cor}\label{cor:sharp}
If the Seifert graph of an oriented link diagram contains an alternating cycle, then Morton's inequality is not sharp.
\end{cor}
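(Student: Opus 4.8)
The plan is to deduce the Corollary as the immediate composite of the two main structural results already established, namely Theorem \ref{thm:any} and Theorem \ref{thm:0}; almost all of the work has been front-loaded into those theorems, so the argument here is one of bookkeeping and correct interpretation rather than new computation.

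First I would fix notation and recall the dictionary between the diagram and its Seifert graph. Let $D$ be the given oriented link diagram and let $G=(V,E,\mathcal{E}_+\cup\mathcal{E}_-)$ be its Seifert graph, a signed bipartite graph whose edges inherit the signs of the crossings of $D$. Under this correspondence each crossing is one edge and each Seifert circle is one vertex, so the crossing number is $c(D)=|\mathcal{E}|$ and the Seifert circle count is $s(D)=|E|+|V|$. Thus Morton's inequality reads $\max\deg_z P_D(v,z)\le |\mathcal{E}|-(|E|+|V|)+1$, and the phrase ``not sharp'' means precisely that this inequality is strict, i.e.\ the coefficient of $z^{|\mathcal{E}|-(|E|+|V|)+1}$ in $P_D(v,z)$—which is by definition $\T_D(v)$—vanishes identically.

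Next I would apply the hypothesis and Theorem \ref{thm:0}. Since $G$ contains an alternating cycle of positive and negative edges, Theorem \ref{thm:0} yields $I^+_G(x)=0$. I then feed this into Theorem \ref{thm:any}, which identifies the top of the HOMFLY polynomial as
\[
\T_D(v)=v^{\,|\mathcal{E}_+|-|\mathcal{E}_-|-(|E|+|V|)+1}\,I^+_G(v^2).
\]
Because the factor $I^+_G(v^2)$ is the zero polynomial, the entire right-hand side vanishes, so $\T_D(v)=0$. As $\T_D(v)$ is exactly the coefficient of $z^{|\mathcal{E}|-(|E|+|V|)+1}$ in $P_D(v,z)$, this coefficient is zero, and therefore the largest $z$-exponent actually occurring in $P_D(v,z)$ is strictly smaller than the Morton bound $|\mathcal{E}|-(|E|+|V|)+1$. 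This is the assertion that Morton's inequality is not sharp.

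I do not anticipate a genuine obstacle, since the two invoked theorems do the heavy lifting; the only points requiring care are the alignment of the three facts—the dictionary $c(D)=|\mathcal{E}|$ and $s(D)=|E|+|V|$, the identification of $\T_D(v)$ with the coefficient of the top $z$-power, and the definition of ``not sharp.'' The one thing worth checking explicitly is that the vanishing $\T_D(v)=0$ genuinely lowers the maximal $z$-degree rather than rendering it ill-defined, that is, that $P_D(v,z)$ is itself nonzero so that ``maximal $z$-exponent'' has content; this is automatic because the HOMFLY polynomial is normalized to equal $1$ on the unknot and is nonzero for every link, so the diagram $D$ has some well-defined top $z$-exponent, which the argument shows must be strictly below Morton's bound.
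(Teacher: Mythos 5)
Your proof is correct and follows exactly the paper's route: it combines Theorem \ref{thm:0} (the alternating cycle forces $I^+_G=0$) with Theorem \ref{thm:any} (the top coefficient $\T_D(v)$ equals $v^{|\mathcal{E}_+|-|\mathcal{E}_-|-(|E|+|V|)+1}I^+_G(v^2)$) to conclude that the coefficient of $z^{c(D)-s(D)+1}$ vanishes. The extra care you take with the dictionary $c(D)=|\mathcal{E}|$, $s(D)=|E|+|V|$ and with the nonvanishing of $P_D$ is sound but not a departure from the paper's argument.
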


When the Seifert graph consists of blocks in which the signs are the same, the link diagram with this Seifert graph is a homogeneous diagram. In \cite{C}, Cromwell showed that, for any homogeneous diagram, Morton's inequality is sharp. Of course, homogeneous diagrams have no alternating cycles. So we can expect that if Morton's inequality is sharp, the Seifert graph of the oriented link diagram has an alternating cycle, but this is wrong.

\begin{example}
The following minimal, special diagram $D$ of $15_{100154}$ (see \cite{diagram}) is an example which shows that the maximal $z$-exponent of the HOMFLY polynomial dose not have to agree with $c(D)-s(D)+1=15-8+1=8$, even though the graph has no alternating cycles.
\[
\begin{array}{llllll}
P_{15_{100154}}(v,z)=
&                &+6v^{-2}z^6&              &+\,\,\,v^2z^6&              \\
&+\,\,\,v^{-4}z^4&-5v^{-2}z^4&+\,\,\,6v^0z^4&+     4v^2z^4&-\,\,\,v^4z^4 \\
&+     2v^{-4}z^2&-9v^{-2}z^2&+     10v^0z^2&+     4v^2z^2&-     3v^4z^2 \\
&+\,\,\,v^{-4}z^0&-5v^{-2}z^0&+\,\,\,6v^0z^0&+      v^2z^0&-     2v^4z^0. 
\end{array}
\]

\begin{figure}[htbp]
\begin{tabular}{cc}\hspace{-20pt}
\begin{minipage}{0.5\hsize}
\centering
\includegraphics[width=4.5cm]{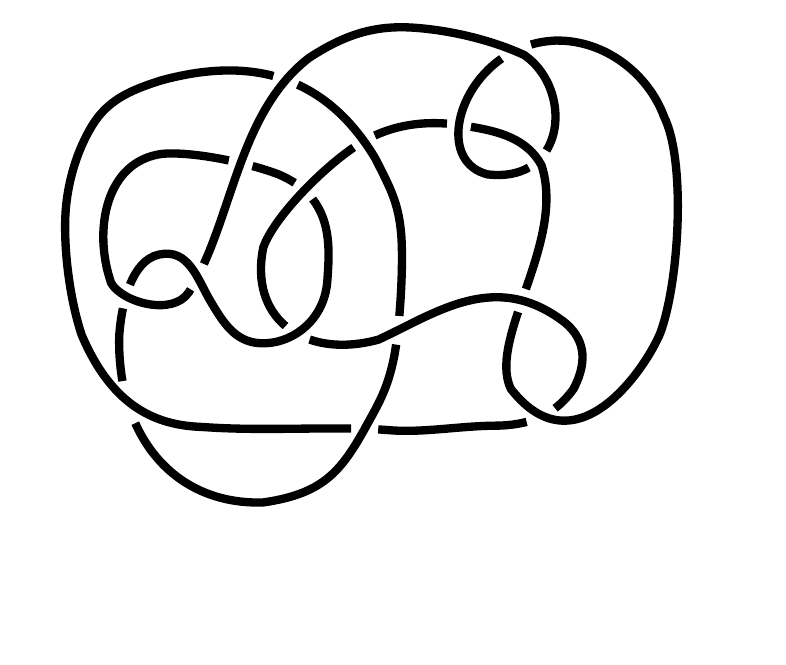}
\caption{A diagram of $15_{100154}$.}

\end{minipage}
\begin{minipage}{0.6\hsize}
\centering
\includegraphics[width=4.5cm]{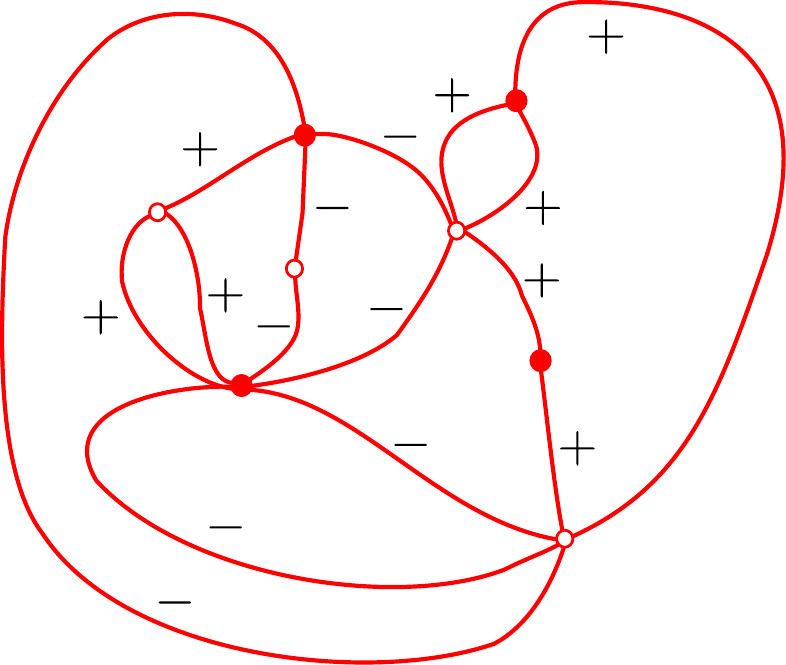}
\caption{The Seifert graph of $15_{100154}$.}

\end{minipage}
\end{tabular}
\end{figure}
\end{example}

\end{document}